\documentclass{amsart}

\usepackage{amsrefs}
\usepackage{euscript}
\usepackage{amsmath}
\usepackage{amssymb}
\usepackage{enumerate}

\usepackage{amsfonts}
\usepackage{graphicx}
\usepackage[all]{xy}

\numberwithin{equation}{section}

\newcommand{\bC}{{\mathbb C}}
\newcommand{\bP}{{\mathbb P}}
\newcommand{\bR}{{\mathbb R}}
\newcommand{\bZ}{{\mathbb Z}}

\newcommand{\cA}{\mathcal A}
\newcommand{\cB}{\mathcal B}
\newcommand{\cY}{\mathcal Y}
\newcommand{\cP}{\mathcal P}
\newcommand{\cL}{\mathcal L}
\newcommand{\cR}{\mathcal R}
\newcommand{\cH}{\mathcal H}
\newcommand{\sT}{\EuScript T}

\newcommand{\vx}[1][\!]{\vec{x}^{\, #1}}
\newcommand{\vy}[1][\!]{\vec{y}^{\, #1}}

\newcommand{\sJ}{{\EuScript J}}
\newcommand{\sH}{{\EuScript H}}
\newcommand{\Orbit}{{\EuScript O}}
\newcommand{\Chord}{{\EuScript X}}

\newcommand{\bfD}{\mathbf{D}}
\newcommand{\bfk}{\mathbf{k}}

\newcommand{\id}{\operatorname{id}}
\newcommand{\Disct}{\tilde{\mathcal R}}
\newcommand{\Disc}{{\mathcal R}}
\newcommand{\Product}{{\mathcal P}}
\newcommand{\Discbar}{\overline{\Disc}}

\newcommand{\Cylt}{\tilde{\mathcal M}}
\newcommand{\Cyl}{{\mathcal M}}
\newcommand{\Cylbar}{\overline{\Cyl}}
\newcommand{\Ann}{{\mathcal C}}
\newcommand{\Annbar}{\overline{\Ann}}
\newcommand{\Moduli}{\mathcal M}
\newcommand{\OC}{\mathcal {OC}}
\newcommand{\CO}{\mathcal {CO}}
\newcommand{\Wrap}{\EuScript{W}}

\newcommand{\Hom}{\operatorname{Hom}}
\newcommand{\Laction}{\EuScript{A}}
\newcommand{\Ob}{\operatorname{Ob}}
\newcommand{\Univ}{\mathcal{U}}

\renewcommand{\dbar}{\overline{\partial}}
\def\co{\colon\thinspace}

\newtheorem{thm}{Theorem}[section]
\newtheorem{cor}[thm]{Corollary}
\newtheorem{lem}[thm]{Lemma}
\newtheorem{prop}[thm]{Proposition}
\newtheorem{defin}[thm]{Definition}

\theoremstyle{remark}
\newtheorem{rem}[thm]{Remark}

\newcommand{\superscript}[1]{\ensuremath{^{\textrm{#1}}} }

\renewcommand{\th}[0]{\superscript{th}}
\newcommand{\st}[0]{\superscript{st}}

\title[A geometric criterion for generating the Fukaya category]{A geometric criterion for generating the Fukaya category}

\author[M.~Abouzaid]{Mohammed Abouzaid} \date{September 29, 2010}
\thanks{This research was conducted during the period the author served as a Clay Research Fellow. }

\begin{document}
\begin{abstract}
Given a collection of exact Lagrangians in a Liouville manifold, we construct a map from the Hochschild homology of the Fukaya category that they generate to symplectic cohomology.   Whenever the identity in symplectic cohomology lies in the image of this map, we conclude that every Lagrangian lies in the idempotent closure of the chosen collection.  The main new ingredients are (1) the construction of   operations on the Fukaya category controlled by discs with two outputs, and (2) the Cardy relation.
\end{abstract}

\maketitle
\setcounter{tocdepth}{1}
\tableofcontents

\section{Introduction}
Let $M$ be a Liouville manifold, and let $\Wrap$ be a finite collection of objects of its wrapped Fukaya category.  The Lagrangians we allow are exact and  are  modelled after a Legendrian at infinity (see \cite{abouzaid-seidel} and Section \ref{sec:geom-prel}).   In this paper, we define a map
\begin{equation} \label{eq:open_to_closed} H^{*}( \OC)  \co HH_{*}(\Wrap, \Wrap) \to SH^{*}(M) , \end{equation}
where $SH^*(M)$ is symplectic cohomology, and $HH_{*}$ is Hochschild homology.  Symplectic cohomology admits a canonical class called the identity which is the image of the identity in the ordinary cohomology of $M$ under the natural map
\begin{equation*}H^{*}(M)  \to SH^{*}(M)  \end{equation*}
whose importance was stressed by Viterbo in \cite{viterbo}.
\begin{thm}[Generation Criterion] \label{thm:main_thm}
If the identity lies in the image of the composition
\begin{equation} HH_{*}(\cB, \cB) \to  HH_{*}(\Wrap, \Wrap) \to SH^{*}(M)  \end{equation}
for a full subcategory $\cB$, then the objects of $\cB$ split-generate $\Wrap$.
\end{thm}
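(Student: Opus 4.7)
The plan is to prove the criterion by constructing a closed-open map
\[
\CO \co SH^*(M) \longrightarrow HH^*(\Wrap,\Wrap)
\]
paired with $\OC$, and relating the composition $\CO \circ \OC$ to the new operations flagged in the abstract through a Cardy-type relation. For each $L \in \Ob \Wrap$, composing $\CO$ with evaluation at the pair $(L,L)$ yields an action $SH^*(M) \to HF^*(L,L)$. A standard degeneration argument on a disc with one interior positive puncture and one boundary marked point on $L$ shows that this action is unital, i.e.\ that $1_{SH}$ is sent to $\id_L$.

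The heart of the proof is a Cardy-type relation, at the level of cohomology, of the shape
\[
\CO\bigl(H^*(\OC)(\alpha)\bigr) \cdot \id_L \;=\; \mu_L(\alpha)
\]
for every Hochschild cycle $\alpha$ on $\cB$. Here $\mu_L$ is a geometrically defined operation, built from the discs with two outputs introduced in the paper, taking values in the subspace of $HF^*(L,L)$ consisting of morphisms that factor through objects of $\cB$. The relation is derived from a one-parameter family of Riemann surfaces with boundary (heuristically annuli carrying appropriate interior and boundary punctures) interpolating between two degenerations: pinching the neck produces the nodal surface underlying $\CO \circ \OC$ evaluated on $\id_L$, while the opposite degeneration is built out of the two-output discs and realises the geometric factorisation through $\cB$.

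The algebraic consequence is that $\mu_L(\alpha)$ lies in the image of the natural evaluation map
\[
\cY_L \otimes_{\cB} \cY_L^{\vee} \longrightarrow HF^*(L,L),
\]
where $\cY_L$ is the Yoneda module of $L$ over $\cB$ and $\cY_L^{\vee}$ an appropriate linear dual bimodule. A standard abstract criterion (a Mukai-pairing-type split-generation lemma) asserts that $\id_L$ lying in the image of this evaluation is equivalent to $L$ being a direct summand, in the triangulated closure, of a twisted complex over $\cB$. Combining the Cardy relation with the hypothesis $H^*(\OC)(\alpha) = 1_{SH}$ and the unit property of $\CO$, the left-hand side collapses to $\id_L$, so $L$ is split-generated by $\cB$; since $L \in \Ob \Wrap$ was arbitrary, this proves the theorem.

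The main technical obstacle will be the chain-level construction of $\mu_L$ and the verification of the Cardy relation. One must compactify the relevant moduli spaces of annuli and of discs with two outputs, choose coherent Floer data and wrapping perturbations compatible with the $A_\infty$ structures on $\Wrap$ and on $\cB$, and check that the boundary and corner strata assemble into the stated identity up to an explicit $A_\infty$ boundary. The two-output disc operations in particular must be shown to interact correctly with the Hochschild inputs so that the factorisation through $\cY_L$ becomes manifest; controlling the signs, orientations and gluing constants in this interaction is the technical core of the argument.
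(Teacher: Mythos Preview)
Your overall architecture is right and matches the paper: a closed--open map $\CO$, the unitality of $\CO$, a Cardy relation proved by degenerating a family of annuli, and the algebraic lemma that $\id_L$ in the image of an evaluation map forces $L$ to be a summand of a twisted complex over $\cB$.  Two points deserve correction or sharpening.

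First, the paper's $\CO$ lands directly in $HW^*(K,K)$ for each $K$, not in Hochschild cohomology $HH^*(\Wrap,\Wrap)$.  Your version is strictly stronger and is indeed available, but it is unnecessary here: only the evaluation at $K$ is used, and proving ring-homomorphism properties of the full $\CO$ would add work you don't need.  Relatedly, the ``$\cdot\,\id_L$'' in your Cardy relation is spurious once $\CO$ already takes values in $HW^*(L,L)$.

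Second, and more substantively, your invocation of a ``linear dual bimodule'' $\cY_L^{\vee}$ is a genuine gap in the wrapped setting.  Wrapped Floer groups are typically of infinite rank, so no honest linear dual exists, and one cannot recover the left Yoneda module from the right one by dualising.  The paper handles this by never dualising: the two-output discs are packaged not as a trace or pairing but as a degree-$n$ morphism of $\cB$-bimodules
\[
\Delta \co \cB \longrightarrow \cY^l_K \otimes \cY^r_K,
\]
a ``pre-dual'' of composition.  Passing to Hochschild homology gives $HH_*(\Delta)\co HH_*(\cB,\cB)\to H^*(\cY^r_K\otimes_\cB \cY^l_K)$, and the Cardy relation is the commutativity (up to a global sign) of the square with $\OC$, $\CO$, $HH_*(\Delta)$ and the composition map $\mu\co \cY^r_K\otimes_\cB \cY^l_K\to HW^*(K,K)$.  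Your $\mu_L$ should be identified with $H^*(\mu)\circ HH_*(\Delta)$, and the ``factorisation through $\cB$'' you want is exactly that $\id_K$ lies in the image of $H^*(\mu)$.  Once you rewrite the argument with $\cY^l_K$ and $\cY^r_K$ as the two separate Yoneda modules and $\Delta$ as the coproduct, no duality is invoked and your outline goes through as in the paper.
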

\begin{rem}
We present a proof of this result using the minimal amount of technology, both  from the point of view of homological algebra and holomorphic curves.  Assuming that the work  \cite{MWW} of M'au, Wehrheim, and Woodward  on holomorphic quilts extends to the wrapped setting (a reasonable, though by no means obvious assumption), one can alternatively prove first that the wrapped Fukaya category, under the assumptions of Theorem \ref{thm:main_thm}, is \emph{homologically smooth} in the sense of Kontsevich (see, e.g. Section 8.1 of \cite{KS}).  Namely, the theory of holomorphic quilts defines a functor from the Fukaya category of the product $M^{2}$ to the category of bimodules over the Fukaya category of $M$; the image of the geometric diagonal under this functor is the diagonal bimodule.  With the help of Appendix \ref{sec:univ-twist-compl}, one can conclude from the assumptions of Theorem \ref{thm:main_thm} that the geometric diagonal lies in the category split-generated by products of objects of $\cB$.  From that, the existence of a functor is sufficient to conclude that the diagonal bimodule over $\Wrap$ is a \emph{perfect bimodule}; i.e. that it lies in the category split-generated by tensor products of left and right Yoneda modules (see Appendix \ref{sec:univ-twist-compl} for details about these modules).  In fact, the construction shows that only objects lying in $\cB$ are required to produce the complex exhibiting the diagonal as a perfect bimodule.  From this one concludes that  the  objects of $\cB$ split-generate $\Wrap$ by a generalisation of Beilinson's argument \cite{beilinson} to the case where morphism spaces are allowed to have infinite rank (the existence of such a generalisation seems known to experts, though it does not appear in the literature in the desired form).
\end{rem}

Combining Theorem \ref{thm:main_thm} with recent work of Bourgeois, Ekholm and Eliahsberg announced in \cite{BEE} should imply that the unstable manifolds of a plurisubharmonic Morse function on a Stein manifold split-generate its wrapped Fukaya category, though there are still technical gaps in relating their symplectic field theory approach with the Floer equations we use.   In the special case of cotangent bundles, the interested reader can find some applications developed in the papers \cites{abouzaid-cotangent,abouzaid-htpy}. In a different direction, the exactness assumption can be dropped, and we expect upcoming joint work with Fukaya, Oh, Ohta, and Ono to address the case of compact (weakly unobstructed) Lagrangians.

For compact unobstructed Lagrangians, one may construct a map analogous to Equation \eqref{eq:open_to_closed} whose target is ordinary cohomology as explained in Section 13 of \cite{FOOO} (see also Conjecture 1 and 2 in \cite{costello} for a discussion from a more abstract topological field theory point of view).   However, because of the lack of Poincar\'e duality in wrapped Floer homology, one cannot directly import intuition from  topological field theory.  We bypass these difficulties by stating Theorem \ref{thm:main_thm} in the concrete manner given, rather than trying to appeal to a more theoretical approach.

To prove the generation criterion, we consider, for any object $K$ in $\Wrap$, the left and right $\Wrap$ modules  $ \cY^{l}_{K}$ and $\cY^{r}_{K}$  given by the Yoneda construction.  Passing to cohomology, these are the modules which respectively assign to a Lagrangian $L$ the wrapped Floer cohomology groups
\begin{equation*}
  HW^{*}(K,L) \textrm{ and }   HW^{*}(L,K)
\end{equation*}
 which are morphism spaces in $\Wrap$.   Using the same notation for the restriction of these modules to $\cB$,  we define a map
\begin{equation*}
\Delta \co  \cB \to  \cY^{l}_{K} \otimes \cY^{r}_{K}
\end{equation*}
of $\cB$ bimodules in Section \ref{sec:coproduct-as-map}; here $\cB$ stands for the diagonal bimodule.  Again, at the level of cohomology, $\Delta$ consists of a collection of maps
\begin{equation*}
 HW^{*}(L,L') \to  HW^{*}(K,L')  \otimes  HW^{*}(L,K)
\end{equation*}
for every pair $(L,L')$ of objects of $\cB$ satisfying appropriate conditions with respect to multiplication by morphism spaces in $\cB$.  Note that composition in the Fukaya category defines a natural map in the other direction. One should think of $\Delta$ as a ``pre-dual'' for multiplication; an honest dual cannot be expected since morphism spaces may be of infinite rank.

One may define Hochschild homology groups with coefficients in any bimodule in such a way that a map of bimodules induces a map on  Hochschild homology.  In the special case where the bimodule is a tensor product $\cL \otimes \cR$ of a left and a right module, we have
\begin{equation*}
HH_{*}(\cB, \cL \otimes \cR) \cong H^{*}(\cR \otimes_{\cB} \cL).
\end{equation*}

Applying this general nonsense to our geometric situation, we conclude that $\Delta$ induces a map
\begin{equation} \label{eq:map_HH_tensor_over_A}
HH_{*}(\Delta) \co HH_{*}(\cB, \cB) \to  H^{*}(\cY^{r}_{K} \otimes_{\cB} \cY^{l}_{K}).
\end{equation}
On the other hand, composition in $\cB$ defines a map 
\begin{equation} \label{eq:multiplication_right_left}
H^{*}(\mu) \co H^{*}(\cY^{r}_{K} \otimes_{\cB} \cY^{l}_{K}) \to HW^{*}(K,K).
\end{equation}
In Section \ref{sec:maps-relating-open}, we shall construct maps  $\OC$ and $\CO$ by counting solutions to a Floer-type equation on a disc with $1$ interior puncture and appropriate boundary conditions, and we shall prove the next result in Section \ref{sec:cardy-relation}:
\begin{prop} \label{prop:cardy_relation}
Up to an overall sign of $(-1)^{\frac{n(n+1)}{2}}$, the following diagram commutes:
\begin{equation}
 \label{eq:cardy_relation}
 \xymatrixcolsep{5pc} \xymatrix{ HH_{*}(\cB, \cB) \ar[r]^{HH_{*}(\Delta)}  \ar[d]^{H^{*}(\OC)} &  H^{*}(\cY^{r}_{K} \otimes_{\cB} \cY^{l}_{K}) \ar[d]^{H^*(\mu)} \\
SH^{*}(M) \ar[r]^{H^{*}(\CO)}  & HW^{*}(K,K)}
\end{equation}
\end{prop}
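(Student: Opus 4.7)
The plan is to realise both sides of diagram \eqref{eq:cardy_relation} as counts of holomorphic curves on a moduli space of annuli with prescribed boundary conditions, and to interpolate between the two sides via the modular parameter of the annulus.

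First I would identify the two sides geometrically at the chain level. The composition $H^{*}(\CO) \circ H^{*}(\OC)$ is obtained by gluing the $\OC$ domain --- a disc with cyclic $\cB$-boundary inputs and one interior output puncture --- to the $\CO$ domain --- a disc with one interior input puncture and two boundary marked points on $K$ --- along their interior punctures. The result is an annulus with cyclic $\cB$-inputs on the outer circle and a $K$-boundary on the inner circle carrying two marked points that output the chord in $HW^{*}(K,K)$. Dually, $H^{*}(\mu) \circ HH_{*}(\Delta)$ is represented by a disc with cyclic $\cB$-inputs on part of its boundary and a $K$-boundary arc cut off by the two $K$-chord marked points on the remaining part; this is the limit in which the inner $K$-circle of the annulus collapses to a single nodal point on the outer boundary, leaving only a $K$-labelled slit.

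Having placed the two configurations as ends of a common family, I would define the moduli space $\Annbar$ of annuli equipped with this boundary data, parametrised by their modulus, and study the corresponding parametrised $\dbar$-problem. The resulting one-dimensional moduli space has codimension-one boundary consisting of: (a) the long-cylinder limit, at which neck-stretching along a circle splits the annulus into an $\OC$ disc and a $\CO$ disc joined at a Hamiltonian orbit, representing $H^{*}(\CO) \circ H^{*}(\OC)$; (b) the collapsing limit, at which the inner $K$-circle shrinks to a boundary node, yielding the disc underlying $H^{*}(\mu) \circ HH_{*}(\Delta)$; and (c) standard bubbling and strip-breaking in the interior, which reassembles into the $A_\infty$ structure maps on the various complexes and hence vanishes on passage to cohomology. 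Equating the signed count of this codimension-one boundary to zero gives the Cardy identity up to an overall sign.

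The hard part will be identifying this overall sign as $(-1)^{\frac{n(n+1)}{2}}$. Both sides of the diagram are defined via coherent orientations on their constituent moduli spaces, and the interpolating annulus family induces a canonical isomorphism between the orientation lines at its two ends; the Cardy sign measures the discrepancy between this isomorphism and the product of the two independently chosen gluing isomorphisms. Computing the discrepancy reduces to an index-theoretic calculation of how the orientation line of a Cauchy--Riemann operator transforms as the inner $K$-circle is rotated a half-turn relative to the outer circle; the stated combinatorial factor $\binom{n+1}{2}$ emerges from this rotation combined with the Koszul signs incurred by reshuffling tensor factors in the Hochschild differential and in $\cY^{r}_{K} \otimes_{\cB} \cY^{l}_{K}$. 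Producing this rigorously will require setting up coherent orientations on all the moduli spaces at play and carefully tracking the signs through each of the two gluing operations.
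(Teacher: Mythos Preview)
Your overall strategy---interpolating through a moduli space of annuli whose two modular limits give $\CO\circ\OC$ and $\mu\circ CC_*(\Delta)$---matches the paper's, but there is a genuine technical obstruction you have not addressed. The composition $\mu\circ CC_*(\Delta)$ is a count of pairs of broken discs (one with two outputs from $\Delta$, one with one output from $\mu$), and these pairs \emph{cannot be glued} into annuli as they stand: the Floer data chosen to make $\Delta$ an $A_\infty$-bimodule map put weights on the two output ends of the $\Delta$-disc that need not match the weights on the corresponding input ends of the $\mu$-disc. The paper points out that no rescaling of conformal constants fixes this. Consequently the annulus moduli space does \emph{not} have $\mu\circ CC_*(\Delta)$ as a boundary stratum; what appears at that end is a different map $f$ counting the same broken configurations but with modified (compatible) Floer data on the $\mu$-component. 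One must then build a \emph{second}, auxiliary homotopy---parametrised by an interval's worth of Floer data on the $\mu$-disc, depending on the $\Delta$-disc---to show $f\simeq\mu\circ CC_*(\Delta)$. Your proposal collapses these two steps into one.

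Two smaller points. First, the $\CO$ domain has \emph{one} boundary marked point (the output chord), not two. Second, your account of the sign is off: the $(-1)^{n(n+1)/2}$ does not come from rotating the inner circle. At the boundary-node degeneration, the orientation isomorphism for the $\Delta$-disc involves $\lambda^{-1}(L_0)$ while the corresponding isomorphism coming from the annulus side involves $\lambda(L_0)$; converting one to the other uses $\lambda(L_0)\otimes\lambda(L_0)\cong\lambda(M)$ together with the symplectic orientation of $M$, and the Koszul sign for this identification is exactly $(-1)^{n(n+1)/2}$.
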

This a version of the Cardy relation, and requires the study of a moduli space of pseudo-holomorphic maps whose source is an annulus.   For example, if we consider an element in Hochschild homology represented by a endomorphism of $L$ then the composition of $\CO$ and $\OC$ corresponds to the first broken curve in Figure \ref{fig:Cardy}, while we interpret the other composition as the second pair of broken curves.   In this simplest situation, the moduli spaces we use have been recently studied by Biran-Cornea  and Fukaya respectively in the case of monotone and unobstructed compact Lagrangians.   By gluing the interior nodes or the pair of boundary nodes, one obtains an annulus with one boundary circle mapping to $L$ and the other to $K$, and the commutativity of  Diagram \eqref{eq:cardy_relation} follows from generalising the fact that we may interpolate between these two broken curves through the moduli space of annuli. 

\begin{figure}\label{fig:Cardy}
  \centering
\includegraphics{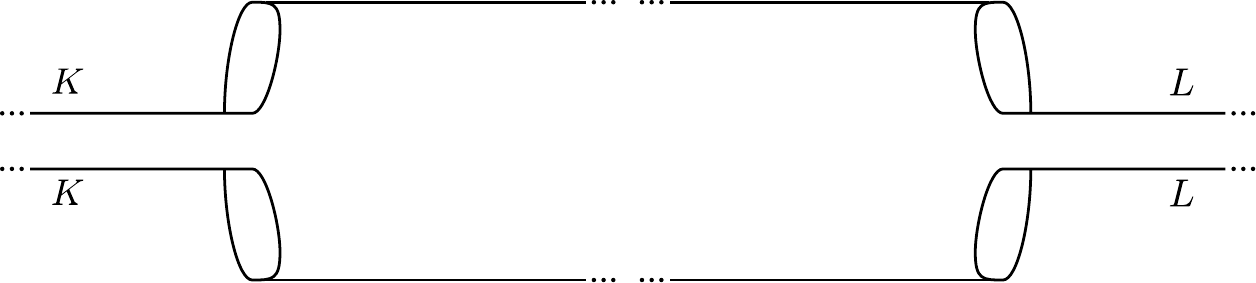} \\
\vspace{.2in}
   \includegraphics{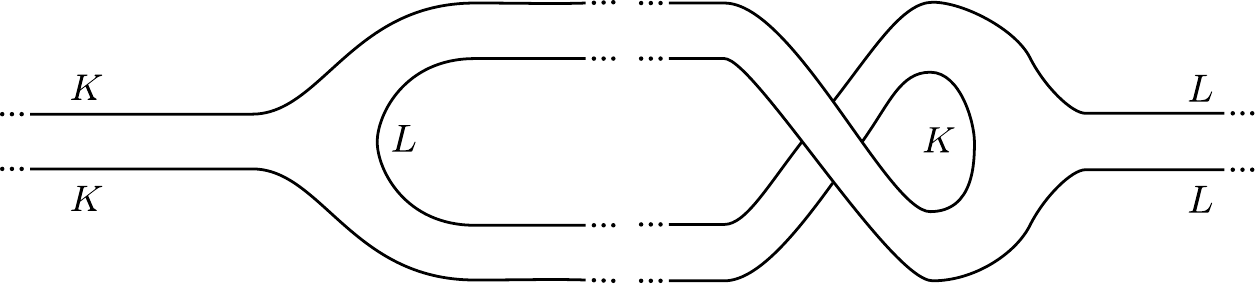} 
 \caption{}
\end{figure}

 We state a final algebraic Lemma proved in Appendix \ref{sec:univ-twist-compl} before establishing the main theorem:
\begin{lem} \label{lem:existence_twisted_complex}
If  the identity of  $ HW^{*}(K,K) $  lies in the image of $H^{*}(\mu)$, then $\cY^{r}_{K}$ is a summand of a twisted complex built from objects of $\cB$.
\end{lem}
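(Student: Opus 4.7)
The plan is to translate the given cycle in the bar complex computing $H^{*}(\cY^{r}_{K} \otimes_{\cB} \cY^{l}_{K})$ into the data of a twisted complex $T$ built from objects of $\cB$ together with $A_\infty$-module maps $f \co \cY^{r}_{T} \to \cY^{r}_{K}$ and $g \co \cY^{r}_{K} \to \cY^{r}_{T}$ whose composition corresponds, via Yoneda, to multiplication by the element $\mu(\alpha) \in HW^{*}(K,K)$ that $\alpha$ represents. Since this element equals the identity by hypothesis, $f \circ g$ will be homotopic to the identity, exhibiting $\cY^{r}_{K}$ as a retract --- hence a summand --- of $\cY^{r}_{T}$.

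Concretely, I would first unpack $\cY^{r}_{K} \otimes_{\cB} \cY^{l}_{K}$ as the two-sided bar complex spanned by tensors $a \otimes b_{k} \otimes \cdots \otimes b_{1} \otimes c$ with $a \in CW^{*}(L_{k},K)$, $c \in CW^{*}(K,L_{0})$, and $b_{i} \in CW^{*}(L_{i-1},L_{i})$ for objects $L_{j}$ of $\cB$; the Hochschild-style differential mixes the $A_\infty$-operations of $\cB$ with the actions on the two Yoneda modules. Picking a cycle $\alpha = \sum_{\iota} a_{\iota} \otimes b_{k_\iota,\iota} \otimes \cdots \otimes b_{1,\iota} \otimes c_{\iota}$ with $[\mu(\alpha)] = [e_K]$, I would assemble a single twisted complex $T$ whose underlying graded object is the direct sum (over $\iota$ and $j$) of the $L_{j}^{(\iota)}$ appearing in the summands of $\alpha$, with differential $\delta$ encoded by the interior letters $b_{i,\iota}$. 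The module map $g \co \cY^{r}_{K} \to \cY^{r}_{T}$ is then built with leading components the $c_\iota$'s (each $c_\iota \in HW^{*}(K,L_{0}^{(\iota)})$ being, by Yoneda, a morphism $\cY^{r}_{K} \to \cY^{r}_{L_{0}^{(\iota)}}$); symmetrically $f$ is built from the $a_\iota$'s, with higher $A_\infty$-module components supplied by shorter sub-tensors of $\alpha$ as required for $A_\infty$-linearity against $\delta$.

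The key verification --- and what I expect to be the principal bookkeeping obstacle --- is to match the bar cycle equation $d\alpha = 0$ term-by-term with the conjunction of (i) the Maurer-Cartan equation $\sum \mu^{j}(\delta,\ldots,\delta) = 0$ for $T$, (ii) the $A_\infty$-module equations for $f$ and $g$, and (iii) the Yoneda identification of $f \circ g$ with the endomorphism of $\cY^{r}_{K}$ determined by $\mu(\alpha)$. Each such identity has $k$-th component a signed sum over splittings of the sequence $a, b_{k}, \ldots, b_{1}, c$ into $\mu^{j}$-pieces, which is exactly how the two-sided bar differential organises $d\alpha$; the reorganisation is formal in the $A_\infty$ literature but requires careful attention to signs and degree shifts. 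Once this dictionary is in place, the hypothesis $\mu(\alpha) = e_{K} + \mu^{1}(\text{something})$ translates, via Yoneda, into $f \circ g \simeq \id_{\cY^{r}_{K}}$, and a standard splitting argument in the pre-triangulated $A_\infty$-setting (which I would invoke from the machinery of Appendix \ref{sec:univ-twist-compl}) realises $\cY^{r}_{K}$ as a summand of $\cY^{r}_{T}$, completing the proof.
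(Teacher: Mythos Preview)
Your approach has a genuine gap: the twisted complex $T$ you propose (with $\delta$ built from the interior letters $b_{i,\iota}$, block-diagonal over the summands $\iota$ of $\alpha$) need not satisfy the Maurer--Cartan equation. The bar cycle condition $d\alpha = 0$ does \emph{not} decompose as the conjunction of (i), (ii), (iii) in the way you suggest. Concretely, a term such as $a_\iota \otimes \cdots \otimes \mu^{2}(b_{i+1,\iota}, b_{i,\iota}) \otimes \cdots \otimes c_\iota$ in $d\alpha$ --- which would witness the failure of MC for your $\delta$ --- can cancel against a term $a_{\iota'} \otimes \cdots \otimes \mu^{1}(b_{j,\iota'}) \otimes \cdots \otimes c_{\iota'}$ coming from a \emph{different} summand $\iota'$ of the same bar length and label sequence; nothing forces $\mu^{2}(b_{i+1,\iota}, b_{i,\iota}) = 0$ on its own. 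The same phenomenon obstructs the closedness of your $f$ and $g$ as morphisms of twisted complexes.

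The paper circumvents this by not trying to extract Maurer--Cartan from $\alpha$ at all. It first builds a \emph{universal} truncated twisted complex $\Univ_K^N$ --- essentially the length-$\leq N$ bar resolution of $\cY^r_K$ --- whose differential $D_U$ is assembled from the $A_\infty$ operations themselves, so that Maurer--Cartan holds by the $A_\infty$ relations, independently of any cycle. The Yoneda lemma then identifies $(\cY^{r}_{K} \otimes_{\cB} \cY^{l}_{K})^{N}$ with $\Hom(\cY^r_K, \Univ_K^N)$, so the cycle $\alpha$ becomes a \emph{closed morphism} $g \colon \cY^r_K \to \Univ_K^N$; the map in the other direction is the canonical evaluation $\Univ_K^N \to \cY^r_K$ (closed again by the $A_\infty$ relations, not by any property of $\alpha$), and their composite is $\mu(\alpha)$. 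Only at the very end, to handle infinite rank, does the paper pass to a finite object --- but as a \emph{sub}-twisted-complex of $\Univ_K^N$ inheriting the already-valid $D_U$, not as a structure manufactured directly from the letters of $\alpha$. If you want to salvage your outline, the cleanest fix is exactly this: replace your hand-built $T$ by (a finite subcomplex of) $\Univ_K^N$, and let Yoneda do the work of converting $d\alpha = 0$ into closedness of $g$.
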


\begin{proof}[Proof of Theorem \ref{thm:main_thm}]
The statement that $\cB$ split-generates $\Wrap$ is equivalent to the existence, for each object $K$ of $\Wrap$, of a twisted complex built from objects of $\cB$ that admits $K$ as a summand.  The previous Lemma therefore reduces the proof of split-generation to the verification that the identity of each object lies in the image of   $H^{*}(\mu)$.

The identity  in symplectic cohomology is represented by a count of rigid holomorphic planes. On the other hand, the image of an element of symplectic cohomology under  $H^*(\CO)$ is obtained by counting rigid discs with one interior puncture converging to a representative of this element, and boundary conditions along $K$.   The result of gluing a rigid plane to the interior puncture can be deformed by a standard argument (see, e.g. Section 4.2 of \cite{abouzaid-htpy}) to a rigid disc with boundary on $K$; this count precisely represents the identity of $K$ in its wrapped Floer cohomology group.

Under the hypothesis of Theorem \ref{thm:main_thm}, the commutativity of Diagram \eqref{eq:cardy_relation} therefore implies that the identity in $ HW^{*}(K,K) $  lies in the image of  $H^{*}(\mu)$ which completes the proof.
\end{proof}

\subsubsection*{Acknowledgments}
I am grateful to Kenji Fukaya for showing me his proof that every unobstructed Lagrangian in  $\bC \bP^{n}$ must intersect the Clifford torus, for the discussions we had in which we proved that the Clifford torus split-generates the Fukaya category, and for pointing out that a preliminary version of this paper omitted a necessary sign in Lemma \ref{lem:second_homotopy_sign}.   I would also like to thank Paul Seidel for explaining to me that a quotient $\cA / \cB$ of differential graded categories vanishes if and only if every object of $\cA$ is equivalent to a summand of a cone on objects of $\cB$.  The original proof of Theorem \ref{thm:main_thm} used a circuitous argument that relied on this fact.  Finally, I would like to thank the referee whose suggested changes have helped improve the exposition.
\subsubsection*{Conventions}
All chain complexes are complexes of free abelian groups, with the tensor product taken over $\bZ$ unless otherwise specified.  Throughout the paper, we shall be proving that certain expressions vanish by showing that their terms correspond to the boundary of $1$-dimensional moduli spaces.  Away from characteristic $2$, one must in addition verify that the natural orientations on the boundaries of these moduli spaces differ from certain product orientations by signs that are fixed by conventions for categories of $A_{\infty}$ bimodules.   We provide the ingredients which define the appropriate orientations, and verify the validity of the signs in the simplest situations.
\section{Geometric preliminaries} \label{sec:geom-prel}
Recall that a Liouville manifold is a manifold $M$ equipped with a $1$-form $\lambda$ whose differential $\omega$ is a symplectic form and such that the vector field $Z_{\lambda}$ defined by the equation
\begin{equation*}  i_{Z_{\lambda}} \omega = \lambda \end{equation*}
generates a complete expanding flow called the Liouville flow. Away from a compact set in $M$ we require that this flow be modeled after multiplication on the positive end of the symplectisation of a contact manifold.  We shall denote by $\psi^{\rho}$ the Liouville flow for time $\log(\rho)$.  In practice, we abuse notation and write
\begin{equation*}
\label{eq:complete_M}
M = M^{in} \cup_{\partial M}  [1, +\infty) \times \partial M,
\end{equation*}
where $\partial M$ is shorthand for $\partial M^{in}$.   Note that the Liouville form on the collar is given by $\lambda = r (\lambda| \partial M) $ with $ \lambda| \partial M $  a contact form and $r$ the coordinate on $[1,+\infty)$.  Our first transversality assumption will be imposed on the Reeb orbits of $ \lambda| \partial M $:
\begin{equation}
  \label{eq:non-degenerate_Reeb_orbit}
  \parbox{32em}{All Reeb orbits of $ \lambda| \partial M $ are non-degenerate.}
\end{equation}

\begin{lem}
Condition \eqref{eq:non-degenerate_Reeb_orbit} holds for a generic choice of $\lambda$.  For such a contact form, there are only finitely many Reeb chords shorter than any given constant.
\qed
\end{lem}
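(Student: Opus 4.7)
I would prove the two assertions in the lemma by separate transversality arguments, each entirely standard in contact topology.

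For the genericity of \eqref{eq:non-degenerate_Reeb_orbit}, I would work in a Banach neighbourhood of $\lambda|\partial M$ inside the space of contact forms for the fixed contact structure on $\partial M$ (at $C^k$ regularity for $k$ large, with the $C^\infty$ statement recovered via the Taubes trick of writing $C^\infty = \bigcap_k C^k$ and intersecting residual subsets). Filter by period: for each $T > 0$, let $\mathcal{G}_T$ be the set of forms for which every closed Reeb orbit of period at most $T$ is non-degenerate, in the sense that the linearised Poincar\'e return map on the contact hyperplane has no eigenvalue equal to $1$. The strategy is to show each $\mathcal{G}_T$ is open and dense; intersecting over $T \in \bZ_{>0}$ then produces a residual, $C^\infty$-dense subset of non-degenerate contact forms. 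Openness is a routine application of the implicit function theorem combined with an Arzel\`a--Ascoli compactness argument, which shows that non-degenerate orbits of bounded period deform smoothly while ruling out new short orbits appearing under small perturbation.

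The density of $\mathcal{G}_T$ is the main Sard--Smale step and the principal obstacle. I would set up the universal moduli space of triples $(\lambda', T', \gamma)$ with $\gamma$ a closed Reeb orbit of $\lambda'$ of period $T' \leq T$, treating simple orbits and their iterates separately to avoid the usual multiple-cover complications. Transversality reduces to the finite-dimensional claim that by perturbing $\lambda'$ in a small neighbourhood of a regular point of the underlying simple orbit, one can prescribe any desired first-order change in the linearised return map. This follows from Moser's variational formula for the Reeb vector field under a change of contact form, and is carried out in foundational references for contact homology.

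For the finiteness of Reeb chords of length at most $T_0$ between two Legendrians $\Lambda_i, \Lambda_j \subset \partial M$ modelling the Lagrangians at infinity, I would examine the evaluation map $\Phi \co \Lambda_i \times (0, T_0] \to \partial M$ sending $(p, t) \mapsto \phi^t(p)$, where $\phi^t$ is the Reeb flow of a contact form satisfying \eqref{eq:non-degenerate_Reeb_orbit}. Reeb chords from $\Lambda_i$ to $\Lambda_j$ of length at most $T_0$ are precisely $\Phi^{-1}(\Lambda_j)$. A generic small Legendrian isotopy of $\Lambda_i$, arranged as part of the setup of the wrapped Fukaya category following \cite{abouzaid-seidel}, makes $\Phi$ transverse to $\Lambda_j$, so the preimage is a compact $0$-manifold, hence finite. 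Accumulation as $t \to 0$ is ruled out by the fact that the Reeb vector field is everywhere transverse to a Legendrian submanifold, so $\phi^t(\Lambda_i)$ is disjoint from $\Lambda_j$ for all sufficiently small $t > 0$.
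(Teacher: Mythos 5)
The paper offers no argument here at all: the lemma is stated with the proof omitted (note the \qed attached to the statement), both assertions being treated as standard facts, so there is no "paper route" to compare yours against — you are filling in an omission. Your first half is the standard proof: period-filtered sets $\mathcal{G}_T$, openness by the implicit function theorem plus Arzel\`a--Ascoli, density by Sard--Smale on a universal moduli space, and the Taubes intersection trick to pass from $C^k$ to $C^\infty$. One small correction there: you cannot prescribe an \emph{arbitrary} first-order change of the linearised return map by perturbing $\lambda$ near a point of the simple orbit — the return map stays symplectic, so only variations in the symplectic group are available. This is enough to move the eigenvalues off all roots of unity of order bounded by $T$ divided by the minimal period, which is precisely how the iterates (your ``multiple-cover complication'') are handled, and it is worth saying explicitly since it is the crux of that step.

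For the chord statement your argument is essentially right in the setting the paper eventually works in, but there are two mismatches to flag. First, the lemma claims finiteness of short Reeb chords for a generic $\lambda$ alone (it is even stated before the Lagrangians are introduced), whereas you achieve transversality of $\Phi$ by a generic Legendrian isotopy of $\Lambda_i$. That is consistent with the paper's later perturbation of the Lagrangians in Lemma \ref{lem:generic_Hamilt_chord}, but if you want the statement as literally written you can instead perturb the contact form within its conformal class, $\lambda|\partial M \mapsto f\,\lambda|\partial M$: this does not change which submanifolds are Legendrian but does change the Reeb flow, and the same Sard--Smale argument makes all chords between the \emph{fixed} Legendrians nondegenerate, hence isolated. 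Second, your treatment of the $t \to 0$ end is too quick: pointwise transversality of the Reeb field to a Legendrian does not by itself give a uniform lower bound on chord length. When $\Lambda_i = \Lambda_j$ one needs the standard argument (secant directions, or the Weinstein-neighbourhood model $J^1(\Lambda)$ in which the Reeb flow is vertical translation and $\Lambda$ is the zero section); and when $\Lambda_i \neq \Lambda_j$ intersect non-generically the claim can genuinely fail, with short chords accumulating at an intersection point, so you should first invoke generic disjointness of distinct Legendrians, available since $2(n-1) < 2n-1$. With these repairs $\Phi^{-1}(\Lambda_j)$ is a compact $0$-manifold and the finiteness follows as you say.
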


Next,  we consider a finite collection $\Ob(\Wrap)$  of exact connected properly embedded Lagrangians in $M$, such that 
\begin{equation} \label{eq:boundary_Legendrian} \parbox{32em}{$\lambda$ vanishes on $L \cap \partial M \times [1,+\infty)$ if $L \in \Ob(\Wrap)$ } \end{equation}
Note that this is precisely the condition that the intersection $\partial L$  of $L$ with $\partial M$ is Legendrian, and that $L$ is obtained by attaching an infinite cylindrical end to its intersection with $M^{in}$
\begin{equation*}  L  = L^{in} \cup_{\partial L}  [1, +\infty) \times \partial L. \end{equation*} 
We shall choose a primitive 
\begin{equation*}
f_L \co L \to \bR
\end{equation*}
for the restriction of $\lambda$ to each Lagrangian $L$ which, by Condition \eqref{eq:boundary_Legendrian} is necessarily locally constant on the cylindrical end of $L$.

Should we desire an integral grading on symplectic cohomology and the wrapped Fukaya category, we must impose the following additional condition on each object of $\Wrap$:
\begin{equation}
\label{eq:calabi-yauness}
\parbox{32em}{The relative first Chern class $2c_1(M,L) \in H^2(M,L)$  vanishes.}
\end{equation}
The first Chern class is taken for an almost complex structure on $M$ compatible with $\omega$; the notion is well-defined because the space of such almost complex structures is contractible.  Gradings in symplectic cohomology and in the wrapped Fukaya category require choices of
\begin{equation}
\label{eq:choice_volume_form_grading}
     \parbox{32em}{a trivialisation of $\left( \Lambda^{n}_{\bC} T^*M \right)^{\otimes 2}$, and a grading on each Lagrangian in $\Ob(\Wrap)$.}
\end{equation}
The vanishing of the first Chern class in Condition \eqref{eq:calabi-yauness} is precisely the obstruction to being able to make such a choice  (see \cite{seidel-GL} for details).  Note that a trivialisation  of $\Lambda^{n}_{\bC} T^*M$ is the same as the choice of a complex volume form on $M$ with respect to a compatible almost complex structure, so that we are requiring the choice of a quadratic volume form.

If, in addition,  we  desire for the Fukaya category to be defined over $\bZ$ (rather than only in characteristic $2$), we must assume that
\begin{equation}
\label{eq:spinness}
     \parbox{32em}{each Lagrangian $L \in \Ob(\Wrap)$ is spin.}
\end{equation}
Assuming this condition holds, we fix a spin structure (and orientation) on each Lagrangian.  For each object of $\Wrap$, a choice of grading and spin structures will be fixed from now on and go unmentioned.   

\subsection{Hamiltonians and almost complex structures}
We will work with a restricted family of functions $\sH(M) \subset C^{\infty}(M,\bR) $ such that, whenever $H \in \sH(M)$ we have
\begin{equation}
  \label{eq:quadratic_growth}
  H(r,y) = r^2
\end{equation}
away from some compact subset of $M$.

We fix such a function $H$, and write $X$ for its Hamiltonian flow and $\Chord(L_{0},L_1)$ for the set of time-$1$ flow lines of $X$ which start on a Lagrangian $L_0 \in \Ob(\Wrap)$ and end on $L_1  \in \Ob(\Wrap) $.  The relative analogue of \eqref{eq:non-degenerate_Reeb_orbit}, is the assumption that
\begin{equation}
  \label{eq:non-degenerate_chord}
    \parbox{32em}{all  time-$1$ Hamiltonian chords of $H$ with boundaries on $L_0$ and $L_1$  are non-degenerate.}
\end{equation}

\begin{lem} \label{lem:generic_Hamilt_chord}
For a generic Liouville form $\lambda$, Condition \eqref{eq:non-degenerate_chord} holds after a Hamiltonian perturbation of $L_0$ and $L_1$ which preserves Condition \eqref{eq:boundary_Legendrian}. \qed
\end{lem}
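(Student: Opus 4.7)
The plan is to decompose chords from $L_0$ to $L_1$ into two classes by where they travel---those remaining in the cylindrical end $[1,+\infty) \times \partial M$ and those meeting the interior $M^{in}$---and to handle each class by a different mechanism. The cylindrical chords will be controlled by the genericity assumption on $\lambda$ already extracted for Reeb \emph{orbits}, while the interior chords will be made non-degenerate by a compactly supported Hamiltonian perturbation of $L_0$ and $L_1$ that manifestly preserves the Legendrian condition \eqref{eq:boundary_Legendrian} at infinity.

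For cylindrical chords, I would first note that since $H = r^2$ on the collar and $\lambda = r(\lambda|\partial M)$, the Hamiltonian vector field takes the form $X_H = 2r R$, where $R$ is the Reeb vector field of $\lambda|\partial M$ lifted to be horizontal with respect to the $r$-coordinate. In particular $X_H$ is tangent to the level sets $\{r\} \times \partial M$, so any chord contained in the cylindrical end sits at constant radius $r$ and its $\partial M$-projection is a Reeb chord from $\partial L_0$ to $\partial L_1$ of length $2r$. A non-degeneracy calculation then shows that such a Hamiltonian chord is non-degenerate if and only if the corresponding Reeb chord from $\partial L_0$ to $\partial L_1$ is non-degenerate as a relative Reeb chord. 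The Reeb-chord analogue of Condition \eqref{eq:non-degenerate_Reeb_orbit} is a routine strengthening of the lemma preceding the statement, achieved for generic $\lambda$ by the same Sard--Smale argument applied to the evaluation map on the space of short Reeb arcs; the finiteness of Reeb chords below any given action follows as before, ensuring that only finitely many high chords appear in each action window.

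For chords meeting $M^{in}$, I would perform a $C^\infty$-small, compactly supported Hamiltonian isotopy of $L_0$ and of $L_1$. Since the generating Hamiltonian vanishes on the cylindrical end, the perturbed Lagrangians still satisfy \eqref{eq:boundary_Legendrian} with the \emph{same} boundary Legendrians, so the cylindrical chords analysed above are unaffected. The standard parametric transversality argument---considering the universal moduli space of pairs consisting of a compactly supported Hamiltonian perturbation and a time-$1$ Hamiltonian chord, and projecting to the space of perturbations---reduces non-degeneracy to the Sard--Smale theorem. Compactness of $L_i \cap M^{in}$ together with the bound on the Hamiltonian flow inside $M^{in}$ ensures that interior chords stay in a fixed compact subset, so only finitely many Bott families can appear and all can be broken simultaneously by a single generic perturbation.

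The main point that requires care is the matching between the two regimes: one must verify that a generic perturbation of $L_0, L_1$ which breaks degeneracy of interior chords does not create new chords that wander across the boundary $\{1\} \times \partial M$ in bad ways. This is handled by choosing the perturbation small enough relative to the minimal action of relevant Reeb chords of $\lambda|\partial M$, so that no newly created chord leaves a slightly enlarged compact set; here the finiteness of short Reeb chords from the preceding lemma is exactly what is needed. With these ingredients in place the conclusion follows.
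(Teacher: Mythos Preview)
The paper gives no proof of this lemma: it is stated with a \qed\ box and no argument, so there is nothing to compare against directly. Your outline is the standard argument one would supply here, and it is essentially correct: split chords according to whether they lie in the collar (where $X_H = 2rR$ and chords biject with Reeb chords of length $2r$ between $\partial L_0$ and $\partial L_1$) or meet the interior (handled by a compactly supported Hamiltonian perturbation), and use genericity of $\lambda$ for the former and Sard--Smale for the latter.

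One point worth tightening: you correctly note that Condition \eqref{eq:non-degenerate_Reeb_orbit} concerns Reeb \emph{orbits}, not Reeb \emph{chords}, and that a chord analogue must be invoked. This is indeed routine, but you should be explicit that the genericity is achieved by perturbing the contact form $\lambda|\partial M$ (e.g.\ by multiplication by a positive function close to $1$), which preserves the condition that $\partial L_i$ is Legendrian and hence preserves \eqref{eq:boundary_Legendrian}. Also, the non-degeneracy of a Hamiltonian chord in the collar is not quite identical to non-degeneracy of the underlying Reeb chord: the linearised time-$1$ flow of $H=r^2$ couples the radial direction to the Reeb direction, so one must check that non-degeneracy of the Reeb chord (together with the fact that the Reeb-chord length spectrum is discrete, so no nearby $r$ carries a chord) yields non-degeneracy of the Hamiltonian chord. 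This is a short computation, but it is where the argument has content beyond bookkeeping.
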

With this non-degeneracy assumption, the Maslov index defines an integral grading on the elements of $\Chord(L_0, L_1)$.  We shall write $\deg$ for the Maslov index.

Next, we consider the space $\sJ(M)$ of almost complex structures which are compatible with $\omega$, and whose restriction to the collar is of contact type in the sense that
\begin{equation}
  \label{eq:contact_type_complex}
  \lambda \circ J = dr.
\end{equation}

Given a family  $I_t \in  \sJ(M)$, parametrised by $t \in [0,1]$, we consider maps
\begin{equation*} u \co (-\infty, +\infty) \times [0,1] \equiv Z \to M  \end{equation*}
converging exponentially at each end to  time-$1$ periodic chords of $H$, with boundary conditions
\begin{align*}
u(s,0) \in & L_0 \\
u(s,1) \in & L_1
\end{align*}
and satisfying  Floer's equation
\begin{equation}
  \label{eq:dbar_strip}
  \left(du - X \otimes dt  \right)^{0,1} = 0.
\end{equation}
Here the strip is given  coordinates $(s,t)$ and complex structure  $j(\partial_s) = \partial_t$, so that  we may write Floer's equation as
\begin{equation*}
  \partial_{s}u = - I_{t} \left(\partial_tu - X \right).
\end{equation*}

Given a pair $x_0, x_1 \in  \Chord(L_0,L_1)$, we write $\Disct(x_0;x_1)$ for the set of such maps $u$ which converge to $x_0$ when $s$ approaches $-\infty$ and to $x_1$ at $+\infty$.   As this is a component of  the zero-locus of an elliptic operator on the space of smooth maps on the strip, it carries a natural topology.  Moreover,  since Equation \eqref{eq:dbar_strip} is invariant under translation in the $s$-variable, we obtain a continuous $\bR$ action on $\Disct(x_0;x_1)$.  Following \cite{FHS}, the usual transversality argument implies:
\begin{lem} \label{lem:free_R_action_strips}
For a generic family $I_t$, the moduli space  $\Disct(x_0;x_1)$ is regular; in particular it is a smooth manifold of dimension $\deg(x_0) - \deg(x_1)$.  Unless $\deg(x_0) = \deg(x_1)$, the action of $\bR$ is smooth and free. \qed
\end{lem}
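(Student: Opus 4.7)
The plan is to follow the standard Floer--Hofer--Salamon setup, adapted to our context with a quadratic Hamiltonian and contact-type almost complex structures at infinity. The argument has three parts: (a) $C^{\infty}$-compactness of image, so perturbations localized in $M^{in}$ suffice; (b) the standard Sard--Smale transversality argument on a universal moduli space; (c) analysis of the $\bR$-action.

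First I would set up the Banach manifold framework. Fix a Banach space neighborhood $\cJ$ of $I_t$ inside the contact-type $t$-families in $\sJ(M)$ (using Floer's $C^{\epsilon}$ norm), and consider the Banach manifold $\cB^{1,p}(x_0;x_1)$ of $W^{1,p}_{\mathrm{loc}}$ maps $u\co Z\to M$ with the required Lagrangian boundary conditions and exponential convergence to $x_0,x_1$ at the ends (non-degeneracy of the chords, via \eqref{eq:non-degenerate_chord}, gives the exponential decay weights). Floer's equation \eqref{eq:dbar_strip} defines a smooth section of a Banach bundle, and the universal moduli space $\Disct^{\mathrm{univ}}(x_0;x_1)$ is its zero set inside $\cB^{1,p}\times \cJ$. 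A crucial preliminary observation is that by \eqref{eq:quadratic_growth} and \eqref{eq:contact_type_complex}, the function $r\circ u$ is subharmonic near infinity in the usual way, so by the maximum principle all elements of $\Disct(x_0;x_1)$ take values in a compact subset of $M$ determined only by $x_0,x_1,H$. This means allowing perturbations of $I_t$ supported in any chosen compact subset of $M^{in}$ is enough to cover the zero-set.

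Next I would verify that the linearization $D_u$ of the universal section is surjective at any solution $u$. The vertical differential in the domain directions is the usual Floer operator $D_u^{\mathrm{v}}$: a Cauchy--Riemann-type operator with totally real boundary conditions, which is Fredholm of index $\deg(x_0)-\deg(x_1)$ by the Riemann--Roch/Maslov-index computation standard in Floer theory. Surjectivity of the full linearization follows from the $J$-direction: given a nonzero element $\eta$ of $\mathrm{coker}(D_u^{\mathrm v})$, the linearization in the $I_t$ direction is $Y\mapsto \tfrac12 Y(u)\partial_s u\,ds\wedge\cdot$, and at any \emph{somewhere-injective} point $(s,t)\in Z$ with $u(s,t)\in M^{in}$ one can find $Y\in T_{I_t}\cJ$ pairing nontrivially with $\eta$ against a cutoff. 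Somewhere-injectivity for strips converging to distinct asymptotic chords (or with nontrivial $s$-derivative) is precisely the content of the FHS argument, which carries over verbatim since in our situation the image meets $M^{in}$ nontrivially (otherwise $u$ would lie in the collar and the contact-type maximum principle forces $u$ to be the constant chord, contradicting $x_0\ne x_1$ or $\partial_s u\ne 0$). Surjectivity makes $\Disct^{\mathrm{univ}}(x_0;x_1)$ a Banach manifold, and the projection to $\cJ$ is Fredholm of index $\deg(x_0)-\deg(x_1)$; Sard--Smale then yields a comeager set of regular $I_t$ for which $\Disct(x_0;x_1)$ is a smooth manifold of that dimension. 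Intersecting over all pairs $(x_0,x_1)$, which form a countable set by the corresponding unrelative lemma, produces a comeager set of generic $I_t$ working for all pairs simultaneously.

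Finally I would analyze the $\bR$-action. It is smooth because translation acts smoothly on the Banach chart. An element $u\in\Disct(x_0;x_1)$ has nontrivial stabilizer only when $u(s+a,t)=u(s,t)$ for some $a>0$, in which case $u$ is $s$-independent: then Floer's equation reduces to $\partial_t u=X(u)$, so $u$ is a Hamiltonian chord with $x_0=x_1$, hence $\deg(x_0)=\deg(x_1)$. The hard part of the lemma is, as expected, establishing somewhere-injectivity to drive the transversality argument; everything else is a direct application of the index formula and a symmetry argument for the action.
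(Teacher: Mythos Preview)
Your proposal is correct and is precisely the standard Floer--Hofer--Salamon argument that the paper invokes; the paper itself offers no proof beyond the sentence ``Following \cite{FHS}, the usual transversality argument implies'' and the \qed\ symbol. Your sketch fills in the details faithfully, including the maximum-principle confinement (which the paper uses explicitly only for the subsequent compactness Lemma~\ref{lem:compactification_strip_manifold}) needed to justify that perturbations of $I_t$ supported away from the collar suffice.
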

We write $\Disc(x_0;x_1)$ for the quotient of $\Disct(x_0;x_1)$ by the $\bR$ action whenever it is free, and declare it to be the empty set otherwise.

Following Gromov and Floer, one may construct a bordification $\Discbar(x_0; x_1)$ by adding \emph{broken strips}
\begin{equation}
  \label{eq:compactification_strip}
  \Discbar(x_0; x_1)  = \coprod   \Disc(x_0; y_1) \times   \Disc(y_1; y_2)  \times \cdots \times  \Disc(y_k; y_{k+1})  \times  \Disc(y_{k+1}; x_1) 
\end{equation}

\begin{lem} \label{lem:compactification_strip_manifold}
For a generic family $I_t$, the moduli space  $\Discbar(x_0;x_1)$ is a compact manifold with boundary of dimension $\deg(x_0) - \deg(x_1) -1$.  Moreover, the boundary is covered by the closure of the images of the natural inclusions
\begin{equation*}   \Disc(x_0; y) \times   \Disc(y; x_1) \to     \Discbar(x_0; x_1).  \end{equation*} 
\end{lem}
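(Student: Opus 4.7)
The plan is to establish compactness, then smoothness of the compactified moduli space at its boundary strata, and finally identify which strata form the boundary.

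First I would prove $C^0$-compactness of the non-bordified moduli space. Since the Lagrangians are exact and $H$ grows quadratically at infinity (Condition \eqref{eq:quadratic_growth}), a standard maximum principle applied to the function $r \circ u$ — using the contact-type condition \eqref{eq:contact_type_complex} on $I_t$ together with the form of $X$ at infinity — prevents any sequence of solutions from escaping to $r = \infty$. Combined with a uniform energy bound (controlled by the action difference $\cA(x_0) - \cA(x_1)$ thanks to exactness of the primitives $f_{L_0}, f_{L_1}$), this rules out sphere and disc bubbling: any nonconstant bubble would be a non-trivial $J$-holomorphic sphere or disc with boundary on $L_0$ or $L_1$, contradicting exactness. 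Thus the only source of non-compactness is strip-breaking, which gives precisely the Gromov-Floer convergence to elements of the stratification \eqref{eq:compactification_strip}.

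Next I would combine transversality (Lemma \ref{lem:free_R_action_strips}) with an inductive dimension count: each stratum corresponding to breaking at intermediate chords $y_1, \ldots, y_{k+1}$ is a product of unparametrised strip moduli spaces of total dimension
\begin{equation*}
\bigl( \deg(x_0) - \deg(x_1) - 1 \bigr) - k,
\end{equation*}
so strata with $k \geq 1$ intermediate breakings have codimension at least $k$ in the expected top-dimensional stratum.

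The main step — and the main obstacle — is the gluing theorem at broken configurations. For any pair of rigid strips $u_1 \in \Disc(x_0; y)$, $u_2 \in \Disc(y; x_1)$ with $\deg(x_0) - \deg(y) = 1$ or $\deg(y) - \deg(x_1) = 1$, one constructs a pre-glued approximate solution by cutting off $u_1$ and $u_2$ near $\pm\infty$ and interpolating on a strip of length $R \gg 0$, then corrects via a contraction-mapping argument using a uniformly right-invertible linearised operator. This produces a smooth half-open collar $[R_0, \infty) \hookrightarrow \Discbar(x_0; x_1)$ whose boundary at $R = \infty$ is the broken configuration. A standard surjectivity-of-gluing argument, comparing the Banach-space normal forms near a broken solution with actual convergent sequences, shows that every sequence converging to a once-broken configuration lies in the image of this collar for large $R$. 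Combined with the codimension estimate on higher strata, this gives the manifold-with-boundary structure and identifies the boundary as the union of the images of the inclusions from once-broken strips, as claimed.

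Higher-codimension strata (with $k \geq 1$ intermediate breakings) are handled inductively by the same gluing technology applied at each node, yielding corners modelled on $[R_0, \infty)^{k+1}$; the statement in the lemma only asserts boundary coverage, so these corner strata automatically sit inside the closure of the once-broken strata.
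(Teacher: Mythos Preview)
Your proposal is correct and follows essentially the same approach as the paper. The paper's own proof is much terser: it treats the manifold-with-boundary structure (transversality plus gluing) as a standard fact going back to Floer's original work, and singles out only the one genuinely non-standard ingredient in this non-compact setting---preventing solutions from escaping to infinity---which it handles by the maximum principle (or alternatively by the action/energy estimate of Lemma~\ref{lem:stay_in_compact_set}), exactly as you do.
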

\begin{proof}
The fact that $ \Discbar(x_0;x_1) $ is a manifold with boundary is a standard fact that does not have a clean proof in the literature.  In the case of $0$ and $1$ dimensional moduli spaces (which is the only part we need), the result essentially goes back to Floer (see Proposition 4.1 in \cite{floer}) and has been reproved in various settings since.  In order to prove the Lemma, is suffices therefore to prevent  solutions of the Cauchy-Riemann equation \eqref{eq:dbar_strip} from escaping to infinity.  However, the maximum principle prevents this eventuality from occurring.  Alternatively, we may apply Lemma \ref{lem:stay_in_compact_set}.
\end{proof}

Since there is no reason for $\Chord(L_0,L_1)$ to be finite, we shall require a stronger version of compactness. 
\begin{lem} \label{lem:finite_number_outputs_lag}
For each chord $x_1$, the moduli space $\Discbar(x_0;x_1)$ is empty for all but finitely many choices of $x_0$. 
\end{lem}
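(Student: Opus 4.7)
The plan is to use the standard energy-action inequality to force the incoming chord $x_0$ to lie in a bounded region of $M$, and then to invoke non-degeneracy plus finiteness of short Reeb chords to conclude that only finitely many such chords exist.

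First I would introduce the action functional appropriate to Floer's equation \eqref{eq:dbar_strip}, namely
\begin{equation*}
\mathcal{A}_{H}(x) \;=\; -\int_{0}^{1} x^{*}\lambda \;+\; \int_{0}^{1} H(x(t))\,dt \;+\; f_{L_{1}}(x(1)) \;-\; f_{L_{0}}(x(0)),
\end{equation*}
and record the standard identity that for any $u \in \Disct(x_{0};x_{1})$ one has
\begin{equation*}
0 \;\le\; E(u) \;=\; \int_{Z} |\partial_{s} u|^{2}\, ds\, dt \;=\; \mathcal{A}_{H}(x_{0}) - \mathcal{A}_{H}(x_{1}).
\end{equation*}
This inequality extends immediately to broken strips by telescoping along the intermediate chords appearing in \eqref{eq:compactification_strip}, so any $x_{0}$ with $\Discbar(x_{0};x_{1}) \ne \emptyset$ satisfies $\mathcal{A}_{H}(x_{0}) \ge \mathcal{A}_{H}(x_{1})$.

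Next I would compute $\mathcal{A}_{H}(x)$ for a chord $x$ lying at a fixed level $r$ in the cylindrical end. Since $\lambda = r (\lambda|\partial M)$ and $H(r,y) = r^{2}$ by \eqref{eq:quadratic_growth}, the Hamiltonian vector field is $X = 2r R$, where $R$ is the Reeb vector field of $\lambda|\partial M$. Then $\int_{0}^{1} x^{*}\lambda = 2r^{2}$ and $\int_{0}^{1} H(x)\,dt = r^{2}$, while the primitives $f_{L_{i}}$ are locally constant on the cylindrical end by \eqref{eq:boundary_Legendrian}. Since each $L \in \Ob(\Wrap)$ has finitely many components, the values taken by $f_{L}$ at infinity form a finite set, so
\begin{equation*}
\mathcal{A}_{H}(x) \;=\; -r(x)^{2} \;+\; O(1)
\end{equation*}
uniformly for chords in the cylindrical end. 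Combined with the action inequality above, this forces $r(x_{0})^{2} \le -\mathcal{A}_{H}(x_{1}) + C$ for a constant $C$ depending only on the primitives $f_{L}$, so $x_{0}$ is confined to a compact subset $\{r \le C'(x_{1})\}$.

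Finally, the non-degeneracy assumption \eqref{eq:non-degenerate_chord} implies that chords are isolated in the interior $M^{in}$, hence finite in any compact region there. On the cylindrical end, Hamiltonian chords at level $r$ correspond bijectively to Reeb chords of $\lambda|\partial M$ of length $2r$ between $\partial L_{0}$ and $\partial L_{1}$; by the analogue of the finiteness statement in the opening lemma of the section (finiteness of Reeb chords shorter than any constant), there are only finitely many such chords with $r \le C'(x_{1})$. Putting these together yields the claimed finiteness of $x_{0}$.

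The main obstacle is purely bookkeeping: getting the sign in the action-energy identity to match the conventions chosen for Floer's equation \eqref{eq:dbar_strip}, and checking that the contribution of the primitives $f_{L}$ really is $O(1)$ (which reduces to the observation that each $L \in \Ob(\Wrap)$ is properly embedded with Legendrian boundary, so its cylindrical end has finitely many connected components). No new analytic input beyond the maximum principle already invoked in Lemma \ref{lem:compactification_strip_manifold} is required.
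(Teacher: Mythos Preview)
Your argument is correct and rests on the same mechanism as the paper's: an action estimate forces $x_0$ into a compact region, after which non-degeneracy of chords yields finiteness.  The packaging differs slightly.  You work globally on the strip, invoking the classical identity $E(u)=\mathcal A_H(x_0)-\mathcal A_H(x_1)\ge 0$ together with the computation $\mathcal A_H(x)=-r(x)^2+O(1)$ on the end.  The paper instead chooses a level $R$ so that no chord crosses $\{R\}\times\partial M$, restricts $u$ to $S=u^{-1}\big([R,+\infty)\times\partial M\big)$, and feeds this into the general integrated maximum principle of Lemma~\ref{lem:stay_in_compact_set} (with Lemma~\ref{lem:action_negative} supplying negativity of the action of $x_0$).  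Your route is more self-contained for this particular lemma; the paper's route pays off because Lemma~\ref{lem:stay_in_compact_set} is the single tool reused for every subsequent compactness statement (pairs of pants, discs with interior punctures, annuli), where a direct global energy identity would have to be rederived case by case.  One small point: your appeal to ``finiteness of Reeb chords shorter than a given constant'' is not the statement of the opening lemma (which concerns closed orbits), but it is an immediate consequence of the non-degeneracy hypothesis~\eqref{eq:non-degenerate_chord}, so the argument stands.
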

\begin{proof}
Choose a positive constant $R$ such that $\{R \} \times \partial M$ separates $x_1$ from infinity.  Since $H$ depends only on the radial variable along the neck, any element  $x \in \Chord(L_0,L_1)$ which intersects $ \{ R \}  \times  \partial M $ is contained in such a hypersurface, so Assumption \eqref{eq:non-degenerate_Reeb_orbit} implies that we may choose $R$ so that no chord intersects it.   As there are only finitely many chords in any compact subset of $M$, it suffices to prove that   $\Discbar(x_0;x_1)$ is empty whenever $x_0$ lies in $ [R , +\infty) \times \partial M $.  Using Lemma \ref{lem:action_negative}, this is an immediate consequence of Lemma \ref{lem:stay_in_compact_set} applied to $S = u^{-1}(  [R , +\infty) \times \partial M) $ and $v = u|S$.
\end{proof}

\section{Product and coproduct in wrapped Floer cohomology}
The construction of the wrapped Fukaya category described in \cite{abouzaid-seidel} is cumbersome for the purpose of defining the operation $\Delta$ introduced in Section \ref{sec:copr-wrapp-categ}.  Instead, we describe a different construction of the wrapped Fukaya category outlined in Section 3.2 of \cite{FSS}.  Our conventions are normalised so that (1) the differential raises degree and (2) the natural pair of pants product has degree $0$.

\subsection{Wrapped Floer cohomology}
For each pair $(L_0,L_1)$ of Lagrangians, we define the wrapped Floer complex to have underlying graded components
\begin{equation}
  \label{eq:wrapped_complex}
 CW^{i}(L_0,L_1; H, I_{t}) \equiv \bigoplus_{\stackrel{x \in \Chord(L_i,L_j)}{ \deg(x) = i}} | o_x|
\end{equation}
Here, $| o_x|$ is the orientation line of a certain rank-$1$ $\bR$-vector space $o_x$ associated to each chord (see Section (12b) of \cite{seidel-book}).  We shall omit $H$ and $I_{t}$ from the notation unless necessary.

In order to define a differential on the wrapped complex, we note that whenever $\deg(x_0) = \deg(x_1) +1 $, we obtain an isomorphism
\begin{equation*}  o_{x_1} \to o_{x_0}   \end{equation*}
from Equation \eqref{eq:orientation_differential_HW} and the trivialisation of the $\bR$ action on the moduli space $\Disc(x_0;x_1)$ explained in Remark \ref{rem:trivialise_R_action}.

Writing $\mu_u$ for the induced morphism on orientation lines, we define
\begin{align}
  \label{eq:wrapped_differential}
  \mu^{1} \co  CW^{i}(L_0,L_1)  & \to CW^{i+1}(L_0,L_1)  \\
[x_1] & \mapsto (-1)^i \sum_{u} \mu_u([x_1] )
\end{align}

Note that the right hand side is in fact a finite sum  because of Lemma \ref{lem:finite_number_outputs_lag}.  The proof that $\mu^1$ defines a differential is by now standard (see, e.g \cite{abouzaid-seidel}) and is omitted.  We call the resulting cohomology group \emph{wrapped Floer cohomology} and denote it by $HW^*(L_0, L_1)$.

\subsection{Composition in the wrapped category}\label{sec:comp-wrapp-categ}
Note that pullback of the moduli space of solutions to Equation \eqref{eq:dbar_strip} by the Liouville flow for time $\log(\rho)$ defines a canonical isomorphism
\begin{equation}
  \label{eq:isomorphic_complexes_rescale}
CW^{*}(L_0,L_1; H, I_{t}) \cong   CW^{*}\left( \psi^{\rho} L_0, \psi^{\rho} L_1;   \frac{H}{\rho} \circ  \psi^{\rho} ,  (\psi^{\rho})^{*}I_{t} \right),
\end{equation}
where the right hand side is computed with respect to the symplectic form $\omega$ (rather than its pullback under $ \psi^{\rho}$), so that the Hamiltonian flow of $  \frac{H}{\rho} \circ  \psi^{\rho} $ is precisely the pullback of $ X_H$.  The main observation required to define an $A_{\infty}$ structure on the complexes $CW^{*}(L_0,L_1)$ is:
\begin{lem} \label{lem:rescaled_function_admissible}
The function  $  \frac{H}{\rho^2} \circ  \psi^{\rho} $ lies in $\cH(M)$.
\end{lem}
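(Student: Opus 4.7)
The plan is to verify directly that $\frac{H}{\rho^2} \circ \psi^{\rho}$ satisfies the defining condition \eqref{eq:quadratic_growth} of $\sH(M)$, namely that it equals $r^2$ away from a compact subset of $M$. Since both hypotheses and conclusion concern only the behavior on the cylindrical end, the proof amounts to a one-line computation once the Liouville flow is written out in coordinates.

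First I would identify $\psi^{\rho}$ explicitly on the collar $[1,+\infty) \times \partial M$. Because $\lambda = r \, (\lambda|\partial M)$, the defining equation $i_{Z_\lambda}\omega = \lambda$ yields $Z_\lambda = r\,\partial_r$, so its time-$\log(\rho)$ flow is the radial dilation
\begin{equation*}
\psi^{\rho}(r,y) = (\rho r, y).
\end{equation*}
Assuming without loss of generality that $\rho \geq 1$ (the case of interest for rescaling in the wrapped construction), this flow maps the collar into itself.

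Next, since $H \in \sH(M)$, there exists $R_0 \geq 1$ such that $H(r,y) = r^2$ whenever $r \geq R_0$. For any point $(r,y)$ on the collar with $r \geq R_0/\rho$, the image $\psi^{\rho}(r,y) = (\rho r, y)$ has radial coordinate at least $R_0$, and therefore
\begin{equation*}
\frac{H}{\rho^2} \circ \psi^{\rho}(r,y) \;=\; \frac{H(\rho r, y)}{\rho^2} \;=\; \frac{(\rho r)^2}{\rho^2} \;=\; r^2.
\end{equation*}
Hence $\frac{H}{\rho^2} \circ \psi^{\rho}$ agrees with $r^2$ outside the compact set $M^{in} \cup ([1, R_0/\rho] \times \partial M)$, which is precisely the required admissibility condition.

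There is no genuine obstacle here; the only point to be careful about is to use the collar form of $\lambda$ to identify $\psi^{\rho}$ with a dilation in $r$, and to match the quadratic rescaling factor $\rho^{-2}$ with the quadratic growth of $H$. The $\rho^{-1}$ rescaling that appears in \eqref{eq:isomorphic_complexes_rescale} (reflecting the rescaling of the Hamiltonian \emph{vector field}) is replaced by $\rho^{-2}$ in the Lemma precisely because $\omega$ scales by $\rho$ under $\psi^\rho$, absorbing one factor; this is consistent with the computation above.
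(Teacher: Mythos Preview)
Your proof is correct and follows exactly the paper's approach: identify the Liouville flow on the collar as the radial dilation $\psi^{\rho}(r,y)=(\rho r,y)$ and then observe that $r^2\circ\psi^{\rho}=\rho^2 r^2$, so dividing by $\rho^2$ restores the quadratic asymptotic. The paper's own proof is the same one-line computation, stated more tersely; your additional remarks about the compact set and the vector field $Z_\lambda=r\,\partial_r$ are correct elaborations but not different in substance.
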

\begin{proof}
This is an elementary computation using the fact that the Liouville flow is given on the collar by
\begin{equation}
 \label{eq:Liouville_flow_collar}
 \psi^{\rho} (r,y) =  (\rho \cdot r,y) .
\end{equation}
In particular, $  r^{2} \circ  \psi^{\rho} = \rho^{2} r^{2} $.  
\end{proof}

Given a triple of Lagrangians $L_0$, $L_1$ and $L_2$, we shall define a chain map
\begin{equation*}
CW^{*}(L_1, L_2; H, I_{t})  \otimes CW^{*}(L_0,L_1; H, I_{t})  \to   CW^{*}\left( \psi^{2} L_0, \psi^{2} L_2;   \frac{H}{2} \circ  \psi^{2} ,  (\psi^{2})^{*}I_{t} \right),
\end{equation*}
which when composed with the inverse of the isomorphism of Equation \eqref{eq:isomorphic_complexes_rescale} gives the product
\begin{equation}
 \label{eq:cup_product}
\mu^2 \co CW^{*}(L_1, L_2)  \otimes CW^{*}(L_0,L_1)  \to   CW^{*}( L_0,  L_2)
\end{equation}
in the wrapped Fukaya category.

Let $S$ denote the complement of three points $(\xi^0,\xi^1,\xi^2)$  ordered counter-clockwise on the boundary of $D^2$.  The operation $\mu^{2}$  will be defined by counting solutions to an elliptic equation on $S$ analogous to Equation \eqref{eq:dbar_strip} for which we must make the following choices:  Fix a map $\rho_{S}$ from the boundary of $D^2$ to the interval  $[1,2]$   such that
\begin{equation}
  \label{eq:condition_arc_labelling}
  \parbox{32em}{$\rho_S(z) = 1$ if $z$ is near $\xi^{1}$ or  $\xi^{2}$ and $\rho_S(z) = 2$ if $z$ is near $\xi^0$.} 
\end{equation}

Let $Z_+$ and $Z_-$ denote the positive and negative half-strips  in $Z$ and choose embeddings 
\begin{align*}
\epsilon^{0} \co Z_- & \to S \\
 \epsilon^k \co Z_+ & \to S \textrm{ if k=1,2} 
\end{align*}
which map $\partial Z_{\pm}$ to $\partial S$ and converge to the respective marked points $\xi^k$.  This will be called a choice of \emph{strip-like ends} for $S$.

Choose a \emph{closed} $1$-form $\alpha_{S}$ on $S$ whose restriction to the boundary vanishes, and whose pullback under $  \epsilon^k $ agrees with $dt$ if $k=1,2$ and with $2dt$ if $k=0$.  In addition, choose maps
\begin{align*}
 H_{S} & \co S  \to \sH(M) \\
  I_{S} & \co S  \to \sJ(M)
\end{align*}
whose compositions with $\epsilon^k$ agree with $H$ and $I_{t}$ if $k = 1,2$, and with $\frac{H \circ \psi^{2}}{4}$ and $(\psi^{2})^{*}I_{t}$ if $k=0$.  Given $z \in S$, we write $X_{S}(z)$ for the Hamiltonian flow of $H_{S}(z)$.

We write $\Disc_{2}(x_0;x_1,x_2)$ for the space of maps from $S$ to $M$ with the following boundary conditions
\begin{equation}
  \label{eq:boundary_disc_2_punctures}
  \begin{cases}  u(z)  \in \psi^{\rho_{S}(z)} L_1 & \textrm{if $z \in \partial S$ lies between $\xi_1$ and $\xi_2$} \\
  u(z)  \in \psi^{\rho_{S}(z)} L_2 & \textrm{if $z \in \partial S$ lies between $\xi_2$ and $\xi_0$} \\
  u(z)  \in \psi^{\rho_{S}(z)} L_0 & \textrm{if $z \in \partial S$ lies between $\xi_1$ and $\xi_0$} \\
\lim_{s \to \pm \infty} u \circ \epsilon^{k}(s, \cdot) = \psi^{\rho_{S}(\xi^k)}x_k(\cdot) &  \textrm{for $k=0,1,2$.}
 \end{cases}
\end{equation}
and solving the differential equation
\begin{equation}
  \label{eq:dbar_pair_pants}
  \left( du - X_S \otimes \alpha_S \right)^{0,1} = 0.
\end{equation}
In words, the last condition in \eqref{eq:boundary_disc_2_punctures} says that the image of $u$ converges to $x_{1}$ and $x_2$ at the punctures corresponding to inputs, and to $  \psi^{2}x_0$  at the one corresponding to the output.

\begin{lem}
For a generic family of almost complex structures $I_S$, the moduli space $\Disc_{2}(x_0;x_1,x_2)$ is a smooth manifold of dimension
\begin{equation*}
  \deg(x_0) - \deg(x_1) - \deg(x_2)
\end{equation*}
which, for a fixed pair $(x_1,x_2)$ is empty for all but finitely many choices of chords $x_0$.    Its Gromov bordification $\Discbar_{2}(x_0;x_1,x_2)$  is a compact manifold whose boundary is covered by the codimension $1$ strata
\begin{multline}
  \label{eq:cover_boundary_disc_2}
 \coprod_{y \in \Chord(L_0,L_1)}  \Disc_{2}(x_0;y,x_2) \times \Disc(y;x_1)  \cup   \coprod_{y \in \Chord(L_1,L_2)}  \Disc_{2}(x_0;x_1,y) \times \Disc(y;x_2) \\  \cup   \coprod_{y \in \Chord(L_0,L_2)}  \Disc(x_0;y) \times \Disc_{2}(y;x_1,x_2) .
\end{multline}
\end{lem}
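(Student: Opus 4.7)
The proof follows the standard four-step template for moduli spaces of pseudoholomorphic maps with Floer-type perturbations: transversality, index/dimension, a priori compactness, and description of the Gromov boundary.

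For transversality, I would fix the auxiliary data $(\rho_S, \alpha_S, \epsilon^k, H_S)$ and vary $I_S$ within the space of maps $S \to \sJ(M)$ that agree with $I_t$ on the strip-like ends. Non-degeneracy \eqref{eq:non-degenerate_chord} of the asymptotic chords makes the linearisation of \eqref{eq:dbar_pair_pants} a Fredholm operator, and the unique continuation argument of \cite{FHS} shows that the universal moduli space is cut out transversally, since one may perturb $I_S$ freely on an open subset of the interior of $S$; Sard-Smale then furnishes a comeager set of regular $I_S$. The stated dimension is the Fredholm index of a Cauchy-Riemann operator on a disc with three strip-like ends and Lagrangian boundary conditions; index additivity, combined with the Maslov grading conventions for chords, yields $\deg(x_0)-\deg(x_1)-\deg(x_2)$.

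Next I would establish the a priori bound at infinity. The rescaled Lagrangians $\psi^{\rho_S(z)} L_k$ remain Legendrian at infinity by \eqref{eq:boundary_Legendrian}, and the interplay between $\alpha_S$ and $H_S$ together with Lemma \ref{lem:rescaled_function_admissible} ensures admissibility of the asymptotic data at each puncture. Choosing $R$ large enough that $\{R\}\times\partial M$ separates all three asymptotic chords from infinity, one applies Lemma \ref{lem:stay_in_compact_set} to $u^{-1}([R,\infty)\times\partial M)$: the contact-type condition \eqref{eq:contact_type_complex} on $I_S$, combined with $\alpha_S$ being closed with vanishing restriction to $\partial S$, forces $u$ to stay in a compact region. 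Finiteness of outputs for fixed $(x_1,x_2)$ then follows by the action-estimate argument used in the proof of Lemma \ref{lem:finite_number_outputs_lag}, since the action of $x_0$ is bounded in terms of those of $x_1,x_2$ and the primitives $f_{L_k}$, and only finitely many chords fit in a fixed compact region with bounded action.

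Finally, for the Gromov bordification, exactness of $\omega$ and of each Lagrangian rules out both disc and sphere bubbling: any such bubble would carry positive geometric energy inside a configuration whose total topological action is zero, a contradiction. The only source of non-compactness is therefore strip-breaking at one of the three punctures, producing precisely the three families of codimension-$1$ boundary strata listed in \eqref{eq:cover_boundary_disc_2}; the manifold-with-boundary structure near each of them is provided by the same standard gluing argument underlying Lemma \ref{lem:compactification_strip_manifold}, and only the $0$- and $1$-dimensional cases are needed downstream. The main obstacle, in my view, is the a priori bound at infinity: because the Hamiltonian term, the weighting $\alpha_S$, and the Lagrangian labels are all tied together by the rescaling $\rho_S$, one must check carefully that the compatibility \eqref{eq:condition_arc_labelling} together with Lemma \ref{lem:rescaled_function_admissible} places the geometry on the cylindrical end into the exact form required by the hypotheses of Lemma \ref{lem:stay_in_compact_set}.
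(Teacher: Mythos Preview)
Your proposal is correct and follows essentially the same approach as the paper: transversality via Sard--Smale, the $C^0$-bound at infinity via Lemma~\ref{lem:stay_in_compact_set} applied to $u^{-1}([R,\infty)\times\partial M)$, finiteness of outputs by the argument of Lemma~\ref{lem:finite_number_outputs_lag}, and the boundary description from strip-breaking at the three ends. The paper's own proof is terser---it omits the transversality and bubbling discussion entirely and simply notes that the pullback of \eqref{eq:dbar_pair_pants} under each $\epsilon^k$ recovers the strip equation for the appropriate Floer data, which is what identifies the boundary strata---but the content is the same.
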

\begin{proof}
We omit the proof of transversality, which follows from a standard Sard-Smale argument.  Since the pullback of Equation \eqref{eq:dbar_pair_pants} under $\epsilon^k$ is Equation \eqref{eq:dbar_strip} for Floer data $(H,I_t)$ whenever $k = 1,2$, and Floer data $(  \frac{H}{2} \circ  \psi^{2} ,  (\psi^{2})^{*}I_t )$ whenever $k=0$, we conclude  that  $\Discbar_{2}(x_0;x_1,x_2)$ is indeed obtained by adding the strata in Equation \eqref{eq:cover_boundary_disc_2} to the moduli space of solutions of Equation \eqref{eq:dbar_pair_pants}.

The fact $\Discbar_{2}(x_0;x_1,x_2) $ is empty for all but finitely many choices of $x_0$ follows from Lemma \ref{lem:stay_in_compact_set} applied to $S= u^{-1}([R,+\infty) \times \partial M)$ where $R$ is chosen so that $\{ R \} \times \partial M$ separates the inputs form infinity (this is the same argument which proves Lemma \ref{lem:finite_number_outputs_lag}).    This argument also shows that  all elements of $\Disc_{2}(x_0;x_1,x_2)$  have images contained in a compact subset of $M$, which allows us to apply the usual proof of Gromov compactness.
\end{proof}

We conclude that whenever $  \deg(x_0)  = \deg(x_1) + \deg(x_2) $, the elements of $\Disc_{2}(x_0;x_1,x_2)$ are rigid.  By Equation \eqref{eq:orientation_higher_product}, every such element induces an isomorphism
\begin{equation*}
  o_{x_2} \otimes o_{x_1} \to o_{x_0},
\end{equation*}
and hence a map $\mu_u$  on orientation lines.   The cup product in the wrapped Fukaya category is obtained by taking the sum of all such maps
 \begin{align*}
\mu^2 \co CW^{*}(L_1, L_2)  \otimes CW^{*}(L_0,L_1 ) & \to   CW^{*}( L_0, L_2)  \\
\mu^2([x_2],[x_1]) & = \sum_{ \stackrel{ \deg(x_0)  = \deg(x_1) + \deg(x_2) }{ u \in \Disc_{2}(x_0;x_1,x_2)} } (-1)^{\deg(x_1)}  \mu_u([x_2],[x_1])
\end{align*}
Since the signs are exactly the same as in \cite{seidel-book} and \cite{abouzaid-seidel}, we omit the proof of their validity.

\subsection{A coproduct on the wrapped category} \label{sec:copr-wrapp-categ}
In this section, we define a degree $n$ chain map
\begin{equation} \label{eq:first_term_coproduct}
  \Delta^{0|1|0} \co  CW^{*}(L_{|}, L)  \to  \bigoplus_{p+q=* +n}  CW^{p}(K, L )  \otimes   CW^{q}(L_{|}, K) ,
\end{equation}
where the differential on the left hand side is $- \mu^{1}$, while the one on the right hand side is given by
\begin{equation}
  \label{eq:differential_tensor_over_k}
  a_{-1} \otimes a_0 \mapsto (-1)^{\deg(a_0)+1} \mu^{1}(a_{-1}) \otimes a_0  - a_{-1} \otimes  \mu^{1}(a_{0}).
\end{equation}

We write $T$ for the punctured surface obtained by removing $3$ points from the boundary of $D^2$; the three marked points, ordered counter clockwise will now be denoted $(\xi^{-1}, \xi^{0}, \underline{\xi})$.   If we were only giving a sketch of the construction, we would say that $\Delta^{0|1|0}$ counts  holomorphic maps from $T$ to $M$ with the boundary condition as shown at the bottom right of Figure \ref{fig:Cardy}.

In order to write the precise equation whose solutions we count, fix a map $\rho_{T}$ from the boundary of $D^2$ to the interval  $[1,1/2]$   such that
\begin{equation}
  \label{eq:condition_arc_labelling_coproduct}
  \parbox{32em}{$\rho_T(z) = 1$ if $z$ is near $\underline{\xi}$ and $\rho_T(z) = 1/2$ if $z$ is near $\xi_k$.} 
\end{equation}
Choose a positive strip-like end $\underline{\epsilon}$ at $\underline{\xi}$, and negative strip-like ends  $  \epsilon^k $ at the other marked points, as well as a closed $1$-form $\alpha_{T}$  whose restriction to the boundary vanishes, and whose pullback under $\underline{\epsilon}$ agrees with $dt$ while the pullbacks under the other ends agree with $\frac{dt}{2}$.   Finally, choose maps
\begin{align*}
  I_{T} & \co T \to \sJ(M)  \\
 H_{T} & \co T \to \sH(M)
\end{align*}
whose compositions with $\underline{\epsilon}$ agree with $I_{t}$ and $H$, while the compositions with $ \epsilon^{k}$ agree with  $  (\psi^{1/2}) ^{*}  I_{t} $ and  $ 4 (\psi^{1/2}) ^{*}H $.  We write $X_{T}$ for the Hamiltonian flow of $H_T$.

Given time-$1$ chords  $ x_{-1} \in \Chord(L_{|},K)$,  $x_0 \in \Chord(K,L)$ and $\underline{x} \in \Chord(L_{|},L)$,  we write $\Disc_{0|1|0}(x_{-1},x_0;\underline{x})$ for the space of maps from $T$ to $M$ with the following boundary and asymptotic conditions (keep in mind the idealisation of the boundary conditions, if $L=L_{|}$,  shown in Figure \ref{fig:Cardy} )
\begin{equation}
  \label{eq:boundary_disc_2_outputs}
  \begin{cases}  u(z)  \in \psi^{\rho_{S}(z)}  K & \textrm{if $z \in \partial T$ lies between $\xi^{-1}$ and $\xi^0$} \\
  u(z)  \in   \psi^{\rho_{S}(z)} L_{|} & \textrm{if $z \in \partial T$ lies between $\xi^0$ and $\underline{\xi}$} \\
  u(z)  \in  \psi^{\rho_{S}(z)}  L & \textrm{if $z \in \partial T$ lies between $\underline{\xi}$ and $\xi^{-1}$} \\
\lim_{s \to \pm \infty} u \circ \epsilon^{k}(s, \cdot) =  \psi^{\rho_{S}(z)} x_k(\cdot) &  \textrm{for $k=-1,0$} \\
\lim_{s \to \pm \infty} u \circ \underline{\epsilon}(s, \cdot) = \psi^{\rho_{S}(z)}  \underline{x}(\cdot) &
 \end{cases}
\end{equation}
and solving the differential equation
\begin{equation}
  \label{eq:dbar_pair_pants_coprod}
  \left( du - X_{T} \otimes \alpha_T  \right)^{0,1} = 0.
\end{equation}

The standard transversality and compactness theorems hold with the same proofs as for the moduli space $\Disc_{2}(x_0;x_1,x_2)$ so that whenever $  \deg(\underline{x})  +n = \deg(x_{-1}) + \deg(x_0) $ this moduli space is rigid.    Fixing an orientation for $K$, Equation \eqref{eq:orientation_coproduct} yields an isomorphism
\begin{equation*}
  o_{\underline{x}} \to   o_{x_{-1}}  \otimes  o_{x_0}
\end{equation*}
associated to every map $u \in \Disc_{0|1|0}(x_{-1},x_0;\underline{x})$.  We write $\Delta^{0|1|0}_u$  for the induced map on orientation lines. The energy estimate of Lemma \ref{lem:stay_in_compact_set} implies that for a fixed input $\underline{x}$, there are only finitely many possible outputs.  We obtain Equation \eqref{eq:first_term_coproduct} as a sum
\begin{equation}
  \label{eq:coproduct_first_term}
 \Delta^{0|1|0} =  \sum  \Delta^{0|1|0}_u
\end{equation}
 of contributions over all such discs in rigid moduli spaces $\Disc_{0|1|0}(x_{-1},x_0;\underline{x})$.  Note that there are no signs in this expression.  
\begin{lem} \label{lem:Delta_chain_map}
$ \Delta^{0|1|0}  $ is a chain map of degree $n$.
\end{lem}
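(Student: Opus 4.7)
The plan is to derive the chain map identity by analyzing the boundary of the one-dimensional components of the moduli space $\Disc_{0|1|0}(x_{-1}, x_0; \underline{x})$, exactly in parallel with the proof that $\mu^1$ squares to zero and that $\mu^2$ is a chain map.

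First, the degree shift is immediate from the dimension formula for $\Disc_{0|1|0}$: rigid elements occur precisely when $\deg(\underline{x}) + n = \deg(x_{-1}) + \deg(x_0)$, so the summand $CW^p(K,L) \otimes CW^q(L_{|},K)$ with $p+q = \deg(\underline{x}) + n$ receives contributions, as claimed in the display \eqref{eq:first_term_coproduct}. Next, I would specialize to inputs $\underline{x}$ and outputs $(x_{-1}, x_0)$ for which the virtual dimension is $1$. The Gromov bordification $\overline{\Disc}_{0|1|0}(x_{-1}, x_0; \underline{x})$ is then a compact one-manifold: compactness of the images follows by applying the energy estimate cited in the proof of the analogous statement for $\Disc_2$ (Lemma \ref{lem:stay_in_compact_set}), the maximum principle along the collar keeps curves away from infinity, and exactness rules out sphere and disc bubbling.

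The boundary of $\overline{\Disc}_{0|1|0}(x_{-1}, x_0; \underline{x})$ is covered by three types of codimension-one strata, one for each puncture where a Floer strip can break off:
\begin{equation*}
\coprod_{y} \Disc_{0|1|0}(x_{-1}, x_0; y) \times \Disc(y; \underline{x}) \;\cup\; \coprod_{y} \Disc(x_{-1}; y) \times \Disc_{0|1|0}(y, x_0; \underline{x}) \;\cup\; \coprod_{y} \Disc(x_0; y) \times \Disc_{0|1|0}(x_{-1}, y; \underline{x}).
\end{equation*}
The first family counts the composition $\Delta^{0|1|0} \circ \mu^1$ applied to $\underline{x}$; the second and third families respectively produce $\mu^1(x_{-1}) \otimes x_0$ and $x_{-1} \otimes \mu^1(x_0)$ in the tensor product complex, i.e.\ the two terms of the differential \eqref{eq:differential_tensor_over_k}. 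Since the signed count of boundary points of a compact one-manifold is zero, the sum of these three contributions vanishes, which is the desired degree-$n$ chain map identity $d_{\otimes} \Delta^{0|1|0} + (-1)^n \Delta^{0|1|0} \mu^1 = 0$ (recalling that the differential on the source is $-\mu^1$).

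The main obstacle is the sign verification. The relevant signs come from three sources: the sign $(-1)^i$ in the definition of $\mu^1$, the factor $(-1)^{\deg(x_0)+1}$ appearing in \eqref{eq:differential_tensor_over_k}, and the discrepancy between the natural boundary orientation on $\overline{\Disc}_{0|1|0}$ at each of the three strata and the product orientation coming from the isomorphisms \eqref{eq:orientation_differential_HW} and \eqref{eq:orientation_coproduct}. I would follow the same strategy the paper indicates it uses elsewhere: fix the orientation conventions from Section \ref{sec:coproduct-as-map}'s orientation formulas and the $A_\infty$ bimodule conventions, compute the sign at each of the three boundary strata by the standard gluing-orientation comparison, and match them to the three terms on the algebraic side. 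This is the only delicate piece; the underlying geometric input is the usual ``boundary of one-dimensional moduli space'' argument, and transversality and compactness for $\overline{\Disc}_{0|1|0}$ are inherited verbatim from the discussion of $\overline{\Disc}_2$ preceding the lemma.
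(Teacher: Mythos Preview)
Your proposal is correct and follows essentially the same route as the paper's proof: identify the three boundary strata of the one-dimensional moduli space $\overline{\Disc}_{0|1|0}(x_{-1},x_0;\underline{x})$, match them to $\Delta^{0|1|0}\circ(-\mu^1)$ and the two summands of $d_\otimes\circ\Delta^{0|1|0}$, and invoke the vanishing of the signed boundary count. The only difference is that the paper actually carries out the sign comparison you defer---it computes, via \eqref{eq:orientation_differential_HW} and \eqref{eq:orientation_coproduct}, that the three strata contribute with sign discrepancies $\deg(\underline{x})$, $\deg(\underline{x})+n+1$, and $\deg(\underline{x})+n+1$ respectively (tracking whether $\partial_s$ glues to an inward or outward normal), which is precisely what verifies the identity $d_\otimes\Delta^{0|1|0}+(-1)^n\Delta^{0|1|0}\mu^1=0$.
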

\begin{proof}
The boundary of  $\Disc_{0|1|0}(x_{-1},x_0;\underline{x})$ is covered by the images of the products 
 \begin{align*}
&  \Disc_{0|1|0}(x_{-1},x_0;\underline{y}) \times \Disc(\underline{y}; \underline{x}) \\
&  \Disc(x_0; y_0) \times \Disc_{0|1|0}(x_{-1},y_0;\underline{x})  \\
& \Disc(x_{-1}; y_{-1}) \times \Disc_{0|1|0}(y_{-1},x_0;\underline{x}).
 \end{align*}
According to Equations \eqref{eq:orientation_differential_HW} and \eqref{eq:orientation_coproduct}, the orientations on the factors of the first stratum induce natural isomorphisms
\begin{align*}
o_{\underline{y}} &   \cong  \lambda(\Disc_{0|1|0}) \otimes  o_{x_{-1}} \otimes  o_{x_{0}} \otimes  \lambda^{-1}(L_0)  \\
o_{\underline{x}} &   \cong   \lambda^{-1}( \Disct(\underline{y};\underline{x}) )   \otimes  o_{\underline{y}} 
\end{align*}
Composing these two isomorphisms, we obtain 
\begin{equation*}
o_{\underline{x}}   \cong   \lambda^{-1}( \Disct(\underline{y};\underline{x}) )   \otimes   \lambda(\Disc_{0|1|0}) \otimes  o_{x_{-1}} \otimes  o_{x_{0}} \otimes  \lambda^{-1}(L_0)
\end{equation*}
Note that for the purpose of defining the differential, $  \lambda^{-1}( \Disct(\underline{y};\underline{x}) )   $ is trivialised using the vector $\partial_{s}$, which after gluing gives the inward pointing vector to $\Disc_{0|1|0}(x_{-1},x_0;\underline{x})  $.  We conclude that the map $\Delta^{0|1|0} \circ (-\mu^{1})$  differs from the map induced by natural boundary orientation by a sign whose parity is
\begin{equation} \label{eq:sign_Delta_chain_map-1}
  \deg(\underline{x}).
\end{equation}
Next, we perform the same analysis on the second type of boundary stratum, and find that the product orientation is given by
\begin{equation*}
 o_{\underline{x}} \cong  \lambda(\Disc_{0|1|0}) \otimes  o_{x_{-1}}   \otimes    \lambda^{-1}( \Disct(x_0;y_0) )   \otimes  o_{x_0} \otimes  \lambda^{-1}(L_0)
\end{equation*}
Rearranging the terms introduces a sign of $\deg(x_{-1})$, and the gluing parameter $\partial_{s}$ now agrees with the outward pointing vector.  We conclude that the map $- (\id  \otimes \mu^1 )  \circ \Delta^{0|1|0} $ differs from the map induced by the boundary by 
\begin{equation}\label{eq:sign_Delta_chain_map-2}
 \deg(x_{-1}) + \deg(x_{0}) + 1 =  \deg(\underline{x}) +n +1.
\end{equation}
Finally, on the third stratum, the product orientation gives an isomorphism

\begin{equation*}
o_{\underline{x}} \cong  \lambda(\Disc_{0|1|0}) \otimes    \lambda^{-1}( \Disct(x_0;y_0) )  \otimes  o_{x_{-1}}     \otimes  o_{x_0}  \otimes  \lambda^{-1}(L_0)
\end{equation*}
which exactly agrees with the map induced by the boundary orientation, so that $(-1)^{\deg(x_0)+1}(\mu^1 \otimes \id   )  \circ \Delta^{0|1|0} $ differs from the map induced by the boundary by 
\begin{equation*}
 \deg(x_{-1}) + \deg(x_{0}) + 1 =  \deg(\underline{x}) +n +1.
\end{equation*}
Comparing this expression with Equations \eqref{eq:sign_Delta_chain_map-1} and \eqref{eq:sign_Delta_chain_map-2}, we readily conclude the desired result.
\end{proof}
\section{$\infty$-refinements of the product and coproduct}
\subsection{$A_{\infty}$ structure} \label{sec:a_infty-structure}

In this section, we construct the higher products
\begin{equation*}
  \mu^d \co  CW^{*}(L_{d-1}, L_d)    \otimes \cdots \otimes    CW^{*}(L_1, L_2)  \otimes  CW^{*}(L_0,L_1 )  \to   CW^{*}( L_0, L_d)
\end{equation*}
defining the $A_{\infty}$ structure on the wrapped Fukaya category by studying parametrised moduli spaces of solutions to a family of equations analogous to Equation \eqref{eq:dbar_pair_pants}.

\begin{defin} \label{def:floer_datum_disc_1_output}
A \emph{Floer datum} $D_{S}$ on a stable disc $S \in \Discbar_{d}$ with one negative and  $d$ positive ends consists of the following choices on each component:
\begin{enumerate}
\item Strip-like ends near the marked points:  We have a parametrisation  $\epsilon^k \co Z_{\pm} \to S$  of each end $\xi^k \in \overline{S}$  whose source is $Z_-$ if $k=0$ and $Z_+$ otherwise.
\item Time shifting map:  A map $ \rho_{S} \co \partial \bar{S} \to [1,+ \infty)$ which is constant near each end.  We write $w_{k,S}$ for the value on the $k$\th end. 
\item Basic $1$-form and Hamiltonian perturbations:  A closed $1$-form $\alpha_{S}$ whose restriction to the boundary vanishes and a map $H_{S} \co S \to \sH(M)$  on each surface defining a Hamiltonian flow $X_{S}$  such that the pullback of $ X_{S} \otimes \alpha_{S} $ under $ \epsilon^k $ agrees with  $  X_{\frac{H}{w_{k,S}} \circ \psi^{w_{k,S}}} \otimes dt$.
\item Almost complex structures:  A map $I_{S} \co S \to \sJ(M)$ whose pullback under $ \epsilon^k $ agrees with  $ (\psi^{w_{k,S}})^{*} I _t$.
\end{enumerate}
\end{defin}
The condition on the basic $1$-forms and Hamiltonian perturbations can be split into two conditions: The $1$-forms should pullback to $ w_{k,S} dt$  under $ \epsilon^k$ which in particular implies that
\begin{equation*} w_{0,S} = \sum_{1 \leq k \leq d} w_{k,S} \end{equation*}
while the function $H_{S}$ should pullback to $ \frac{H \circ \psi^{w_{k,S}}}{w^{2}_{k,S}}$  which is indeed an element of $ \sH(M) $  by Lemma \ref{lem:rescaled_function_admissible}. 

Points in a codimension $k$ stratum $\sigma \subset \partial  \Discbar_{d}$ represent stable curves with $k$ nodes.   Let us choose strip-like ends on curves in lower dimensional moduli spaces varying smoothly with respect to the modulus, and which are compatible with gluings of Riemann surfaces for sufficiently large gluing parameters.   In particular, for each positive real number $R$, we obtain an element of $  \Discbar_{d} $  by removing the images of $(-\infty, -R] \times [0,1]$ and $[R, +\infty)  \times [0,1]$  for the two sides of the node, and gluing the complements to form a new Riemann surface.    By applying this construction at every node, we obtain a  chart on $\Discbar_{d} $
\begin{equation} \label{eq:corner_chart_moduli_space}
  (0,+\infty]^{k} \times \sigma \to \Discbar_{d}. 
\end{equation}
whose image is an open neighbourhood of $\sigma$, such that each coordinate corresponds to a gluing parameter.

We shall consider a notion of equivalence among Floer data which is weaker than equality.  We say that $D^1_S$ and $D^2_S $  are \emph{conformally equivalent} if there exists a constant $C$ so that $\rho^{2}_{S}$ and $\alpha_{S}^{2}$ respectively agree with   $C \rho^{1}_{S}$ and $C \alpha_{S}^{1}$, and
\begin{align*}
 I_{S}^{2} & = {\psi^{C}}^{*} I_{S}^{1} \\
H_{S}^{2} & = \frac{H_{S}^{1} \circ \psi^{C}}{C^{2} }.
 \end{align*}

\begin{defin}
A   \emph{universal and conformally consistent}  choice of Floer data for the $A_{\infty}$ structure  is a choice $\bfD_{\mu}$  of Floer data  for every integer $d \geq 2$ and every (representative) of an element of  $\Discbar_{d}$ which varies smoothly over this compactified moduli space, and whose restriction to a boundary stratum is conformally equivalent to the product of Floer data coming from lower dimensional moduli spaces.  Moreover,  in the coordinates \eqref{eq:corner_chart_moduli_space}, Floer data agree to infinite order at boundary stratum with the Floer data obtained by gluing.
\end{defin} 
This consistency condition implies that the choices  for $\partial \Discbar_{d}$  are determined by those made for lower dimensional moduli spaces up to a choice of  the conformal equivalence constant on each irreducible component.  
\begin{figure}[h]
   \centering
   \includegraphics{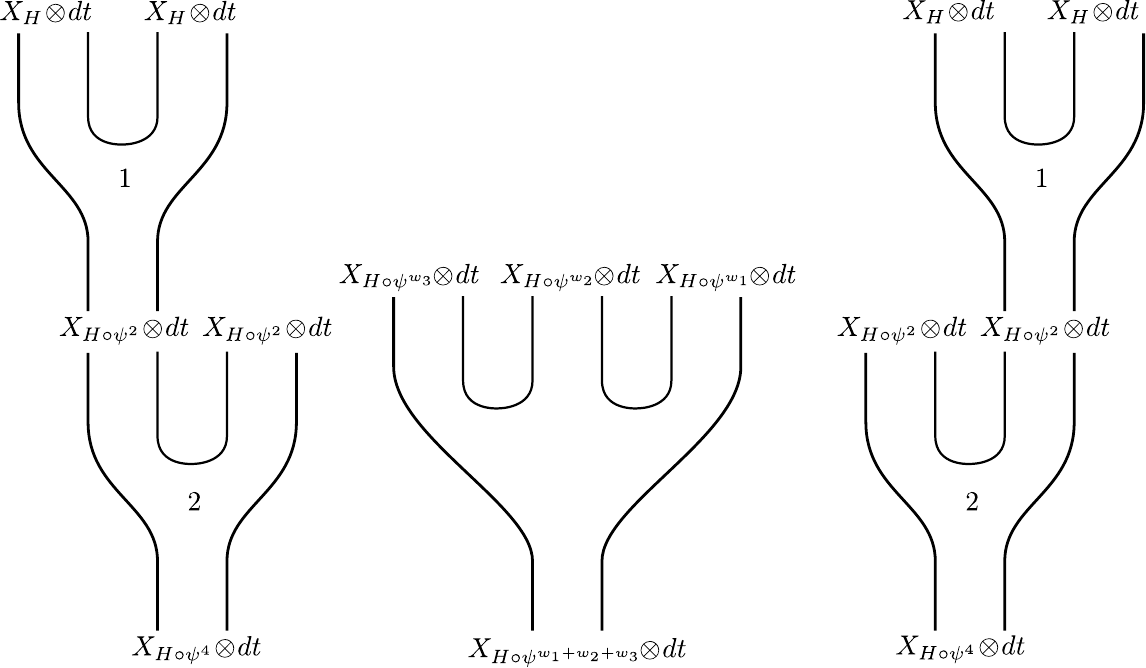}
\caption{}
\label{fig:floer_data_three_punctures}
 \end{figure}
In particular, a choice of Floer data for $d=2$ (already performed in Section \ref{sec:comp-wrapp-categ}) determines such a choice of Floer data for the two points which form the boundary of $ \Discbar_{3}$   up to some conformal scaling constant.  The simplest choice yields the scaling constant $2$ for the disc which includes the outgoing end and $1$ for the other disc,  as displayed in Figure \ref{fig:floer_data_three_punctures},  which shows the pullback of the product $X_{H_{S}} \otimes \alpha_{S}$  at various ends.  By gluing, we obtain Floer data for elements of $\Disc_{3}$ which are sufficiently close to the boundary.  We choose a perturbation of this glued data which vanishes to infinite order near the boundary, then, using the fact that the space of allowable Floer data is contractible, we choose Floer data on the remaining part of $\Disc_{3}$.  We proceed inductively using the covering of  $\partial \Discbar_{d}$ by images of codimension $1$ inclusions
\begin{equation}
\Discbar_{d_1} \times \Discbar_{d-d_1+1} \to  \partial \Discbar_{d}
\end{equation}
and conclude
\begin{lem} \label{lem:abundance_floer_data}
The restriction map from the space of universal and conformally consistent Floer data to the space of Floer data for a fixed surface $S$ is surjective.  \qed
\end{lem}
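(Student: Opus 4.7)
My approach is an induction on the number $d$ of inputs, building up the universal Floer data over $\Discbar_{d}$ in a way that keeps the restriction map surjective at every step. The essential geometric inputs are (i) the contractibility of the space $\sJ(M)$ of admissible almost complex structures and of $\sH(M)$ (which is convex since it is controlled by a fixed behavior at infinity), (ii) the affineness of the space of closed $1$-forms with prescribed behavior at strip-like ends and prescribed vanishing on the boundary, and (iii) the existence of the gluing charts \eqref{eq:corner_chart_moduli_space}, which allow us to identify a neighborhood of every boundary stratum with a product.

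For the base case $d=2$, the moduli space $\Discbar_2$ is a single point, so if the fixed surface $S$ lies in this space, the choice $D_S$ itself defines a Floer datum and there is nothing further to arrange. Inductively, suppose that for all $2 \le d' < d$ a universal and conformally consistent Floer datum has been constructed and that, if $S$ belongs to one of these lower moduli spaces, we have already arranged the restriction to be $D_S$. The consistency condition together with the choices already made determine Floer data on the open stratum of $\partial \Discbar_d$ up to the choice of one conformal scaling constant per irreducible component. Transporting this data through the gluing chart \eqref{eq:corner_chart_moduli_space} and damping the difference by a bump function that vanishes to infinite order at $\partial \Discbar_d$, we obtain a Floer datum defined in a collar neighborhood $U$ of $\partial \Discbar_d$ that satisfies the required infinite-order matching.

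To achieve surjectivity at a fixed $S \in \Discbar_d$ (when $S$ is interior to this stratum; the boundary case was handled inductively), choose a small open neighborhood $V \ni S$ disjoint from $U$, and declare the Floer datum on $V$ to be (a smooth family interpolating from $D_S$ at $S$, constant on a smaller neighborhood). The remaining region $\Discbar_d \setminus (U \cup V)$ has contractible complement in the space of admissible Floer data over it, and $\Discbar_d$ carries a smooth partition of unity subordinate to $\{U, V, \Discbar_d \setminus \overline{\text{smaller neighborhoods}}\}$. Using this partition of unity to interpolate, the convexity of $\sH(M)$-valued and closed $1$-form components and the contractibility of $\sJ(M)$-valued maps produce a global Floer datum on $\Discbar_d$ that restricts to the prescribed data near $\partial \Discbar_d$, equals $D_S$ at $S$, and varies smoothly.

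The main obstacle is really the bookkeeping in the inductive step: one must ensure simultaneously that the infinite-order matching along $\partial \Discbar_d$ is compatible with the conformal equivalences dictated by the glued surfaces, that the interpolation near $S$ does not interfere with the boundary behavior, and that the Hamiltonian and $1$-form components satisfy the correlated conditions ($\alpha_S$ pulling back to $w_{k,S}\, dt$ and $H_S$ pulling back to $H \circ \psi^{w_{k,S}}/w_{k,S}^2$) at every end. All three issues are resolved by the fact that the conditions on the ends are enforced only through the strip-like end parametrization and hold to infinite order after damping, so the only genuine freedom in the interior is a contractible, convex space of choices; the restriction map is then surjective because evaluation of a section of a bundle with contractible, convex fibers at a single point is so.
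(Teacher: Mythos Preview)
Your proposal is correct and follows essentially the same inductive strategy as the paper: the paper's argument is the paragraph immediately preceding the lemma (the \qed indicates no further proof is given), and you have simply spelled out the details of that induction---fixing the data on $\partial\Discbar_d$ from lower strata up to conformal constants, pushing it inward via the gluing charts \eqref{eq:corner_chart_moduli_space} with infinite-order damping, and then filling in the interior using contractibility of the space of allowable Floer data.

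One minor point of phrasing: a partition-of-unity interpolation is literally correct for the $\sH(M)$-valued and closed-$1$-form components (affine/convex), but for the $\sJ(M)$-valued component you should instead invoke the extension property that follows from contractibility of the fibers (a section defined on a closed subset extends over the whole base). You already distinguish convexity from contractibility in your wording, so this is only a matter of not leaning on the phrase ``partition of unity'' for the almost-complex-structure part.
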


Let $L_0, \ldots,  L_{d}$ denote objects of $\Wrap$, and consider a sequence of chords $\vx = \{ x_{k} \in \Chord(L_{k-1},L_k) \} $ if $1 \leq k \leq d  $ and $x_0 \in \Chord(L_0,L_d)$.  Given universal Floer data $\bfD_{\mu}$, we write $\Disc_{d}(x_0; \vx) $ for the space of maps $u \co S \to  M$ whose source is an arbitrary element $S \in \Disc_{d}$ with marked points $(\xi^0, \ldots, \xi^{d}) $ and which satisfy the boundary and asymptotic conditions
\begin{equation} \label{eq:boundary_condition_disc_A_infty}
\begin{cases}
u(z) \in \psi^{\rho_{S}(z)} L_k & \textrm{if $z \in \partial S$ lies between $\xi^k$ and $\xi^{k+1}$} \\
\lim_{s \to \pm \infty} u \circ \epsilon^{k}(s, \cdot) =  \psi^{\rho_{S}(z)} x_k &
\end{cases}
\end{equation}
and the differential equation
\begin{equation}
  \label{eq:dbar_disc_1_output}
\left(du - X_{S} \otimes \alpha_{S}\right)^{0,1} = 0
\end{equation}
where the $(0,1)$ part is taken with respect to the $S$-dependent almost complex structure.

The consistency condition imposed on  $\bfD_{\mu}$ implies that the Gromov bordification $\Discbar_{d}(x_0; \vx) $  is obtained by adding the images of natural inclusions
\begin{equation}
  \label{eq:codim_1_strata_discs_1_output}
  \Discbar_{d_1}(x_0; \vx[1])  \times   \Discbar_{d_2}(y; \vx[2]) \to  \Discbar_{d}(x_0; \vx)
\end{equation}
where $y$ agrees with one of the elements of $\vx[1] $, and the sequence $\vx$ is obtained by removing $y$  from $\vx[1]$ and replacing it by the sequence $\vx[2]$.

Applying Lemma \ref{lem:stay_in_compact_set} to achieve compactness, the standard Sard-Smale argument to achieve transversality (see for example Section (9k) of \cite{seidel-book}), and the index theorem to compute expected dimensions, we conclude:
\begin{lem}
The moduli spaces $\Discbar_{d}(x_0; \vx) $  are compact.  In addition, for a generic choice  $\bfD_{\mu}$, they form manifolds of dimension
\begin{equation*}
  \deg(x_0) + d - 2 - \sum_{1 \leq k \leq d} \deg(x_k).  
\end{equation*}
\qed
\end{lem}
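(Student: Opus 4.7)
The plan is to establish compactness first, then apply the standard parametric transversality framework, and finally compute the Fredholm index.

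For compactness, the essential point is to rule out escape to infinity. The construction in Definition~\ref{def:floer_datum_disc_1_output} was tailored precisely so that near each strip-like end $\epsilon^k$ the equation pulls back to a Floer equation of the same shape as \eqref{eq:dbar_strip} after the conformal rescaling by $w_{k,S}$, and moreover $I_S$ is of contact type with $H_S \in \sH(M)$ on all of $S$. I would invoke Lemma~\ref{lem:stay_in_compact_set} on $S = u^{-1}([R,+\infty)\times \partial M)$ for $R$ chosen large enough that $\{R\}\times \partial M$ separates every asymptotic chord (including the output) from infinity, exactly as in the proof of Lemma~\ref{lem:finite_number_outputs_lag}. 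This confines the image of $u$ to a fixed compact subset of $M$ and simultaneously shows that only finitely many choices of $x_0$ yield a non-empty moduli space for fixed inputs. Exactness of $\omega$ and of the Lagrangians then rules out sphere and disc bubbling by the usual energy accounting, so the remaining degenerations are strip-breaking at the punctures and nodal degeneration of the underlying disc. The conformal consistency of $\bfD_\mu$ near $\partial \Discbar_d$ matches these degenerations with the product Floer data, giving the codimension-one strata \eqref{eq:codim_1_strata_discs_1_output}.

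For transversality I would follow the Sard--Smale argument of Section~(9k) of \cite{seidel-book}. Form the universal moduli space fibred over the space of universal and conformally consistent Floer data, whose surjectivity onto the space of Floer data on a fixed surface is the content of Lemma~\ref{lem:abundance_floer_data}. The linearisation of Equation~\eqref{eq:dbar_disc_1_output} together with variations of $(H_S,I_S)$ is surjective at every solution provided $u$ admits an interior somewhere injective point, which it does by unique continuation (a non-constant Floer solution with strip-like ends cannot be locally constant on any open set of the domain). Projecting down and applying Sard's theorem yields a comeagre set of $\bfD_\mu$ for which the moduli spaces are smooth manifolds.

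The dimension is obtained from the Riemann--Roch index for the Cauchy--Riemann operator on a disc with Lagrangian boundary conditions and $d+1$ strip-like ends, which under the Maslov grading conventions of Section~\ref{sec:geom-prel} contributes $\deg(x_0) - \sum_{k=1}^{d} \deg(x_k)$, combined with the $(d-2)$-dimensional space of conformal structures on $S \in \Disc_d$. I expect the main obstacle to be purely bookkeeping rather than conceptual: carefully checking that the Floer datum on every strip-like end — in particular the weighting $w_{k,S}$ and the fact that the rescaled Hamiltonian $\tfrac{H}{w_{k,S}^{2}}\circ \psi^{w_{k,S}}$ still lies in $\sH(M)$ by Lemma~\ref{lem:rescaled_function_admissible} — are in the precise form for which the maximum-principle input to Lemma~\ref{lem:stay_in_compact_set} was established. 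Once that matching is verified end-by-end, both the compactness estimate and the applicability of the gluing theorem feeding into the description of the codimension-one boundary follow without further incident.
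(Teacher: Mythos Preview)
Your proposal is correct and follows essentially the same approach as the paper, which treats this lemma as standard: the paper's entire argument is the one-line remark preceding the statement that one applies Lemma~\ref{lem:stay_in_compact_set} for compactness, the Sard--Smale argument of Section~(9k) of \cite{seidel-book} for transversality, and the index theorem for the dimension. Your elaboration of each of these ingredients is accurate; the only minor expositional wrinkle is that to deduce finiteness of possible outputs $x_0$ one chooses $R$ separating only the \emph{inputs} from infinity (as in the proof you cite of Lemma~\ref{lem:finite_number_outputs_lag}), whereas to confine the image of $u$ for a fixed $x_0$ one may take $R$ separating all asymptotic chords --- but this does not affect the validity of your argument.
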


Assuming that $\deg(x_0) =  2 - d + \sum_{1 \leq k \leq d} \deg(x^k)  $, we conclude that the elements of  $\Disc_{d}(x_0; \vx) $ are rigid.  Choosing the orientation of $ \Disc_{d} $ fixed in Appendix \ref{sec:stash-polyh-infty}, we obtain a canonical homotopy class of isomorphisms 
\begin{equation}
  o_{x_d} \otimes \cdots \otimes o_{x_1} \to o_{x_0}
\end{equation}
from Equation \eqref{eq:orientation_higher_product}. Writing $\mu_{u}$ for the induced map on orientation lines, we define  an operation
\begin{equation*}
  \mu^d \co  CW^{*}(L_{d-1}, L_d)    \otimes \cdots \otimes    CW^{*}(L_1, L_2)  \otimes  CW^{*}(L_0,L_1 )  \to   CW^{*}( L_0, L_d)
\end{equation*}
called the $d$\th higher product as a sum
\begin{equation}
  \mu^{d}([x_d], \ldots, [x_1])  = \sum_{\stackrel{\deg(x_0) =  2 - d + \sum_{1 \leq k \leq d} \deg(x_k)}{u \in \Disc_{d}(x_0; \vx) }} (-1)^{\dagger} \mu_{u}( [x_d], \ldots, [x_1])
\end{equation}
where the sign is given by
\begin{equation} \label{eq:dagger_sign}
 \dagger = \sum_{k=1}^{d} k \deg(x_k).
\end{equation}

\begin{prop} \label{prop:a_infty_structure}
The sum of quadratic compositions of the higher products
\begin{equation}
  \label{eq:a_infty_property}
   \sum_{\stackrel{d_1 + d_2 = d +1}{0 \leq k <d_1}} (-1)^{\maltese_{1}^{k}} \mu^{d_1}\left(x_d, \ldots, x_{k+d_2+1}, \mu^{d_2}(x_{k+d_2}, \ldots, x_{k+1}) ,   x_k, \ldots , x_1 \right) = 0 
\end{equation}
vanishes, where the value of the sign is given by
\begin{equation*} \maltese_{1}^{k} = \sum_{1 \leq j \leq k} ||x_j||. \end{equation*}
and $||x_j|| = \deg(x_j) +1$ is the reduced degree.  In particular, the operations $\mu^d$ form an $A_{\infty}$ structure. \qed
\end{prop}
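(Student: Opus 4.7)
The plan is to deduce \eqref{eq:a_infty_property} from the fact that the signed count of boundary points of a compact oriented $1$-manifold vanishes. Fix objects $L_0, \ldots, L_d$ of $\Wrap$ and chords $\vx$, $x_0$ with
\[
  \deg(x_0) = 3 - d + \sum_{k=1}^{d} \deg(x_k),
\]
so that $\Disc_d(x_0; \vx)$ is one-dimensional. Transversality together with the $C^{0}$-estimate of Lemma \ref{lem:stay_in_compact_set} imply that $\Discbar_d(x_0; \vx)$ is a compact $1$-manifold with boundary, so its total oriented boundary count vanishes.

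The consistency condition imposed on $\bfD_{\mu}$ ensures that this boundary is covered by the codimension-$1$ strata of \eqref{eq:codim_1_strata_discs_1_output}, namely products
\[
  \Disc_{d_1}(x_0; \vx[1]) \times \Disc_{d_2}(y; \vx[2])
\]
with $d_1 + d_2 = d+1$, where the sequence $\vx[2] = (x_{k+d_2}, \ldots, x_{k+1})$ is inserted in place of the chord $y$ at position $k+1$ of $\vx[1]$. For such a product to contribute to the boundary of a $1$-manifold, both factors must themselves be rigid, and their count then reproduces exactly the coefficient of $[x_0]$ in one of the compositions $\mu^{d_1}(\ldots, \mu^{d_2}(\ldots), \ldots)$ appearing in \eqref{eq:a_infty_property}. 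Summing over all boundary strata yields the desired relation, up to signs.

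The main obstacle, and the only nontrivial content of the argument, is the sign verification. For each boundary stratum one must check that the boundary orientation on $\Disc_{d_1} \times \Disc_{d_2}$ inherited from the orientation on $\Discbar_d$ fixed in Appendix \ref{sec:stash-polyh-infty} differs from the product orientation -- obtained by iterating the isomorphism \eqref{eq:orientation_higher_product} on each factor and absorbing the internal signs $(-1)^{\dagger}$ from the definitions of $\mu^{d_1}$ and $\mu^{d_2}$ -- precisely by $(-1)^{\maltese_1^k}$. The analysis follows the same template as in the proof of Lemma \ref{lem:Delta_chain_map}: the gluing parameter at the node corresponds to the outward-pointing normal to the boundary stratum and to the vector $\partial_{s}$ trivializing the broken-strip direction, while permuting the orientation line $o_y$ of the intermediate chord past the orientation lines $o_{x_j}$ for $j \le k$ produces Koszul signs involving the reduced degrees $\|x_j\|$. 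The orientation conventions in Appendix \ref{sec:stash-polyh-infty} and the signs $\dagger$ in the definition of $\mu^d$ are chosen so that the accumulated sign comes out to $(-1)^{\maltese_1^k}$, and so the detailed verification reduces to a bookkeeping check identical to the one carried out in \cite{seidel-book} and \cite{abouzaid-seidel}.
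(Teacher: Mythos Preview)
Your proposal is correct and follows exactly the standard argument the paper is implicitly invoking: the proposition is stated with a \qed and no proof, so the paper simply takes the $A_\infty$ relations as a consequence of the boundary decomposition \eqref{eq:codim_1_strata_discs_1_output} together with the sign conventions borrowed from \cite{seidel-book} and \cite{abouzaid-seidel}. Your outline of the one-dimensional moduli space argument, the compactness input from Lemma \ref{lem:stay_in_compact_set}, and the deferral of the sign bookkeeping to those references is precisely what the author has in mind.
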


\subsection{The coproduct as a map of bimodules}\label{sec:coproduct-as-map}
Let $\cB$ be a full subcategory of $\Wrap$.   Recall that a bimodule $\cP$ over $\cB$ is a collection of graded abelian groups $\cP(L_{|},L)$ for all pairs of objects of $\cB$,  equipped with operations
\begin{multline*}
  \mu^{r|1|s} \co CW^{*}(L_{r-1}, L_r) \otimes \cdots \otimes CW^{*}(L_0, L_1) \otimes \\ \cP(L_{|0}, L_{0}) \otimes  CW^{*}( L_{|1}, L_{|0}) \otimes \cdots \otimes CW^{*}(L_{|s}, L_{|s-1}) \to  \cP(L_{|s}, L_r) 
\end{multline*}
satisfying the quadratic equation
\begin{multline*}
 \sum (-1)^{\maltese_{|s}^{|\ell+1}}   \mu^{r-m|1|s-\ell}(a_r, \ldots, a_{m+1}, \mu^{m|1|\ell}(a_m, \ldots, \underline{p}, \ldots, a_{|\ell}), a_{|\ell+1}, \ldots, a_{|s}  ) \\
+ \sum  (-1)^{\maltese_{|s}^{|\ell+1}}   \mu^{r|1|s-\ell + k +1 }(a_r, \ldots, \underline{p}, \ldots, a_{|k}, \mu^{\ell-k}(a_{|k+1}, \ldots, a_{|\ell}), a_{|\ell+1}, \ldots, a_{|s}  )  \\
+ \sum (-1)^{\maltese_{|s}^{k}}   \mu^{r-m +k  + 1 |1|s}(a_r, \ldots, a_{m+1}, \mu^{m-k}(a_m, \ldots, a_{k+1}), a_k,  \ldots,  \underline{p}, \ldots,  a_{|s}  ) = 0 
\end{multline*}
where the signs are given by
\begin{align*}
\maltese_{|s}^{|\ell+1} & = \sum_{j=\ell+1}^{s} || a_{|j} || \\
\maltese_{|s}^{k} & = \deg(\underline{p})  +  \sum_{1 \leq j \leq s} ||a_{|j}|| + \sum_{1 \leq j \leq k}||a_k||.
 \end{align*}
\begin{rem}
To make sense of the notation, consider the ordering $ |s < |s-1 < \ldots < |1 < \_  < 1 < \ldots < r$, and assign the elements of $CW^*(L_k, L_{k+1})$ or  $CW^*(L_{|k}, L_{|k-1})$ their reduced degree, while $\underline{p}$ is assigned its usual degree.  With this in mind, $\maltese_{*}^{**}$ is the sum of the degrees of elements between $*$ and $**$.  For more detail, the reader may consult \cite{seidel-bimodules} whose conventions we have adopted up to reversing the order of the inputs, and other minor changes of notation.
\end{rem}

 Via the Yoneda embedding, any object $K$ of $\Wrap$ defines a left and a right module over $\cB$ which we shall denote respectively $\cY^l_{K}$ and $ \cY^r_{K} $, and which associate to any object $L$  the graded abelian groups
\begin{align}
  \cY^l_{K} (L)  & = CW^{*}(K,L)  \\
  \cY^r_{K} (L)  & = CW^{*}(L,K).
\end{align}

In particular the tensor product  $ \cY^l_{K} \otimes \cY^{r}_{K}$ is an $A_{\infty}$-bimodule over $\cB$.  The differential is given by Equation \eqref{eq:differential_tensor_over_k}, while the higher operations vanish unless either $r$ or $s$ are equal to $0$, in which case we find that 
\begin{align*}
  \label{eq:bimodule_action_tensor}
\mu^{0|1|s}(p \otimes q, a_{|1}. \ldots, a_{|s}) &  = (-1)^{\maltese_{|s}^{|1} +1} p \otimes \mu^{s+1}(q, a_{|1}. \ldots, a_{|s})   \\
 \mu^{r|1|0}(a_r, \ldots, a_1, p \otimes q) &  =(-1)^{\deg(q) +1}   \mu^{r+1}(    a_r, \ldots, a_1, p) \otimes q.
\end{align*}

The main result we shall prove in this section  relates $ \cY^l_{K} \otimes_{\bfk} \cY^{r}_{K}$ to the diagonal bimodule $\cB$ with operations $\mu^{r|1|s} = (-1)^{\maltese_{|s}^{|1} +1} \mu^{r+s+1} $:
\begin{prop}  \label{prop:map_bimodules}
The  map $\Delta^{0|1|0}$ extends to a degree $n$ homomorphism of $A_{\infty}$ bimodules
\begin{equation}
  \label{eq:bimodule_map}
 \Delta \co  \cB \to  \cY^l_{K} \otimes \cY^{r}_{K}.
\end{equation}
\end{prop}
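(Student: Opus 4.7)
The plan is to define the higher components $\Delta^{r|1|s}$ by counting rigid pseudo-holomorphic maps from a moduli space of discs $\Disc_{r|1|s}$ generalising the surface $T$ used for $\Delta^{0|1|0}$.  A representative carries $r+s+3$ boundary marked points in cyclic order: a distinguished positive puncture $\underline{\xi}$, two negative punctures $\xi^{-1}$ and $\xi^0$ with the boundary arc between them labelled by $K$, and $r$ (resp.\ $s$) additional positive punctures in the arc between $\xi^0$ and $\underline{\xi}$ (resp.\ $\underline{\xi}$ and $\xi^{-1}$), with boundary labels matching the input sequences.  The Gromov compactification $\Discbar_{r|1|s}$ admits the standard operadic description of its boundary: nodes separate the disc in one of three ways, depending on whether the sub-disc contains the $K$ arc, contains $\underline{\xi}$ but not the $K$ arc, or contains neither.

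Next I would extend Definition \ref{def:floer_datum_disc_1_output} to a universal and conformally consistent Floer datum on $\Discbar_{r|1|s}$, assigning time-shifting value $1$ at each positive puncture and $1/2$ at each of the two outputs, with matching pullbacks of $\alpha_S$ and $X_S\otimes\alpha_S$.  The datum restricted to the $(r,s)=(0,0)$ stratum should recover that of Section \ref{sec:copr-wrapp-categ}, while at boundary strata of the form $\Discbar_{r|1|s} \times \Discbar_d$ in which the $A_\infty$ factor feeds into one of the auxiliary inputs, the restriction to the $\Discbar_d$ component should be the datum of Section \ref{sec:a_infty-structure}.  The inductive argument of Lemma \ref{lem:abundance_floer_data} then produces such a family.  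Sard--Smale transversality, the confinement of Lemma \ref{lem:stay_in_compact_set}, and the index theorem imply that the parametrised moduli spaces $\Discbar_{r|1|s}(x_{-1},x_0;\vec{x},\underline{x},\vec{y})$ are compact manifolds of the expected dimension, and a signed count of rigid elements weighted by the orientation isomorphism of Equation \eqref{eq:orientation_coproduct} together with appropriate reduced-degree signs defines $\Delta^{r|1|s}$, of total degree $n$.

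The $A_\infty$-bimodule map equation is then obtained by enumerating the codimension-one boundary strata of one-dimensional components.  A sub-disc containing the $K$ arc yields a contribution that factors as $\mu_{\cY^l_K \otimes \cY^r_K}^{0|1|s'} \circ \Delta$ or $\mu_{\cY^l_K \otimes \cY^r_K}^{r'|1|0} \circ \Delta$ on the target bimodule; since this target has no other non-trivial bimodule operations, these strata exhaust the target-side contributions.  A sub-disc containing $\underline{\xi}$ but not the $K$ arc yields a factorisation $\Delta \circ \mu_\cB^{m|1|\ell}$ through the diagonal structure on $\cB$, and a sub-disc containing neither yields $\Delta \circ (\id\otimes\cdots\otimes\mu^k\otimes\cdots\otimes\id)$ acting on a consecutive block of auxiliary inputs on one of the two sides.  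Summed with appropriate signs, these three families reproduce exactly the terms in the $A_\infty$-bimodule map equation for $\Delta \co \cB \to \cY^l_K \otimes \cY^r_K$.

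The main obstacle is the sign bookkeeping.  At each stratum one must compare the boundary orientation on $\Discbar_{r|1|s}$ with the product orientation from the gluing description, by rearranging the orientation lines $o_{x_i}$, $o_{\underline{x}}$, $o_{\underline{y}}$ via Equations \eqref{eq:orientation_differential_HW}, \eqref{eq:orientation_higher_product}, and \eqref{eq:orientation_coproduct}, tracking the resulting Koszul signs, and comparing the inward or outward gluing parameter with the trivialising vector $\partial_s$ on the breaking strip.  The calculation follows the template of Lemma \ref{lem:Delta_chain_map} applied to each of the three boundary types, and by the universality of these conventions it reduces to verifying a handful of base cases and matching the resulting reduced-degree signs against the $\maltese$-conventions used for bimodules over $\cB$.
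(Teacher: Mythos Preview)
Your approach is the same as the paper's, and your three-fold boundary classification (sub-disc containing the $K$ arc; containing $\underline{\xi}$ but not the $K$ arc; containing neither) matches the paper's list \eqref{eq:boundary_strata_stasheff_bimodules-1}--\eqref{eq:boundary_strata_stasheff_bimodules-5} once one unpacks the first and third families into two cases each.

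There is one concrete error in the Floer data: you cannot fix the time-shifting value to be $1$ at every positive puncture and $1/2$ at each of the two outputs.  Since $\alpha_T$ is closed and vanishes on $\partial T$, Stokes forces the sum of the output weights to equal the sum of the input weights, namely $r+s+1$; this equals $1$ only when $r=s=0$.  More subtly, even after allowing arbitrary rescaling, the two output weights cannot in general be kept \emph{equal}: the Remark following Figure~\ref{fig:higher_coprod_data} works out the case $r=1$, $s=0$ and shows that at a boundary stratum of type \eqref{eq:boundary_strata_stasheff_bimodules-1} or \eqref{eq:boundary_strata_stasheff_bimodules-2} (an $A_\infty$ disc bubbling at one of the two outputs), conformal consistency with $\bfD_{\mu}$ forces an asymmetry between $w_{-1,T}$ and $w_{0,T}$ that no choice of conformal constant removes.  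The correct formulation (Definition~\ref{def:floer_datum_disc_2_outputs}) lets $\rho_T$ take values in all of $(0,+\infty)$, records the individual weights $w_{-1,T}$, $w_{0,T}$, $\underline{w}_T$, $w_{k,T}$, $w_{|k,T}$ as part of the datum subject only to the Stokes constraint, and determines them inductively by conformal consistency at the boundary.  With that correction in place, the rest of your outline goes through as written.
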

By definition, $\Delta$ consists of a collection of maps
\begin{multline*}
  \Delta^{r|1|s} \co CW^{*}(L_{r-1}, L_r) \otimes \cdots \otimes CW^{*}(L_0, L_1) \otimes CW^{*}(L_{|0}, L_{0}) \otimes  \\
CW^{*}( L_{|1}, L_{|0}) \otimes \cdots \otimes CW^{*}(L_{|s}, L_{|s-1}) \to    CW^{*}(K, L_{r}) \otimes  CW^{*}(L_{|s}, K )
\end{multline*}
satisfying the quadratic equation
\begin{multline} \label{eq:infinity_bimodule_equation}
 \sum (-1)^{n \maltese_{|s}^{|\ell+1}}   \mu^{r-m|1|s-\ell}(a_r, \ldots, a_{m+1}, \Delta^{m|1|\ell}(a_m, \ldots, \underline{p}, \ldots, a_{|\ell}), a_{|\ell+1}, \ldots, a_{|s}  ) \\
+  \sum (-1)^{  \maltese_{|s}^{|\ell+1} + n +1}   \Delta^{r-m|1|s-\ell}(a_r, \ldots, a_{m+1}, \mu^{m|1|\ell}(a_m, \ldots, \underline{p}, \ldots, a_{|\ell}), a_{|\ell+1}, \ldots, a_{|s}  ) \\
+ \sum (-1)^{  \maltese_{|s}^{|\ell+1} +n +1}   \Delta^{r|1|s-\ell + k }(a_r, \ldots, \underline{p}, \ldots, a_{|k}, \mu^{n-k}(a_{|k+1}, \ldots, a_{|\ell}), a_{|\ell+1}, \ldots, a_{|s}  )  \\
+ \sum (-1)^{\maltese_{|s}^{k} +n+1 }   \Delta^{r-m+k|1|s}(a_r, \ldots, a_{m+1}, \mu^{m-k}(a_m, \ldots, a_{k+1}), a_k,  \ldots,  \underline{p}, \ldots,  a_{|s}  ) = 0.
\end{multline}

In order to construct this operation, we imitate the construction of the operations $\mu^d$, replacing $\Discbar_d$ by the moduli space $  \Discbar_{r|1|s}$ discussed in Appendix \ref{sec:stash-polyh-contr}:
\begin{defin} \label{def:floer_datum_disc_2_outputs}
A \emph{Floer datum} $D_{T}$ on a stable disc $T \in   \Discbar_{r|1|s} $  consists of the following choices on each component:
\begin{enumerate}
\item Strip-like ends near the marked points:  We have a parametrisation $\epsilon^k \co Z_{\pm} \to T$ (or $\epsilon^{|k} $, or $\underline{\epsilon}$)   of each end $\xi^k  \in  \overline{T}$  (respectively $ \xi^{|k}$, or $\underline{\xi}$)  whose source is $Z_-$ if $k=-1, 0$ and $Z_+$ otherwise.  
\item Time shifting map:  A map $ \rho_{T} \co \partial \bar{T} \to (0,+\infty)$ which is constant near each end.  We write $w_{k,T}$, $\underline{w}_{T}$  or $w_{|k,T}$  for the value on the appropriate  end.
\item Basic $1$-form and Hamiltonian perturbations:  A closed $1$-form $\alpha_{T}$ whose restriction to the boundary vanishes and a map $H_{T} \co T \to \sH(M)$  on each surface defining a Hamiltonian flow $X_{T}$ such that the pullback of $ X_{T} \otimes \alpha_{T} $ under $ \epsilon^k$ agrees with  $  X_{\frac{H}{w_{k,T} }  \circ \psi^{w_{k,T}}} \otimes dt$ (and the corresponding condition near  $ \xi^{|k}$ and $\underline{\xi}$ also holds). 
\item Almost complex structures:  A map $I_{T} \co S \to \sJ(M)$ whose pullback under $ \epsilon^k $ agrees with  $ (\psi^{w_{k,T}})^{*} I_t $ (with the corresponding condition at  $ \xi^{|k}$ and $\underline{\xi}$).
\end{enumerate}
\end{defin}
As before, the closedness of the $1$-form implies that
\begin{equation*} w_{-1,T} +  w_{0,T}  = \sum_{1 \leq k \leq r} w_{k,T}  + \underline{w}_{T} +  \sum_{1 \leq k \leq s} w_{|k,T}. \end{equation*}

\begin{defin}
A  \emph{universal and conformally consistent}  choice of Floer data for the bimodule map $\Delta$ is a choice   $\bfD_{\Delta}$ of Floer data  for every pair of integers $r, s \geq 0$, and every (representative) of an element of  $\Discbar_{r|1|s}$ which varies smoothly over this compactified moduli space, such that the two Floer data on any irreducible component of a singular disc are conformally equivalent.  At each stratum of the boundary Floer data agree, to infinite order in the coordinates coming from strip like ends, with those obtained by gluing.
\end{defin}

\begin{figure}
  \centering
     \includegraphics{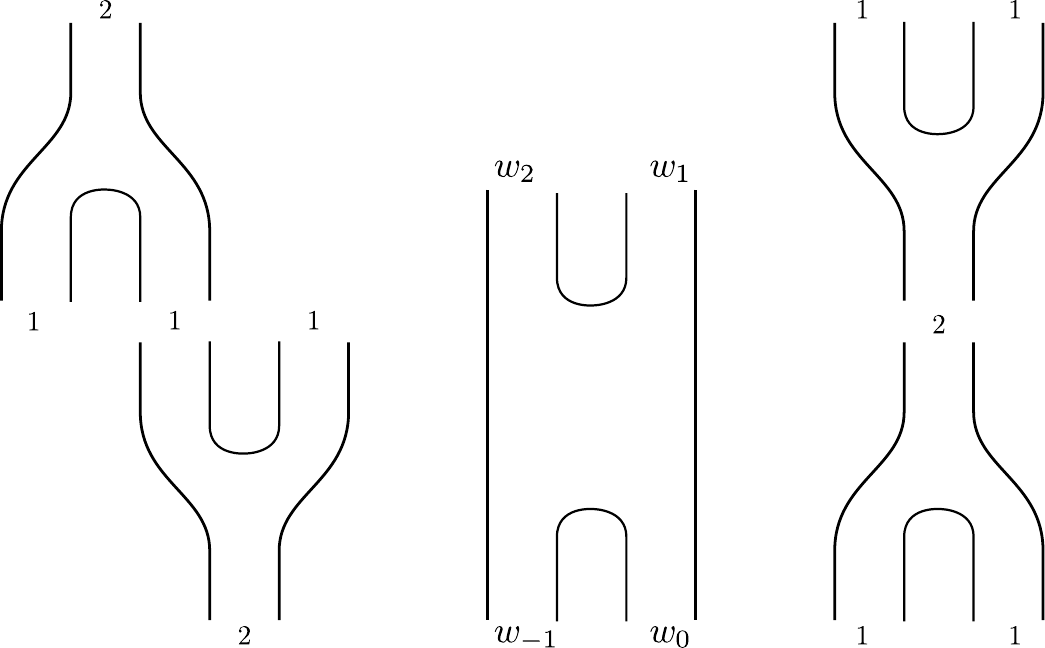}
  \caption{ }
  \label{fig:higher_coprod_data}
\end{figure}

\begin{rem}
  In the special case $r=s=0$ treated in Section \ref{sec:copr-wrapp-categ}, the weights at the two outputs were equal; this is false in general.  To see the necessity of unequal weights, consider the simplest case where $r=1$ but $s=0$.  The corresponding moduli space $  \Discbar_{1|1|0} $ is homeomorphic to a closed interval as illustrated in Figure \ref{fig:higher_coprod_data}, where the notation has been simplified relative to Figure \ref{fig:floer_data_three_punctures} by recording only the weights at the ends.  Near one endpoint, the weights correspond, up to rescaling, to the weights chosen when $r=s=0$ and hence can indeed be chosen equal.  However, near the other endpoint the reader can see that the weights are not equal for the conformal constants we have chosen, and is invited to verify that this asymmetry persists for all possible conformal constants.
\end{rem}

The different possible configurations of singular discs correspond to the strata of   the boundary of $ \Discbar_{r|1|s} $ listed in  Equation \eqref{eq:boundary_strata_stasheff_bimodules-1}-\eqref{eq:boundary_strata_stasheff_bimodules-5}.  We are requiring that the Floer data on $\Discbar_{r|1|s}$ be conformally equivalent  to the data $\bfD_{\mu}$ if an irreducible component comes from the factor $\Discbar_{d}$ for the appropriate $d$ or to the data $\bfD_{\Delta}$ for smaller values of $r$ and $s$.  In particular, having fixed the data  $\bfD_{\mu}$ so that transversality holds in Lemma \ref{lem:abundance_floer_data}, the existence of enough such universal data to guarantee transversality follows from the same argument as in Lemma \ref{lem:abundance_floer_data}.

Let $L_0, \ldots,  L_{r}$ and  $L_{|0}, \ldots , L_{|s}$ be sequences of  objects of $\cB$, and $K$ an object of $\Wrap$.   Consider chords $\vx = \{ x_{k} \in \Chord(L_{k-1},L_k) \}_{k=1}^{r}$,  $\underline{x} \in  \Chord(L_{|0},L_{0})$,  $\vx[|] = \{ x_{|k} \in \Chord(L_{|k},L_{|k-1}) \}_{k=1}^{s}$ as well as $x_{-1} \in \Chord(K, L_{r}) $ and  $x_0 \in \Chord(L_{|s}, K) $.   We associate these chords to the ends   $(\xi^{-1}, \xi^{|s} , \ldots, \xi^{|1}, \xi^0,  \underline{\xi}  , \xi^{1} , \ldots, \xi^{r}) $  of every disc $T \in \Disc_{r|1|s}$, and label the boundary components of $T$  with the appropriate Lagrangian.  Given universal Floer data $\bfD_{\Delta}$, we write $\Disc_{r|1|s}(x_{-1}, x_0 ;  \vx[|], \underline{x}, \vx) $ for the space of maps $u \co T \to  M$ which satisfy constraints along the boundary and the ends analogous to Equation \eqref{eq:boundary_condition_disc_A_infty},  and which solve the differential equation
\begin{equation}
  \label{eq:dbar_disc_2_outputs}
  \left(du - X_{T} \otimes \alpha_{T}\right)^{0,1} = 0
\end{equation}
where the $(0,1)$ part is taken with respect to the $T$-dependent almost complex structure.

Before introducing a morass of notation, it shall be helpful to recall that the boundary strata of any moduli space we will construct decompose as a product of lower dimensional moduli spaces; in most cases these can be associated to algebraic operations.  Whenever this is the case, we shall write the products in the \emph{reverse} order of the algebraic composition.

The consistency condition imposed on  $\bfD_{\mu}$ implies that the Gromov bordification  $\Discbar_{r|1|s}(x_{-1}, x_0 ; \vx[|], \underline{x}, \vx) $ is obtained by adding the following codimension $1$ strata
\begin{align*}
  \Discbar_{r-m+1}(x_{-1}; \vx[-1]_{m+1})  \times \Discbar_{m|1|s}(y_{-1}, x_0 ; \vx[|], \underline{x}, \vx[m]_{1})  & \quad  \begin{cases} y_{-1}  \in  \Chord(K, L_{m})  \\  \vx[m]_1 =  (x_{1}, \ldots, x_{m}) \\ \vx[-1]_{m+1} =(y_{-1}, x_{m+1} , \ldots, x_{r})  \end{cases} \\
 \Discbar_{s-\ell+1}(x_{0}; \vx[|\ell+1]_{0})  \times \Discbar_{r|1|\ell}(x_{-1}, y_0 ; \vx[|1]_{|\ell}, \underline{x}, \vx)  & \quad  \begin{cases} y_{0}  \in  \Chord(L_{|\ell}, K)  \\  \vx[|1]_{|\ell} =  (x_{|\ell}, \ldots, x_{|1}) \\ \vx[|\ell+1]_{0} =(x_{|s} , \ldots, x_{|\ell+1}, y_0)  \end{cases} \\
\Discbar_{r-m|1|s-\ell}(x_{-1}, x_0 ; \vx[|\ell+1]_{|s}, \underline{y}, \vx[r]_{m+1})  \times  \Discbar_{\ell+m+1}(\underline{y}; \vx[m]_{|\ell})  & \quad  \begin{cases} \underline{y} \in  \Chord(L_{m},L_{|\ell})  \\  \vx[r]_{m+1} =  (x_{m+1}, \ldots, x_{r}) \\ \vx[m]_{|\ell} =(x_{|\ell}, \ldots, x_{|1}, \underline{x}, x_{1} , \ldots, x_{m}) \\  \vx[|\ell+1]_{|s} = (x_{|s} , \ldots, x_{|\ell+1})  \end{cases} \\
\Discbar_{r|1|s-\ell+k+1}(x_{-1}, x_0 ; \vx[|]_{y}, \underline{x}, \vx)  \times  \Discbar_{\ell-k}(y; \vx[|k+1]_{|\ell})  & \quad  \begin{cases} y \in  \Chord(L_{|\ell},L_{|k})  \\  \vx[|]_{y} =  (x_{|s}, \ldots, x_{|\ell+1}, y, x_{|k}, \ldots, x_{|1})  \\ \vx[|k+1]_{|\ell} =(x_{|\ell}, \ldots, x_{|k+1}) \end{cases} \\
\Discbar_{r-m+k+1|1|s}(x_{-1}, x_0 ; \vx[|], \underline{x}, \vx_{y})  \times  \Discbar_{m-k}(y; \vx[m]_{k+1})  & \quad  \begin{cases} y \in  \Chord(L_{k},L_{m})  \\  \vx_{y} =  (x_1, \ldots, x_k, y, x_{m+1}, \ldots, x_{r})  \\ \vx[m]_{k+1} =(x_{k+1},  \ldots , x_{m}) \end{cases}
\end{align*}

In order to see that these cases cover all possible breakings of virtual codimension $1$, we first observe that one possibility is the breaking of a strip; if the breaking takes place at an input, the position relative to  $\underline{x}$ determines which of the last three strata accounts for it, and it otherwise appears in the first two strata.  

The remaining possibilities correspond to both components being stable; these should be labelled by the strata of the abstract moduli space $\Discbar_{r|1|s}$: Indeed, the first two cases correspond to the codimension $1$ strata (\ref{eq:boundary_strata_stasheff_bimodules-1}) and (\ref{eq:boundary_strata_stasheff_bimodules-2}), while the next three cases correspond to the remaining strata (\ref{eq:boundary_strata_stasheff_bimodules-3})-(\ref{eq:boundary_strata_stasheff_bimodules-5}).

Using Lemma \ref{lem:stay_in_compact_set} to prove compactness, and the usual Sard-Smale argument to prove transversality, we conclude:
\begin{lem} \label{lem:moduli_space_2_outputs_manifold_boundary}
For fixed inputs $( \vx[|], \underline{x}, \vx ) $, there are finitely many pairs $(x_{-1},x_0)$ such that  the moduli spaces $\Discbar_{r|1|s}(x_{-1}, x_0 ; \vx[|], \underline{x}, \vx) $  are non-empty.  Moreover, those which are not empty are compact and they form manifolds with boundary of dimension
\begin{equation} \label{eq:dimension_moduli_2_outputs}
  \deg(x_0) + \deg(x_{-1})-n + r+s - \deg(\underline{x}) - \sum_{1 \leq k \leq r} \deg(x_k) - \sum_{1 \leq k \leq s} \deg(x_{|s}) 
\end{equation} 
for generic data $\bfD_{\Delta}$.
\qed 
\end{lem}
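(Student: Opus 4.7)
The lemma comprises three claims: finiteness of outputs, compactness of each non-empty space, and the smooth manifold structure of the asserted dimension. My plan is to handle each along the lines already used for $\Disc_{d}(x_{0};\vx)$ and $\Disc_{0|1|0}$. The dimension count is the only step that requires a genuinely new calculation and is what I view as the main obstacle.

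For finiteness, I would repeat the argument of Lemma \ref{lem:finite_number_outputs_lag}: choose a radial level $\{R\}\times\partial M$ separating every asymptotic chord of the fixed inputs from infinity, which is possible by condition \eqref{eq:non-degenerate_Reeb_orbit}, and apply Lemma \ref{lem:stay_in_compact_set} to $S=u^{-1}([R,+\infty)\times\partial M)$. This confines the two output chords $x_{-1}$ and $x_{0}$ to a fixed compact subset of $M$, which contains only finitely many chords. The same energy bound prevents any sequence of solutions from escaping to infinity, so standard Gromov compactness yields compactness of each non-empty $\Discbar_{r|1|s}(x_{-1},x_{0};\vx[|],\underline{x},\vx)$; the compatibility of $\bfD_{\Delta}$ with $\bfD_{\mu}$ at the boundary identifies the limits as precisely the five families of products listed before the statement. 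Regularity for generic $\bfD_{\Delta}$ then follows from a standard Sard--Smale argument on the universal moduli space, exactly as in Lemma \ref{lem:abundance_floer_data}: the inductive construction after Definition \ref{def:floer_datum_disc_2_outputs} leaves enough interior freedom to perturb while respecting the consistency conditions on the boundary.

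For the dimension formula \eqref{eq:dimension_moduli_2_outputs}, I would add the abstract moduli dimension $\dim\Discbar_{r|1|s}=r+s$, coming from a disc with $r+s+3$ boundary marked points modulo its three-dimensional automorphism group, to the Fredholm index of the linearised Cauchy--Riemann operator at a fixed domain. I expect that index to equal
\[
\deg(x_{-1})+\deg(x_{0})-\deg(\underline{x})-\sum_{k=1}^{r}\deg(x_{k})-\sum_{k=1}^{s}\deg(x_{|k})-n.
\]
The extra $-n$ relative to the one-output case is precisely the shift that makes $\Delta^{0|1|0}$ a degree $n$ chain map in \eqref{eq:first_term_coproduct}. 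The cleanest way to verify it is by linear gluing to the already understood operators governing $\mu^{d}$: capping one of the two outputs by a rigid half-plane with boundary on $K$ produces a domain of the type controlling $\mu^{r+s+2}$; the capping piece contributes Fredholm index $n$ because attaching a rigid cap at an outgoing end shifts the total Maslov boundary contribution by $n$, and additivity of the index under linear gluing then pins down the two-output case. Summing this index with the abstract moduli dimension recovers \eqref{eq:dimension_moduli_2_outputs}, completing the argument.
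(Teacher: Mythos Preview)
Your proposal is correct and matches the paper's approach, which is terse: the sentence preceding the lemma simply invokes Lemma~\ref{lem:stay_in_compact_set} for compactness and the Sard--Smale argument for transversality, then states the result with a bare \qed. Your elaboration of the finiteness and compactness steps is accurate and follows the same template as Lemma~\ref{lem:finite_number_outputs_lag} and the analogous result for $\Discbar_{d}(x_{0};\vx)$.

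One minor imprecision in your dimension argument: the gluing heuristic you sketch does not quite work as written. A cap at the output $x_{-1}\in\Chord(K,L_{r})$ must carry boundary on both $K$ and $L_{r}$, not on $K$ alone, and gluing such a cap does not produce a domain of the $\mu^{r+s+2}$ type (the input count is off by one, and there is no reason for the cap to be rigid). The cleaner route, and the one implicit in the paper's appeal to the index theorem, is to read the index directly from Lemma~\ref{lem:guts_orientations}: for a disc with a single boundary component carrying two negative ends one has $e_{j}=2$, so the factor $\lambda(L_{0})^{1-e_{j}}=\lambda(L_{0})^{-1}$ contributes the $-n$ shift, exactly as recorded by the presence of $\lambda(L_{0})$ on the left-hand side of \eqref{eq:orientation_coproduct}. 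With that correction your index computation is on solid ground.
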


Assuming that the expression in Equation \eqref{eq:dimension_moduli_2_outputs} vanishes,  the elements of  $ \Discbar_{r|1|s}(x_{-1}, x_0 ; \vx[|], \underline{x}, \vx)  $ are rigid. Using Equation (\ref{eq:orientation_coproduct}), and the orientation of $ \Disc_{r|1|s} $ fixed in Appendix \ref{sec:abstr-moduli-space}, we obtain an isomorphism
\begin{equation*}
o_{x_{r}} \otimes \cdots \otimes  o_{x_{1}} \otimes o_{\underline{x}} \otimes o_{x_{|1}} \otimes \cdots \otimes o_{x_{|s}}   \cong   o_{x_{-1}} \otimes  o_{x_{0}}.
\end{equation*}
Writing $\Delta_{u}$ for the induced map on orientation lines, we define 
\begin{equation} \label{eq:bimodule_map_signed_sum}
  \Delta^{r|1|s}  = \sum (-1)^{\ddagger} \Delta_{u}
\end{equation}
where the sum is taken over rigid elements of the moduli spaces $\Discbar_{r|1|s}(x_{-1}, x_0 ; \vx[|], \underline{x}, \vx)$, and the sign is given by
\begin{equation} \label{eq:ddagger_sign}
\ddagger = \sum_{j=1}^{s} (s-j+1) \deg(x_{|j}) + s \deg(\underline{x})  + \sum_{j=1}^{r} (j+s) \deg(x_j) .
\end{equation}
The proof the next result follows from matching the terms of Equation \eqref{eq:infinity_bimodule_equation} with the boundary strata listed before Lemma \ref{lem:moduli_space_2_outputs_manifold_boundary}.    The sign verification is simply a tedious extension of Lemma \ref{lem:Delta_chain_map}.
\begin{lem}
The operations $\Delta^{r|1|s}$ defined in Equation (\ref{eq:bimodule_map_signed_sum}) satisfy Equation (\ref{eq:infinity_bimodule_equation}), and hence define a map of bimodules. \qed
\end{lem}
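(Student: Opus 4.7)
The plan is to read the bimodule relation \eqref{eq:infinity_bimodule_equation} off the signed boundary count of $1$-dimensional moduli spaces, exactly as in the proofs of Lemma \ref{lem:Delta_chain_map} and Proposition \ref{prop:a_infty_structure}. Concretely, I would fix asymptotic chords $(x_{-1},x_0;\vx[|],\underline{x},\vx)$ so that the dimension formula \eqref{eq:dimension_moduli_2_outputs} equals $1$. By Lemma \ref{lem:moduli_space_2_outputs_manifold_boundary}, the space $\Discbar_{r|1|s}(x_{-1},x_0;\vx[|],\underline{x},\vx)$ is then a compact $1$-manifold with boundary, so the signed count of its boundary points vanishes. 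The goal is to check that this count, once weighted by the signs in \eqref{eq:bimodule_map_signed_sum} and \eqref{eq:ddagger_sign}, produces precisely the left-hand side of \eqref{eq:infinity_bimodule_equation}.

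The next step is to match each of the five codimension-$1$ strata listed just before Lemma \ref{lem:moduli_space_2_outputs_manifold_boundary} with one of the four sums in \eqref{eq:infinity_bimodule_equation}. The first two strata (a Floer strip breaking off at $\xi^{-1}$ or $\xi^0$) glue the differential on $\cY^l_K$ or $\cY^r_K$ against $\Delta$, so after invoking the bimodule operations from Equation \eqref{eq:bimodule_action_tensor} they yield the first sum, namely $\mu^{r-m|1|s-\ell}(\dots,\Delta^{m|1|\ell}(\dots),\dots)$. The third stratum, where a disc with one output carrying $\underline{x}$ among its inputs bubbles, gives the second sum $\Delta^{r-m|1|s-\ell}(\dots,\mu^{m|1|\ell}(\dots\underline{p}\dots),\dots)$. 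The fourth and fifth strata, where a stable disc $\Discbar_{\ell-k}$ or $\Discbar_{m-k}$ bubbles off a contiguous block of input chords not adjacent to $\underline{\xi}$, are assembled into the third and fourth sums of \eqref{eq:infinity_bimodule_equation}. This bijection is dictated by the description of the boundary of $\Discbar_{r|1|s}$ in Appendix \ref{sec:stash-polyh-contr}, so on the level of unsigned counts the desired identity is automatic.

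The remaining task is the sign verification. For each stratum I would, as in Lemma \ref{lem:Delta_chain_map}, write down the natural product orientation using \eqref{eq:orientation_differential_HW}, \eqref{eq:orientation_higher_product} and \eqref{eq:orientation_coproduct}, compare with the boundary orientation on the $1$-dimensional moduli space induced by the gluing parameter $\partial_s$, and record the discrepancy. Combining that discrepancy with the global signs $\ddagger$ from \eqref{eq:ddagger_sign} (on the $\Delta$-factors) and $\dagger$ from \eqref{eq:dagger_sign} (on the $\mu$-factors), together with the bimodule sign conventions in \eqref{eq:differential_tensor_over_k} and \eqref{eq:bimodule_action_tensor}, should reproduce exactly the Koszul exponents $\maltese_{|s}^{|\ell+1}$ and $\maltese_{|s}^{k}$ appearing in \eqref{eq:infinity_bimodule_equation}, each shifted by $n+1$ to reflect that $\Delta$ has degree $n$.

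The main obstacle is this sign accounting: five distinct boundary types, an asymmetric pair of outputs with unequal weights (as highlighted in the remark following Figure \ref{fig:higher_coprod_data}), and a Koszul sign rule that treats $\underline{p}$ by its honest degree while all other inputs are weighted by reduced degree. I expect that once one has verified the sign at a representative stratum of each of the five types, the remaining cases reduce by inspection, so that the detailed computation can legitimately be described as a tedious extension of Lemma \ref{lem:Delta_chain_map} rather than genuinely new work. All analytic input, namely Gromov compactness, the maximum-principle estimate of Lemma \ref{lem:stay_in_compact_set}, transversality, and gluing at nodal configurations, is already in place from the earlier parts of Section \ref{sec:coproduct-as-map}.
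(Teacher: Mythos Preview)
Your proposal is correct and follows precisely the approach the paper sketches: the paper itself says that the proof ``follows from matching the terms of Equation \eqref{eq:infinity_bimodule_equation} with the boundary strata listed before Lemma \ref{lem:moduli_space_2_outputs_manifold_boundary}'' and that ``the sign verification is simply a tedious extension of Lemma \ref{lem:Delta_chain_map},'' which is exactly what you outline. One small descriptive inaccuracy: the first two boundary strata are not merely Floer strips breaking off at $\xi^{-1}$ or $\xi^0$, but full discs $\Discbar_{r-m+1}$ and $\Discbar_{s-\ell+1}$ carrying several inputs; these are what encode the higher module operations $\mu^{r-m|1|0}$ and $\mu^{0|1|s-\ell}$ on $\cY^l_K \otimes \cY^r_K$ from \eqref{eq:bimodule_action_tensor}, not just the differential, though your subsequent sentence shows you understand this.
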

\section{Maps relating open and closed strings} \label{sec:maps-relating-open}

\subsection{Symplectic cohomology}
\subsubsection{Breaking the $S^1$ symmetry}
Let $F \co S^1 \times M \to \bR$ be a smooth non-negative function such that
\begin{equation}
  \label{eq:support_condition_perturbation}
  \parbox{32em}{$F$ and  $ \lambda(X_F)$ are uniformly bounded in absolute value, and there is a sequence $R_i \to  +\infty$ such that $F(t, r, y) $ vanishes if $r$ lies in some open neighbourhood of $R_i$.}
\end{equation}

The choice of $F$ is meant to break the $S^1$-invariance of Hamiltonian orbits of $H$ by considering instead the orbits of
\begin{equation*}
  H_{S^1}(t,m) \equiv H(m) + F(t,m).
\end{equation*}
In particular, we write  $X_{S^1}$ for the time-dependent Hamiltonian vector field of  $H_{S^1}$ whose set of time-$1$ periodic orbits will be denoted $\Orbit$.
\begin{lem}
For a generic function $F$ satisfying Assumption \eqref{eq:support_condition_perturbation}, all time-$1$ periodic orbits  of $ X_{S_1} $ are non-degenerate.
\end{lem}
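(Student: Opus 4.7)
The plan is to run the standard Sard--Smale transversality argument of Floer--Hofer--Salamon, with the one delicate point being that our perturbation class $\mathcal{F}$ is constrained by the support condition \eqref{eq:support_condition_perturbation}. The argument proceeds in three steps.

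First I would establish a confinement statement: there exists a compact set $K \subset M$, depending only on the $L^\infty$ bounds on $F$ and $\lambda(X_F)$ from \eqref{eq:support_condition_perturbation}, such that every $1$-periodic orbit of $X_{S^1}$ lies inside $K$. On the cylindrical end where $H = r^2$, one has $X_H = 2r \, R_\lambda$ (with $R_\lambda$ the Reeb vector field), and using the explicit form of $\lambda(X_F)$ one controls $dr/dt$ along any orbit. Combined with the fact that the hypersurfaces $\{r = R_i\}$ (where $F$ vanishes) are invariant under $X_H$, so that an orbit which crosses such a level is forced to remain on it and hence reduces to a Reeb orbit of period $2R_i$, one concludes (invoking the non-degeneracy and discreteness of Reeb orbits from \eqref{eq:non-degenerate_Reeb_orbit} and the lemma following it) that for suitably chosen $R_1$, no $1$-periodic orbit enters the region $\{r \geq R_1\}$.

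Second, I would set up the universal moduli space. Let $\mathcal{F}$ denote a separable Banach space of perturbations satisfying \eqref{eq:support_condition_perturbation}; concretely one takes functions of class $C^k$ for large $k$, with a Floer-type norm making $\mathcal{F}$ a Banach manifold. Write $\mathcal{L}$ for the Banach manifold of $W^{1,p}$-loops in $M$ that lie in a fixed compact neighbourhood of the set $K$ from Step~1. The assignment
\[ \Phi(F, \gamma)(t) = \dot\gamma(t) - X_{H + F}(t,\gamma(t)) \]
is a smooth section of a Banach bundle over $\mathcal{F} \times \mathcal{L}$, and $\widetilde{\mathcal{M}} := \Phi^{-1}(0)$ is the universal space of pairs $(F,\gamma)$ with $\gamma$ a $1$-periodic orbit of $X_{H+F}$.

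Third, I would verify that $\Phi$ is transverse to the zero section. At $(F_0,\gamma_0) \in \widetilde{\mathcal{M}}$ the vertical linearisation is
\[ D\Phi(\delta F, V) = \nabla_t V - \nabla_V X_{H+F_0}(\gamma_0) - X_{\delta F}(t,\gamma_0(t)). \]
Given a nonzero element $\eta$ in the $L^2$-cokernel of the first two terms, one must exhibit $\delta F \in \mathcal{F}$ with $\int_0^1 \omega(X_{\delta F}(t,\gamma_0(t)), \eta(t))\, dt \neq 0$. Since Step~1 keeps $\gamma_0$ inside $\{r \leq R_1\}$, a localised bump $\delta F$ supported near a point $(t_0, \gamma_0(t_0))$ of the simple orbit underlying $\gamma_0$ does the job and lives in $\mathcal{F}$ without disturbing the behaviour near the radii $R_i$. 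Sard--Smale applied to the projection $\widetilde{\mathcal{M}} \to \mathcal{F}$ (which is Fredholm of index $0$) then yields a comeagre set of $F$ for which every $1$-periodic orbit is cut out transversely, equivalently is non-degenerate as a fixed point of the time-$1$ return map.

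The main obstacle is the injectivity/somewhere-injective input needed in Step~3: one must rule out the possibility that $\gamma_0$ is an obstructed multiple cover whose underlying simple orbit fails to allow a localised perturbation. This is handled exactly as in the closed case (cf.\ Salamon--Zehnder): any $1$-periodic orbit is a multiple cover of a simple orbit traversed an integer number of times, so localising $\delta F$ on an open set where the simple orbit is embedded suffices, and this open set exists by the ODE uniqueness for $X_{H+F_0}$.
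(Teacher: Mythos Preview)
Your Step~1 confinement claim is false, and the rest of the argument leans on it. On the cylindrical end $H=r^2$ has $X_H = 2r\,R_\lambda$, so there is a $1$-periodic orbit of $X_H$ on the level $\{r=\rho\}$ whenever $2\rho$ lies in the Reeb length spectrum. That spectrum is unbounded, so orbits of $X_{H+F}$ exist at arbitrarily large radii regardless of the $L^\infty$ bounds on $F$ and $\lambda(X_F)$; there is no compact $K$ containing them all, and in particular your conclusion ``no $1$-periodic orbit enters $\{r\geq R_1\}$'' is wrong. What is true is that near each $\{r=R_i\}$ (where $F\equiv 0$) the flow preserves $r$, so orbits cannot \emph{cross} these hypersurfaces; but they may well sit in any of the infinitely many slabs $\{R_i < r < R_{i+1}\}$.

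The paper's proof exploits exactly this slab structure rather than any global confinement. Because the Reeb length spectrum is discrete (from Assumption~\eqref{eq:non-degenerate_Reeb_orbit}), one may choose the sequence $R_i$ so that no level $\{r=R_i\}$ carries an orbit of $X_H$. Every orbit of $X_{S^1}$ then lies in the interior of one of the slabs, and there the condition \eqref{eq:support_condition_perturbation} imposes no constraint beyond the uniform bounds; hence $F$ may be perturbed freely in a neighbourhood of each orbit, and the standard local argument (your Steps~2--3, applied slab by slab) gives non-degeneracy for a generic choice. Your Sard--Smale machinery is fine once you drop the global compactness claim and instead run it in each slab, taking a countable intersection at the end.
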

\begin{proof}
 Since time-$1$ Hamiltonian orbits of $H$ correspond to Reeb orbits of $\lambda | \partial M$, and since the length spectrum of Reeb orbits is discrete by Assumption \eqref{eq:non-degenerate_Reeb_orbit}, we may choose a sequence  $R_i \to  +\infty$  which does not intersect the length spectrum.  In particular, $F$ may be picked arbitrarily among $C^1$ bounded functions in a neighbourhood of the orbits of $X_{S^1}$, which implies non-degeneracy for generic choices of perturbations.
\end{proof}
Choosing conventions compatible with the ones for Lagrangians, we define the degree of $y \in \Orbit$ in terms of the Conley-Zehnder index as
\begin{equation}
  \label{eq:formula_degree_CZ}
\deg(y) = n - CZ(y) .
\end{equation}

Given an $S^1$-dependent family  $I_{S^1} \in  \sJ(M)$, we consider maps
\begin{equation*} u \co (-\infty, +\infty) \times S^1 \to M  \end{equation*}
converging exponentially at each end to a time-$1$ periodic orbit of $H_{S^1}$, and   satisfying  Floer's equation
\begin{equation}
  \label{eq:dbar_cylinder}
 \left(du - X_{S^1}  \otimes  dt \right)^{0,1} = 0
\end{equation}
with respect to the $S^1$-dependent almost complex structure $I_{S^1} $.  Unless $y_0=y_1$, we write $\Cyl(y_0,y_1)$ for the quotient by $\bR$ of the moduli space of solutions to the Cauchy Riemann equation \eqref{eq:dbar_cylinder} with asymptotic limits $y_0$ at $s=-\infty$ and $y_1$ at $s=+\infty$, and   $\Cylbar(y_0,y_1)$ for its Gromov bordification.  The analogue of Lemma \ref{lem:compactification_strip_manifold} holds:
\begin{lem}\label{lem:finite_number_outputs_cyl}
For a generic family $I_{S^1}$, the moduli space  $\Cylbar(y_0,y_1)$ is a compact manifold with boundary of dimension $\deg(y_0) - \deg(y_1) -1 $, whose boundary is covered by the closure of the natural inclusions
\begin{equation*}   \Cyl(y_0, y) \times   \Cyl(y, y_1) \to     \Cylbar(y_0, y_1) . \end{equation*} 
Moreover, for each orbit $y_1$, the moduli space $\Cylbar(y_0,y_1)$ is empty for all but finitely many choices of $y_0$.
\end{lem}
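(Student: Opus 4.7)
The proof will run closely parallel to that of Lemma \ref{lem:compactification_strip_manifold} and the finiteness argument of Lemma \ref{lem:finite_number_outputs_lag}, only now with cylinders in place of strips and time-$1$ orbits of $X_{S^1}$ in place of Hamiltonian chords. The plan has three ingredients: transversality, a $C^0$ bound, and an action argument.

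First I would establish transversality and the dimension count. By the usual Sard-Smale argument of Floer-Hofer-Salamon \cite{FHS}, for a generic $S^1$-dependent family $I_{S^1}$, the linearisation of the Cauchy-Riemann operator \eqref{eq:dbar_cylinder} is surjective at every solution, so the moduli space of un-quotiented solutions is a smooth manifold of dimension $\deg(y_0) - \deg(y_1)$; here the Fredholm index is computed using the Conley-Zehnder formula together with the convention \eqref{eq:formula_degree_CZ}. Because $F$ genuinely breaks the $S^1$-symmetry (by the preceding lemma the orbits are isolated), the $\bR$-translation is free whenever $y_0 \neq y_1$, and the quotient $\Cyl(y_0, y_1)$ is a manifold of dimension $\deg(y_0) - \deg(y_1) - 1$.

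Next I would address compactness. Exactness of $\omega$ rules out disc and sphere bubbling, so the only issue is confining solutions to a compact subset of $M$. Since $H = r^2$ on the collar and $I_{S^1}$ is of contact type, and since $F$ and $\lambda(X_F)$ are uniformly bounded by Condition \eqref{eq:support_condition_perturbation}, the preimage $u^{-1}([R, +\infty) \times \partial M)$ for $R$ chosen from the sequence $\{R_i\}$ (so that $F$ vanishes near $\{R\} \times \partial M$) satisfies the hypotheses of Lemma \ref{lem:stay_in_compact_set}; this provides a radial bound on $u$ depending only on the asymptotes. Gromov compactness then produces the bordification $\Cylbar(y_0, y_1)$ by broken cylinders, and the standard gluing result going back to Proposition 4.1 of \cite{floer} identifies a neighbourhood of each once-broken configuration with a product $[0, \epsilon) \times \Cyl(y_0, y) \times \Cyl(y, y_1)$, yielding the manifold-with-boundary structure and the claimed boundary cover.

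Finally I would prove the finiteness statement. Fix $y_1$ and choose $R$ from $\{R_i\}$ large enough to separate $y_1$ from infinity; applying the radial bound of the previous paragraph to the negative end of the cylinder shows that $y_0$ is forced to lie in the compact subset $M^{in} \cup_{\partial M} [1, R] \times \partial M$, which by non-degeneracy contains only finitely many time-$1$ orbits of $X_{S^1}$. I expect the main technical obstacle to be this last radial confinement: unlike for strips, where the argument of Lemma \ref{lem:finite_number_outputs_lag} applies directly, here one must ensure that the $S^1$-perturbation $F$ does not destroy the maximum-principle-type estimate on which Lemma \ref{lem:stay_in_compact_set} relies. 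This is precisely the reason for the cutoff condition in \eqref{eq:support_condition_perturbation}: by working at a radius $R_i$ where $F$ vanishes identically, one recovers the unperturbed contact-type estimate for $r \circ u$ and completes the argument.
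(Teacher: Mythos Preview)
Your proposal is essentially correct and follows the same route as the paper: transversality via Sard--Smale, radial confinement via Lemma~\ref{lem:stay_in_compact_set} applied at a level $R_i$ where $F$ vanishes, and then Gromov compactness plus gluing.  The paper's own proof is in fact much terser than yours and simply cites Lemmas~\ref{lem:stay_in_compact_set} and~\ref{lem:action_negative}.

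There is one point where your argument is incomplete.  In the finiteness paragraph you write ``applying the radial bound of the previous paragraph to the negative end of the cylinder shows that $y_0$ is forced to lie in the compact subset.''  But the radial bound you established in the compactness paragraph was for the situation where \emph{both} asymptotes lie below $R_i$, so that $S = u^{-1}([R_i,+\infty)\times\partial M)$ carries no punctures and the hypothesis \eqref{eq:positivity_assumption} of Lemma~\ref{lem:stay_in_compact_set} reduces to $0\le 0$.  When you are trying to rule out $y_0$ lying \emph{above} $R_i$, the restricted surface $S$ acquires a negative cylindrical end converging to $y_0$, and hypothesis \eqref{eq:positivity_assumption} now reads $\Laction(y_0)\le 0$.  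This is not automatic: it is exactly the content of Lemma~\ref{lem:action_negative}, which computes that orbits sufficiently far out on the collar have negative action (using that $H$ dominates $F$ and $\lambda(X_F)$ there).  You should invoke that lemma explicitly, just as the proof of Lemma~\ref{lem:finite_number_outputs_lag} does in the Lagrangian case you are modelling your argument on.  Once that is added, your proof is complete and matches the paper's.
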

\begin{proof}
The perturbation $F$ prevents us from directly applying the maximum principle to prove compactness.  However, choosing $R_i$  sufficiently large, Lemmas \ref{lem:stay_in_compact_set} and \ref{lem:action_negative} imply that  all elements of $ \Cyl(y_0, y_1) $ are contained within the compact domain $r \leq R_i$.  
\end{proof}

\subsubsection{The symplectic chain complex}
 Consider the graded abelian group $SC^*$ with components
\begin{equation}
 \label{eq:symplectic_complex}
 SC^{i}(M) \equiv \bigoplus _{\stackrel{y \in \Orbit}{  \deg(y) = i}} | o_y|.
\end{equation}
Here, $| o_y|$ is again the orientation line on a rank-$1$ real vector space $o_y$ associated to each periodic orbit in Definition \ref{def:orientation}.

The differential on the symplectic chain complex is defined analogously to the definition of the differential in wrapped Floer cohomology as a sum of contributions of rigid cylinders $u \in \Cylbar(y_0,y_1)$ :
\begin{align}
\notag  \partial \co  SC^{i}(M)  & \to SC^{i+1}(M)  \\  \label{eq:symplectic_differential}
[y_1] & \mapsto (-1)^{i} \sum_{u} \partial_u([y_1] ),
\end{align}
where $\partial_u$ is the map induced on orientation lines by the isomorphism coming from Equation \eqref{eq:orientation_differential_SH}.  The finiteness of the right hand side this time follows from Lemma \ref{lem:finite_number_outputs_cyl}.  The cohomology of this complex will be denoted $SH^*(M)$, and is called \emph{symplectic cohomology}.

\subsection{From the closed to the open sector}

In this section, we define a chain map
\begin{equation}
  \label{eq:closed_to_open}
\CO \co  SC^*(M) \to CW^{*}(K,K)
\end{equation}
for every object $K$ of $\Wrap$.   The reader should keep in mind the surface at the top left of Figure \ref{fig:Cardy}.

Let $T_{1}^{1}$ denote the complement of an interior marked point $\sigma$ and a boundary marked point $\xi^0$ on a disc.  Up to biholomorphism, we may assume that we are considering the unit disc with $\sigma$ the origin and $\xi^0$ equal to $1$. Using polar coordinates, we fix the positive cylindrical end
\begin{align}
\label{eq:cylindrical_end_positive_puncture} \epsilon^{1} \co S^1 \times [0,+\infty) & \to T_{1}^{1} \\
\epsilon^{1}(\theta, r) & = e^{-(r+ i \theta) + i \pi} 
\end{align}
near $\sigma$, a negative strip-like end $\epsilon^0$ near $\xi^0$, and a closed $1$-form $\alpha_{1}^{1}$ whose pullback under $\epsilon^{k}$ agrees with $dt$ and whose restriction to the boundary vanishes.  In addition, choose $1$-forms $\beta^{1}_1$ and $\gamma_{1}^{1}$ whose restrictions to the boundary also vanish, and whose pullbacks under $\epsilon^1$ agree respectively with $dt$ and $0$  while their pullbacks under $\epsilon^0$ are $0$ and $dt$.  We require that the following conditions hold
\begin{align*}
\begin{cases}  \alpha_{1}^{1} \wedge \beta_{1}^{1} =   \alpha_{1}^{1} \wedge \gamma_{1}^{1} = \beta_{1}^{1} \wedge \gamma_{1}^{1} = 0 \\ 
d \gamma_{1}^{1}  \leq  0 \\
\overline{ \{ d  \beta_{1}^{1}  \neq 0 \}}   \subset \{ d  \gamma_{1}^{1}  \neq 0 \}.  \end{cases}
\end{align*}
Finally, we pick a map  $I_{1}^{1} \co T_{1}^{1} \to \sJ(M)$ whose pullback under $\epsilon^{k}$ agrees with $(\psi^{2})^{*}I_{t}$ if $k=0$ and $I_{S^1}$ if $k=1$. 

Given a time-$1$ orbit $y_1$ of $X_{S^1}$, and a time-$1$ chord $x_{0}$  with both endpoints on $K$,  we write $\Disc_{1}^{1}(x_0;y_1)$ for the space of maps $u$  from $T_{1}^{1}$ to $M$ with the following boundary conditions
\begin{equation}
  \label{eq:boundary_disc_1_interior_1_boundary_output}
  \begin{cases}  u(z)  \in \psi^{2} K & \textrm{if $z \in \partial T_{1}^{1} $ } \\
\lim_{s \to - \infty} u \circ \epsilon^{0}(s, \cdot) = \psi^{2} \circ x_0(\cdot) &  \\
\lim_{s \to + \infty} u \circ \epsilon^{1}(s, \cdot) = y_1(\cdot)
 \end{cases}
\end{equation}
and solving the differential equation
\begin{equation}
  \label{eq:dbar_map_from_SH}
  \left( du -   X_{H}   \otimes \alpha_{1}^{1} -  X_{F} \otimes \beta^{1}_{1} -   \left( X_{ \frac{H}{2} \circ \psi^{2}}  - X_{H}  \right) \otimes  \gamma^{1}_1  \right)^{0,1} = 0.
\end{equation}
The key point is that the pullback of the above equation under $\epsilon^1$ agrees with Equation \eqref{eq:dbar_cylinder}, while its pullback under $\epsilon^0$  agrees with Equation \eqref{eq:dbar_strip} up to the  symplectically conformal map $\psi^{2}$.    In proving compactness, we simply note that $ \frac{H}{2} \circ \psi^{2} -H$ agrees with $r^{2}$ on the cylindrical end, which implies that away from a compact subset of $M$, $\left(   \frac{H}{2} \circ \psi^{2} -H \right)  d \gamma_{1}^{1}$ agrees with $r^2 d\gamma_{1}^{1}$  and hence dominates $ F  d \beta_{1}^{1}$ since $F$ is uniformly bounded.   In particular, the hypothesis of  Lemma \ref{lem:stay_in_compact_set} holds if $\{ R_i \} \times \partial M  $ separates $y_1$ from infinity, and we let $S =u^{-1}([R_i,+\infty) \times \partial M)$.  Applying the usual transversality argument, we conclude
\begin{lem}
For a generic family $I_{1}^{1}  $, the Gromov bordification of the moduli space $\Disc_{1}^{1}(x_0;y_1)$ is a compact manifold with boundary of dimension
\begin{equation*}
  \deg(x_0) - \deg(y_1)
\end{equation*}
which, for fixed $y_1$ is empty for all but finitely many chords $x_0$.  The codimension $1$ boundary strata are the images of the natural inclusions
\begin{equation}
  \label{eq:boundary_disc_from_SH}
\coprod \Disc(x_0;x_1) \times  \Disc_{1}^{1}(x_1;y_1)   \cup \coprod  \Disc_{1}^{1}(x_0;y_0) \times \Cyl(y_0;y_1)   \to \partial  \Discbar_{1}^{1}(x_0;y_1). 
\end{equation} \qed
\end{lem}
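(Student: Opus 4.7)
The plan is to verify the four assertions (regularity, dimension, finiteness of outputs, and description of codimension-one boundary) in the standard order, with the bulk of the work being a compactness argument tailored to the deformed equation (\ref{eq:dbar_map_from_SH}).

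First I would establish transversality by the usual Sard--Smale argument applied to the universal moduli space parametrised by the space of admissible $I_{1}^{1}$: since the Floer data at the two ends are fixed, one perturbs $I_{1}^{1}$ only on a compact region of $T_{1}^{1}$ disjoint from the cylindrical and strip-like ends, where solutions cannot be locally constant along $K$ or along an orbit of $X_{S^{1}}$ because of the non-degeneracy assumptions \eqref{eq:non-degenerate_Reeb_orbit} and \eqref{eq:non-degenerate_chord}. A linearised operator computation, combined with the Riemann--Roch index formula for a disc with one interior and one boundary puncture (with the appropriate rescaling by $\psi^{2}$ at the output), then gives the dimension $\deg(x_{0})-\deg(y_{1})$; the only mildly subtle point is a careful bookkeeping of gradings, which follows from the conventions established in \eqref{eq:formula_degree_CZ} and the grading on wrapped Floer cohomology.

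The main obstacle is compactness, and I would attack it in two pieces. The interior compactness (no sphere or disc bubbling, no breaking off of Floer trajectories other than those listed) follows from exactness of $M$ and of $K$, together with the fact that the removed boundary and cylindrical ends correspond precisely to the two types of breaking in \eqref{eq:boundary_disc_from_SH}. To prevent escape to infinity, I would invoke Lemma \ref{lem:stay_in_compact_set} on $S = u^{-1}([R_{i}, +\infty) \times \partial M)$ for $R_{i}$ chosen so that $\{R_{i}\} \times \partial M$ separates $y_{1}$ (and the image of $\psi^{2} x_{0}$) from infinity. The hypothesis to verify is precisely the one the authors flag in the paragraph following \eqref{eq:dbar_map_from_SH}: on the collar, $\tfrac{H}{2} \circ \psi^{2} - H$ equals $r^{2}$, so the contribution $(\tfrac{H}{2}\circ\psi^{2} - H)\, d\gamma_{1}^{1}$ to the curvature of the perturbed Cauchy--Riemann equation grows like $r^{2} d\gamma_{1}^{1}$ and dominates $F\, d\beta_{1}^{1}$ because $F$ is uniformly bounded by \eqref{eq:support_condition_perturbation} and because the supports are nested via the condition $\overline{\{d\beta_{1}^{1} \neq 0\}} \subset \{d\gamma_{1}^{1} \neq 0\}$ together with $d\gamma_{1}^{1} \leq 0$. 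This is what lets the integrated maximum principle apply uniformly in $R_{i}$, giving the a priori $C^{0}$ bound on $u$.

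Once a uniform $C^{0}$ bound is in hand, finiteness of the possible $x_{0}$ for fixed $y_{1}$ is an immediate corollary, by the same argument used for Lemma \ref{lem:finite_number_outputs_lag}: only finitely many chords lie in any compact subset of $M$. Gromov compactness then produces $\Discbar_{1}^{1}(x_{0};y_{1})$ and identifies its codimension-one boundary with limits in which either a Floer strip breaks off at the boundary puncture or a Floer cylinder breaks off at the interior puncture; no other stratum is possible because the conformal structure on $T_{1}^{1}$ is rigid. Finally, the gluing theorem in each of these two cases is standard (strip gluing, respectively cylinder gluing, into a surface with fixed conformal type), which identifies the boundary with the image of the natural inclusions in \eqref{eq:boundary_disc_from_SH} and completes the proof. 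The anticipated hard step is the verification of the integrated maximum principle in the presence of all three auxiliary $1$-forms $\alpha_{1}^{1}$, $\beta_{1}^{1}$, $\gamma_{1}^{1}$; the dimension and transversality pieces are essentially formal.
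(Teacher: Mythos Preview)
Your proposal is correct and follows the paper's approach. The paper does not give a separate proof (the lemma ends with \qed), but the key compactness step is sketched in the paragraph immediately preceding the statement: the observation that $\tfrac{H}{2}\circ\psi^{2}-H = r^{2}$ on the collar makes $(\tfrac{H}{2}\circ\psi^{2}-H)\,d\gamma_{1}^{1}$ dominate $F\,d\beta_{1}^{1}$, so Lemma~\ref{lem:stay_in_compact_set} applies to $S=u^{-1}([R_{i},+\infty)\times\partial M)$ with $R_{i}$ separating $y_{1}$ from infinity; your write-up reproduces and expands on exactly this. One small remark: in your choice of $R_{i}$ you should separate only $y_{1}$ from infinity, not $\psi^{2}x_{0}$, since the whole point of the finiteness claim is that $x_{0}$ is not fixed a priori; the $C^{0}$ bound coming from Lemma~\ref{lem:stay_in_compact_set} then depends only on $y_{1}$ and yields the finiteness of possible $x_{0}$ as in Lemma~\ref{lem:finite_number_outputs_lag}.
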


The outcome is that whenever $  \deg(x_0)  = \deg(y_1) $ the moduli space $\Disc_{1}^{1}(x_0;y_1)$ is rigid, so every element $u$ gives an isomorphism
\begin{equation*}
  o_{y_1} \to   o_{x_0},
\end{equation*}
coming from Equation \eqref{eq:orientation_SH_to_HW}, and hence a map $\CO_u$  on orientation lines.  We define 
\begin{equation}
  \label{eq:SH_to_HF_formula}
  \CO = \sum \CO_{u}
\end{equation}
with the sum taken over all rigid discs.   The argument for the validity of the signs is essentially the same as the one for the proof that the differential in the wrapped Floer complex squares to $0$, and hence is omitted.
\subsection{From the open to the closed sector}
In Appendix \ref{sec:discs-with-one}, we construct a moduli space  $\Disc_{d}^1$  of abstract discs with one interior outgoing end, and several boundary end. The main result of this section is:
\begin{lem} \label{lem:open_to_closed}
The counts of solutions to $\dbar$ operators parametrised by the  moduli spaces $\Disc_{d}^1$ define maps
\begin{equation} \label{eq:open_closed_length_d} \OC_{d} \co  CW^{*}(L_{d-1},L_0) \otimes \cdots \otimes CW^{*}(L_1, L_2) \otimes CW^{*}(L_0, L_1)   \to SC^{*}(M)   \end{equation} 
which shift degree by $n-d+1$ and are the components of a degree $n$ chain map
\begin{equation}
\OC \co CC_{*}(\cB, \cB) \to SC^{*}(M)
\end{equation}
where the left hand-side is the cyclic bar complex of $\cB$ equipped with the differential computing Hochschild homology.
\end{lem}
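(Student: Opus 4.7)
The strategy parallels Section \ref{sec:a_infty-structure} and Section \ref{sec:coproduct-as-map}, now for the moduli spaces $\Disc_d^1$ of discs with one interior outgoing puncture and $d$ cyclically ordered boundary input punctures constructed in Appendix \ref{sec:discs-with-one}. First I would fix a universal and conformally consistent Floer datum $\bfD_{\OC}$ on $\Discbar_d^1$: on each surface $S$ one chooses strip-like ends $\epsilon^k$ at the boundary punctures and a positive cylindrical end $\underline{\epsilon}$ at the interior puncture, a time-shifting function $\rho_S \co \partial \overline{S} \to [1, +\infty)$ constant with value $w_{k,S}$ near $\epsilon^k$, a closed one-form $\alpha_S$ vanishing on $\partial S$ and pulling back to $w_{k,S}\, dt$ at $\epsilon^k$ and to $dt$ at $\underline{\epsilon}$, one-forms $\beta_S, \gamma_S$ playing the role of $\beta_1^1, \gamma_1^1$ in Equation \eqref{eq:dbar_map_from_SH} so that the pullback of the perturbed $\dbar$-equation under $\underline{\epsilon}$ is Equation \eqref{eq:dbar_cylinder}, together with a Hamiltonian $H_S$ and an almost complex structure $I_S$ of the appropriate conformal type at every end. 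An inductive construction as in Lemma \ref{lem:abundance_floer_data} produces such data, consistent at each codimension-one stratum with the previously chosen $\bfD_{\mu}$ and with the lower $\OC_{d'}$.

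For cyclically labelled chords $\vx = (x_1, \ldots, x_d)$ with $x_k \in \Chord(L_{k-1}, L_k)$ and $L_d = L_0$, together with a time-$1$ orbit $y$ of $X_{S^1}$, I consider the moduli space $\Disc_d^1(y; \vx)$ of solutions to the $\dbar$-equation dictated by $\bfD_{\OC}$, with boundary conditions on the rescaled Lagrangians $\psi^{\rho_S(z)} L_k$ and asymptotics $\psi^{w_{k,S}} x_k$ at the strip-like ends and $y$ at the cylindrical end. Compactness of the Gromov bordification follows from Lemma \ref{lem:stay_in_compact_set} applied to $u^{-1}([R_i, +\infty) \times \partial M)$ for a suitable sequence $R_i \to +\infty$, and transversality is standard for generic $I_S$. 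The expected dimension is
\[ \deg(y) - n + d - 1 - \sum_{k=1}^{d} \deg(x_k), \]
so rigid elements define an operation $\OC_d$ shifting degree by $n - d + 1$, as required. The count uses orientation line identifications analogous to those producing Equation \eqref{eq:orientation_higher_product} and Equation \eqref{eq:orientation_SH_to_HW}, weighted by a cyclically invariant sign in the spirit of Equation \eqref{eq:ddagger_sign}.

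The chain-map property $\partial_{SC} \circ \OC + \OC \circ b = 0$ (with $b$ the differential on the cyclic bar complex) reduces to the vanishing of the signed count of boundary points of the one-dimensional components of $\Discbar_d^1(y; \vx)$. These codimension-one strata split into three families: strip breakings at a single boundary input, contributing $\OC_d \circ (\id^{\otimes (k-1)} \otimes \mu^1 \otimes \id^{\otimes (d-k)})$; cylinder breakings at the interior puncture, contributing $\partial_{SC} \circ \OC_d$; and factorisations of the abstract surface into products $\Discbar_{d-k+1}^1 \times \Discbar_k$ in which the $\Discbar_k$ factor absorbs a cyclically consecutive block of inputs. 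The last family decomposes further: if the absorbed block sits between two successive inputs we obtain the internal Hochschild terms $\OC_{d-k+1} \circ (\id \otimes \cdots \otimes \mu^k \otimes \cdots \otimes \id)$, whereas if the block straddles the boundary arc adjacent to the interior puncture we obtain the wrap-around Hochschild terms. The rotational symmetry of the interior puncture within $\Disc_d^1$ is precisely what allows this second type of stratum to appear, thereby reproducing the cyclic contribution to $b$.

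The main obstacle is combining this geometric identification of strata with a sign bookkeeping tight enough to pair the boundary contributions against the Hochschild differential exactly. This is a tedious but standard extension of the analysis of Lemma \ref{lem:Delta_chain_map}, exploiting the cyclic invariance of the chosen sign convention to match wrap-around contributions with the terms of $b$ involving the distinguished factor of $CC_*(\cB, \cB)$. Once the signs are verified, the identification of the one-dimensional boundary of $\Discbar_d^1(y;\vx)$ with the terms of $\partial_{SC} \circ \OC + \OC \circ b$ completes the proof that $\OC$ is a chain map of degree $n$.
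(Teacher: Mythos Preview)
Your approach is essentially that of the paper, and the architecture of the argument---universal Floer data on $\Discbar_d^1$, dimension count, identification of codimension-one boundary strata with the terms of $\partial \circ \OC + \OC \circ b$, and sign verification in the simplest case---matches the paper's.

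A few details of your Floer datum need correction, however. The interior puncture is an \emph{output}, so it carries a \emph{negative} cylindrical end $\epsilon^0 \co (-\infty, 0] \times S^1 \to S$, not a positive one. More seriously, since $\alpha_S$ is closed and vanishes on $\partial S$, Stokes's theorem forces the weight at the interior end to equal $w_{0,S} = \sum_k w_{k,S}$; you cannot have $\alpha_S$ pull back to $dt$ there while the boundary weights satisfy $w_{k,S} \geq 1$ and $d \geq 2$. The correct asymptotic condition at the interior end is accordingly $\psi^{w_{0,S}} y_0$, not $y$ itself. Finally, no $\gamma_S$ form is needed: the paper uses only a single sub-closed $1$-form $\beta_S$ (with $d\beta_S \leq 0$, vanishing near the boundary ends, pulling back to $dt$ at the interior end) to carry the $S^1$-breaking perturbation $F$; see Definition \ref{def:floer_datum_discs_interior_puncture} and Equation \eqref{eq:dbar_disc_1_interior_output}. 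The auxiliary form $\gamma$ appears only for $\CO$, where the roles of input and output at the interior and boundary punctures are reversed.
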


Recall that to each $A_{\infty}$ category, one may assign a Hochschild homology group which is the homology of the cyclic bar complex with respect to the differential
\begin{multline}
  \label{eq:hochschild_differential}
  b (a_{d} \otimes \cdots \otimes a_{1}) =  \sum_{ i+j \leq d} (-1)^{\mathsection} \mu^{d-j-1}( a_{i-1}, \ldots, a_1, a_d, \ldots, a_{i+j+1} ) \otimes a_{i+j} \otimes \ldots \otimes a_{i}   \\
+ \sum_{ i+j \leq d} (-1)^{\maltese_{1}^{i-1}} a_{d} \otimes \cdots \otimes a_{i+j+1} \otimes \mu^{j+1}(a_{i+j} , \ldots, a_{i}) \otimes a_{i-1} \otimes \ldots \otimes a_{1}
\end{multline}
where the first sign is given by
\begin{equation}
\mathsection = \maltese_{1}^{i-1} \cdot (1+ \maltese_{i}^{d}) + \maltese_{i}^{d-1}+1.
\end{equation}
\begin{rem}
Note that our cyclic bar complex is equipped with a cohomological differential, such that the degree of $a_{d} \otimes \cdots \otimes a_{1} $ given by
\begin{equation}
  \label{eq:degree_hochschild_cplex}
|| a_{d} \otimes \cdots \otimes a_{1} || =   \deg(a_{d}) + \sum_{i=1}^{d-1} ||a_{i}||.
\end{equation}
\end{rem}

The simplest of these operations ($d=1$) maps the wrapped Floer complex of a Lagrangian $L$ to the symplectic chain complex.  The labelled curve controlling it appear at the top right of Figure \ref{fig:Cardy}.  By now, the reader should be quite familiar with our method of constructing operations from moduli spaces, so instead of illustrating the case $d=1$, we jump directly to the general one.

\begin{defin} \label{def:floer_datum_discs_interior_puncture}
A \emph{Floer datum} $D_{S}$ on a stable disc $S \in \Discbar_{d}^{1}$ with $d$ positive boundary punctures $(\xi^1, \ldots, \xi^d) $ and one negative interior puncture $\sigma$ consists of the following choices on each component:
\begin{enumerate}
\item Strip-like ends near the punctures:  we have  maps $\epsilon^k \co Z_{+} \to S$ for each $1 \leq k \leq d$ converging at the end of the half-strip $Z_{+}$ to $\xi^{k}$.   In addition, we fix a cylindrical end $\epsilon^0 \co (-\infty,0] \times S^1 \to S$ for $\sigma$ which extends to a biholomorphism $(-\infty, 1] \times S^1 \to \overline{S} - \sigma$ taking $(1,1)$ to $\xi^{d}$.
\item Time shifting map:  A map $ \rho_{S} \co \partial \bar{S} \to [1,d]$ which is constant near each marked point.  We write $w_{k,S}$ for the value on the $k$\th end and set  
  \begin{equation}
    w_{0,S} = \sum_{k=1}^{d} w_{k,S}
  \end{equation}
\item Basic $1$-form and Hamiltonian perturbations:  A closed $1$-form $\alpha_{S}$ whose restriction to the boundary vanishes and a map $H_{S} \co S \to \sH(M)$  on each surface such that the pullback of $ X_{H_S} \otimes \alpha_{S} $ under $ \epsilon^k $ agrees with  $  X_{\frac{H}{w_{k,S}} \circ \psi^{w_{k,S}}} \otimes dt$.
\item Sub-closed $1$-form: A $1$-form $\beta_{S}$ which may be written as the product of a smooth function with $\alpha_{S}$,  satisfying $d \beta_{S} \leq 0$,  and whose pullback under $\epsilon^k$ vanishes unless $k=0$, in which case it agrees with $dt$.
\item Almost complex structures:  A map $I_{S} \co S \to \sJ(M)$ whose pullback under $ \epsilon^k $ agrees with  $ (\psi^{w_{k,S}})^{*} I_{t} $ unless $k=0$ in which case it agrees with  $ (\psi^{w_{0,S}})^{*} I_{S^1}$. 
\end{enumerate}
\end{defin}

\begin{defin}
 \emph{Universal and conformally consistent} Floer data for the map $\OC$ consist of a choice $\bfD_{\OC}$ of Floer data  for every integer $d \geq 1$ and every (representative) of an element of  $\Discbar_{d}^1$ which varies smoothly over the compactified moduli space. The two natural Floer data (coming from $\bfD_{\OC}  $ or $ \bfD_{\mu}$)  on any irreducible component of a singular disc should be conformally equivalent, and the Floer data should agree to infinite order near each boundary stratum with those obtained by gluing.
\end{defin} 
In particular, Floer data have been chosen for integers less than $d$, the data on the boundary of $\Discbar_{d}^1$ is determined, via the maps described in the discussion surrounding Equation \eqref{eq:boundary_discs_1_puncture}, by the choice of Floer data $ \bfD_{\mu}$.

Consider a collection of Lagrangians $L_0, L_1, \ldots , L_d$ with $L_d = L_0$  and a sequence of chords $\vx = \{ x_1, \ldots, x_d \}$ with  $x_k \in \Chord(L_{k-1},L_k)$ for $1 \leq k \leq d$, as well as an orbit $y_0 \in \Orbit$.  We define  $ \Disc_{d}^1(y_0; \vx) $ to be the moduli space of maps $u \co S \to M$, with $S$ an arbitrary element of $ \Disc_{d}^1 $, satisfying the boundary and asymptotic conditions
\begin{equation}
\begin{cases}
u(z) \in \psi^{\rho_{S}(z)} L_k & \textrm{if $z \in \partial S$ lies between $\xi^k$ and $\xi^{k+1}$} \\
\lim_{s \to \pm \infty} u \circ \epsilon^{k}(s, \cdot) =  \psi^{w_{k,S}} x_k(\cdot)  & \textrm{if $1 \leq k \leq n$} \\
\lim_{s \to \pm \infty} u \circ \epsilon^{0}(s, \cdot) =  \psi^{w_{0,S}} y_0(\cdot) &
\end{cases}
\end{equation}
and solving the differential equation
\begin{equation}
  \label{eq:dbar_disc_1_interior_output}
\left(du - X_{H_S} \otimes \alpha_{S} - X_{\frac{F}{w_{0,S}} \circ \psi^{w_{0,S}}} \otimes \beta_{S} \right)^{0,1} = 0
\end{equation}
where the $(0,1)$ part is taken with respect to the $S$-dependent almost complex structure, and the function $F$ is the one appearing in the definition of symplectic cohomology, see, e.g. Condition \eqref{eq:support_condition_perturbation}.

The consistency condition imposed on  $\bfD_{\OC}$ implies that the Gromov bordification $\Discbar_{d}^1(y_0; \vx) $  is obtained by adding the images of natural inclusions
\begin{align}
  \label{eq:codim_1_strata_discs_1_interior_output}
  \Cyl(y_0; y_1) \times \Discbar_{d}^1(y_1; \vx) & \to \partial \Discbar_{d}^1(y_0; \vx)  \\  \label{eq:codim_1_strata_discs_1_interior_output_2}
  \Discbar_{d}^1(y_0; \vx[1])  \times   \Discbar_{d_2}(x; \vx[2]) & \to \partial \Discbar_{d}^1(y_0; \vx) 
\end{align}
where in the second type of stratum, $x$ agrees with one of the elements of $\vx[1] $ and the sequence obtained by removing $x$  from $\vx[1]$ and replacing it by the sequence $\vx[2]$ agrees with $\vx$ up to cyclic ordering.   Since $d\beta_{S}$ is semi-negative and $F$ is positive,
Lemma \ref{lem:stay_in_compact_set} easily applies to this moduli space so we conclude
\begin{lem}
The moduli spaces $\Discbar_{d}^1(y_0; \vx) $  are compact and there are only finitely many choices of orbits $y_0$ for which they are non-empty once the input sequence $\vx$ is fixed.  In addition, for generic universal and conformally consistent Floer data $\bfD_{\OC}$ they form manifolds of dimension
\begin{equation*}
  \deg(y_0) - n + d  - 1  - \sum_{1 \leq k \leq d} \deg(x_k).
\end{equation*}\qed
\end{lem}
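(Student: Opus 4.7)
The plan is to verify the three assertions of the lemma — compactness, finiteness of outputs, and the transversality/dimension count — following the same template used for $\Disc_d(x_0;\vx)$ and the other moduli spaces appearing earlier in the paper. The new ingredient is the interior puncture with its symplectic-cohomology-type asymptotic behaviour, governed by the auxiliary $1$-form $\beta_S$ and the breaking-of-symmetry perturbation $F$.

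\textbf{Compactness.} I would apply Lemma \ref{lem:stay_in_compact_set} to constrain the image of any solution to a compact subset of $M$. Relative to the case of $\Disc_d(x_0;\vx)$, the extra term involving $F$ in Equation \eqref{eq:dbar_disc_1_interior_output} needs to be controlled; this is precisely the purpose of condition (4) in Definition \ref{def:floer_datum_discs_interior_puncture}. Because $\beta_S = f \cdot \alpha_S$ for some smooth function and $d\beta_S \leq 0$, while $F \geq 0$ and vanishes in neighbourhoods of the separating hypersurfaces $\{R_i\} \times \partial M$, the contribution of the $F$-perturbation to the integrated maximum principle is non-positive on the preimage of $\{r \geq R_i\} \times \partial M$. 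Standard Gromov compactness then yields $\Discbar_d^1(y_0;\vx)$ with exactly the codimension $1$ strata \eqref{eq:codim_1_strata_discs_1_interior_output} and \eqref{eq:codim_1_strata_discs_1_interior_output_2}, using the universal and conformally consistent gluing of Floer data as the matching condition.

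\textbf{Finiteness of outputs.} Given a fixed input sequence $\vx$, pick $R_i$ larger than the radial coordinate of every input chord $x_k$ and such that $\{R_i\} \times \partial M$ is disjoint from the Reeb length spectrum, which is possible by Assumption \eqref{eq:non-degenerate_Reeb_orbit}. If $y_0$ lies in $[R_i, +\infty) \times \partial M$, then the same strategy as in Lemma \ref{lem:finite_number_outputs_lag} applies: set $S = u^{-1}([R_i,+\infty) \times \partial M)$ and invoke Lemma \ref{lem:stay_in_compact_set} on $v = u|_S$ to obtain a contradiction. Hence all possible $y_0$ lie in the compact set $\{r \leq R_i\}$, which contains only finitely many time-$1$ orbits of $X_{S^1}$ by non-degeneracy.

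\textbf{Transversality and dimension.} The usual Sard-Smale argument over the Banach manifold of admissible universal Floer data (as in Section (9k) of \cite{seidel-book}) gives regularity for generic $\bfD_{\OC}$. The dimension splits as $\dim \Disc_d^1 + \mathrm{ind}$, where $\dim \Disc_d^1 = d - 1$ (a disc with one interior marked point has an $S^1$ residual automorphism group, leaving $d-1$ boundary parameters) and $\mathrm{ind}$ is the Fredholm index of the linearised Cauchy-Riemann operator for a fixed source. Combining the Maslov-index contribution from the boundary chords with the Conley-Zehnder contribution from the interior orbit, and using the grading convention $\deg(y) = n - CZ(y)$ of Equation \eqref{eq:formula_degree_CZ}, one obtains $\mathrm{ind} = \deg(y_0) - n - \sum_{k=1}^d \deg(x_k)$, so the total dimension is $\deg(y_0) - n + d - 1 - \sum_{k=1}^d \deg(x_k)$, as claimed.

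The only step with any subtlety is the compactness argument, where one must check that the $F$-perturbation does not destroy the maximum principle; the sub-closedness of $\beta_S$ and its proportionality to $\alpha_S$ in Definition \ref{def:floer_datum_discs_interior_puncture} were built in precisely to rule this out. The transversality and dimension calculations then proceed as in the analogous $\Disc_d(x_0;\vx)$ case.
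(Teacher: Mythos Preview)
Your proposal is correct and follows the same approach as the paper, which is extremely terse here: the paper simply notes that since $d\beta_S \leq 0$ and $F \geq 0$, Lemma \ref{lem:stay_in_compact_set} applies, and places a \qed. Your write-up fills in the standard details (the restriction to $u^{-1}([R_i,+\infty)\times\partial M)$, the Sard--Smale argument, the dimension count via $\dim\Disc_d^1 = d-1$) that the paper leaves implicit.
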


Assuming that $\deg(y_0) =  n -d + 1 + \sum_{1 \leq k \leq d} \deg(x_k)  $, we conclude that the elements of  $\Disc_{d}^1(y_0; \vx) $ are rigid.  Orienting the tangent space of $ \Disc_{d}^1$ using Equation \eqref{eq:orientation_discs_1_marked_point} and applying Equation \eqref{eq:orientation_HH_to_SH}, we obtain for each disc $u$ an isomorphism
\begin{equation}
  o_{x_d} \otimes \cdots \otimes o_{x_1} \to o_{y_0}.
\end{equation}
Writing $\OC_{u}$ for the induced map on orientation lines, we define  the map $\OC_d$ in Equation \eqref{eq:open_closed_length_d} to be the sum
\begin{equation}
  \OC_d([x_d], \ldots, [x_1])  = \sum_{\stackrel{\deg(y_0) =n -d + 1 + \sum_{1 \leq k \leq d} \deg(x^k) }{u \in \Disc_{d}^{1}(y_0, \vx) }} (-1)^{\deg(x_d) + \dagger} \OC_{u}( [x_d], \ldots, [x_1])
\end{equation}
where $\dagger$ is given in Equation \eqref{eq:dagger_sign}.

By comparing the boundary strata in Equation \eqref{eq:codim_1_strata_discs_1_interior_output} and \eqref{eq:codim_1_strata_discs_1_interior_output_2} with the formula for the differential $b$ on the cyclic bar complex, we conclude that Lemma \ref{lem:open_to_closed} holds up to signs.  To check it in the simplest situation, we let $d=1$, and note that in this case $\dagger  = \deg(x_d) $, so that $\OC_{1}$ agrees with the sum of the maps $\OC_u$.    The boundary of $\Disc_{1}^1(y_0; x_1)  $ is covered by
\begin{align*}
&  \Disc_{1}^1(y_0, x_0) \times    \Disc( x_0; x_1 ) \\
&  \Cyl(y_0; y_1) \times     \Disc_{1}^1(y_1; x_1) .
\end{align*}
In the first case, the boundary orientation is given by an isomorphism

\begin{equation*}
  o_{x_1}  \cong   \lambda^{-1}( \Disct(x_0;x_1) )   \otimes \lambda(\Disc_{1}^{1}) \otimes  o_{y_{0}} \otimes  \lambda(L_0).
\end{equation*}
Keeping in mind that the translation by $\partial_s$  agrees with an inward pointing normal vector after gluing, we conclude that the  restriction of  $\OC \circ b$ to $CC_1(\cB,\cB)$, which is given by  $- \OC \circ \mu_1$ agrees with the map induced at the boundary up to a sign whose parity is
\begin{equation}\label{eqsign_check_OC-1}
  1 + \deg(x_1).
\end{equation}
On the other boundary component, 

\begin{equation*}
  o_{x_1} \cong \lambda(\Disc_{1}^{1}) \otimes    \lambda^{-1}( \Cylt(y_0;y_1) )   \otimes  o_{y_0}  \otimes  \lambda(L_0) .
\end{equation*}
So that $ \partial \circ \OC $ agrees with the map induced at the boundary up to a sign of parity
\begin{equation}
  \deg(y_0) = \deg(x_1) + n.
\end{equation}
Comparing this with Equation \eqref{eqsign_check_OC-1}, we conclude that $ \OC_{1}$ is indeed a degree $n$ chain map.
\section{The Cardy relation} \label{sec:cardy-relation}

The next few sections construct the appropriate chain-level models for the maps described in Equations \eqref{eq:map_HH_tensor_over_A} and \eqref{eq:multiplication_right_left}, before giving the proof of Proposition \ref{prop:cardy_relation}.

\subsection{Algebraic preliminaries}\label{sec:algebr-prel}
Let $\cL$ and $\cR$ be respectively left and right $\cB$ modules.  The tensor product of $\cR$   and $\cL$ over $\cB$ is defined to be the chain complex
\begin{equation}
  \label{eq:tensor_over_A}
\bigoplus_{L_0, \ldots, L_{d} \in \Ob(\cB)}   \cR(L_{d}) \otimes CW^{*}(L_{d-1}, L_{d}) \otimes \cdots \otimes CW^{*}(L_{0}, L_{1})  \otimes \cL(L_0) 
\end{equation}
with grading
\begin{equation*}
  \label{eq:degree_tensor_over_A}
  \deg(p \otimes a_d \otimes \ldots \otimes a_1 \otimes q) = \deg(p) + \sum ||a_{i}|| + \deg(q)
\end{equation*}
and differential
\begin{multline*}
  p \otimes a_d \otimes \ldots \otimes a_1 \otimes q \mapsto \sum p \otimes  a_{d} \otimes \cdots \otimes  a_{\ell+1} \otimes \mu^{\ell|1}(a_{\ell}, \ldots, a_{1}, q) \\
+  \sum (-1)^{\deg(q) + \maltese_{1}^{\ell}} \mu^{1|d-\ell} (p ,  a_{d},  \ldots,   a_{\ell+1}) \otimes a_{\ell} \otimes  \cdots \otimes a_{1} \otimes q \\
+  \sum (-1)^{\deg(q) + \maltese_{1}^{\ell}}p  \otimes a_{d} \otimes \cdots \otimes a_{\ell+k+1} \otimes \mu^{k}(a_{\ell+k},   \ldots,   a_{\ell+1}) \otimes a_{\ell} \otimes  \cdots \otimes a_{1} \otimes q 
\end{multline*}

The map induced by $\Delta$ at the level of Hochschild chains is given by
\begin{multline}
  \label{eq:hochschild_map_induced_by_coproduct}
CC_{*}(\Delta) (a_{d} \otimes \ldots \otimes a_{1}) = \\ \sum (-1)^{\diamond} \sT\left( \Delta^{r|1|s}(a_{r}, \ldots, a_{1}, \underline{a}_{d}, a_{d-1}, \ldots, a_{d-s}) \otimes a_{d-s-1} \otimes \cdots \otimes a_{r+1} \right)
\end{multline}
where  $\sT$ is the maps which reorders the factors
\begin{equation*}
\sT( q \otimes p \otimes a_{d-s-1} \otimes \cdots \otimes a_{r+1}) = (-1)^{\circ} p \otimes a_{d-s-1} \otimes \cdots \otimes a_{r+1} \otimes q
\end{equation*}
and the signs are given by the formulae
\begin{align}
\label{eq:sign_delta_induce_cyclic} \diamond & = \maltese_{1}^{r} \cdot (1+ \maltese_{r+1}^{d})  + n \maltese_{r+1}^{d-s-1}   \\
  \label{eq:sign_reorder} \circ & = \deg(q) ( \deg(p)  + \maltese_{r+1}^{d-s-1}) .
\end{align}
We define $HH_{*}(\Delta) $  to be the map induced by  $ CC_{*}(\Delta) $  on homology groups.  Note that since $\Delta$ is a chain map of degree $n$, so is $CC_{*}(\Delta)$.

The remaining map in the diagram \eqref{eq:cardy_relation} comes from the chain-level composition
\begin{align}
  \label{eq:composition_left_right}
\mu \co \cY^{r}_{K} \otimes_{\cB} \cY^{l}_{K} & \to  HW^{*}(K,K) \\
p \otimes a_{d} \otimes \cdots \otimes a_{1} \otimes q & \mapsto (-1)^{\deg(q) + \maltese_{1}^{d}}  \mu^{d+2}(p, a_{d}, \ldots, a_{1}, q)
\end{align}
which may be easily verified to be a chain map (of degree $0$).

\subsection{Construction of the first homotopy}
In order to prove Proposition \ref{prop:cardy_relation}, we must construct a homotopy between the two compositions in the following square:
\begin{equation}
  \label{eq:chain_level_cardy_relation}
 \xymatrixcolsep{5pc}  \xymatrix{ CC_{*}(\cB, \cB) \ar[r]^{CC_{*}(\Delta)}  \ar[d]^{\OC} &  \cY^{r}_{K} \otimes_{\cB} \cY^{l}_{K} \ar[d]^{\mu} \\
SC^{*}(M) \ar[r]^{\CO}  & CW^{*}(K,K).}
\end{equation}

In particular, we must construct a map
\begin{equation}
  \label{eq:homotopy_from_annuli}
  \cH \co  CC_{*}(\cB, \cB)  \to  CW^{*}(K,K)
\end{equation}
satisfying
\begin{equation}
  \label{eq:homotopy_equation}
  (-1)^{n} \mu^{1} \circ \cH + \cH \circ b + \mu \circ CC_{*}(\Delta) - \CO \circ \OC = 0.
\end{equation}

Unfortunately, we shall have to construct this homotopy in two steps, because the compatibility conditions between the Floer data $\bfD_{\Delta} $ and $\bfD_{\mu}$  were meant to ensure that $\Delta$ defines a map of $A_{\infty}$-bimodules, but did not incorporate any other restrictions, so that while the composition $ \mu \circ CC_{*}(\Delta) $ can be interpreted as a count of broken curves, these cannot  be glued in general.  The problem does not occur for the simplest possible gluing (the bottom configuration in Figure \ref{fig:Cardy}), but already appears in the next simplest situations in Figure \ref{fig:gluing_problem} which shows two sets of broken curves, with weights labelling the ends of each component.  To keep the figures uncluttered, we have not tried to match the ends that need to be glued.  Note that on the right side, gluing can only be performed if $w_0=w_{-1}$ while on the left $w_1 = w_3$ is required.  It should be apparent from Figures \ref{fig:floer_data_three_punctures} and \ref{fig:higher_coprod_data} that these conditions cannot hold in general.  Moreover, changing the conformal constants will not resolve this problem.

\begin{figure}
  \centering
     \includegraphics{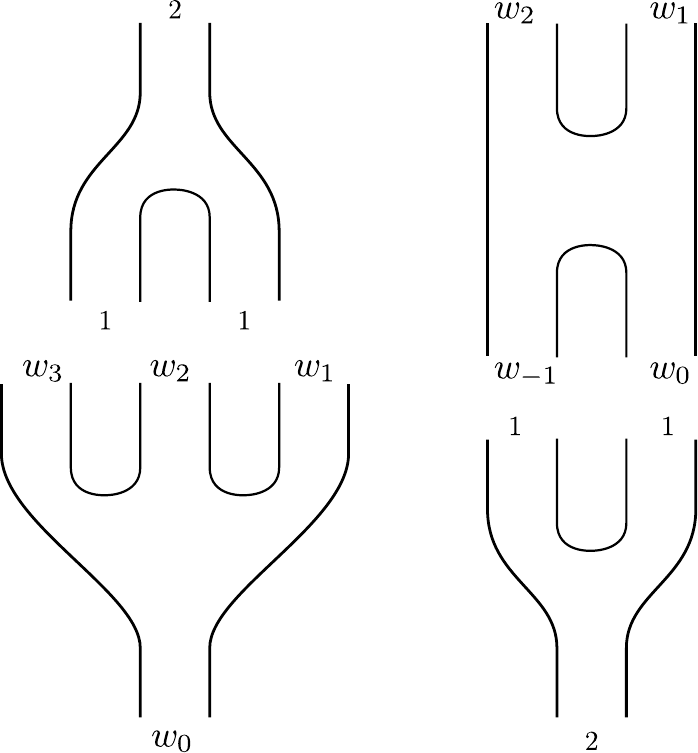}
  \caption{ }
  \label{fig:gluing_problem}
\end{figure}

Our first task is therefore to show that $ \mu \circ CC_{*}(\Delta) $  is homotopic to a map counting broken curves for which we may apply the usual gluing theory.  We shall do this by considering a family where the weights are allowed change.  The endpoint of this homotopy is a map $ CC_{*}(\cB, \cB)  \to  CW^{*}(K,K) $ which has no reason to factor on the nose through  $\cY^{r}_{K} \otimes_{\cB} \cY^{l}_{K}$.  Not having to factor through this space gives us the extra flexibility to correct the weight as necessary.  The space parametrising the curves counted in this homotopy is described in Section \ref{sec:an-auxiliary-moduli} and is of the form:
\begin{equation*}
  \Product_{d} = [0, 1] \times \bigcup_{r+s+d_1 - 1 =  d} \Discbar_{d-r-s+1} \times \Discbar_{r|1|s} / \sim
\end{equation*}

\begin{defin} \label{def:floer_datum_product_moduli_space}
A \emph{Floer datum} $D_{S_1}(t,S_2)$ on an element $(t, S_1 ,  S_2)$  of $\Product_{d}$ consists of a Floer datum on $S_1 \in  \Discbar_{d-r-s+1}$ in the sense of Definition \ref{def:floer_datum_disc_1_output}.  We write $H_{S_1}(t,S_2)$, $w_{k,S_1}(t,S_2)$ and so on for the maps and constants that constitute $D_{S_1}(t,S_2)$.

A   \emph{universal and conformally consistent}  choice of Floer data for the first homotopy is a choice  $\bfD_{\cH^1}$ of Floer data  for every integer $d \geq 1$, and every (representative) of an element of  $\Product_{d}$ which varies smoothly over the compactified moduli space, and is compatible with  $\bfD_{\mu}$  in the following sense:
\begin{enumerate}
\item The restriction of $\bfD_{\cH^1}$  to $(1, S_1 , S_2)$  agrees with $\bfD_{\mu} $ on $S_1$. 
\item The restriction of $\bfD_{\cH^{1}}$ to a component of $S_1$ which does not contain the outgoing end is conformally equivalent to the restriction of $\bfD_{\mu}$.
\item The two Floer data on a disc lying on the boundary stratum \eqref{eq:boundary_product_disc_bubbling} are conformally equivalent.
\end{enumerate}
In addition, the ratio between the weights on the first and last incoming ends for the restriction of $\bfD_{\cH^1}$  to $(0, S_1 ,  S_2)$ agrees with the ratio of the weights  coming from $\bfD_{\Delta}$ on the  corresponding outgoing ends of $S_2$: 
\begin{equation} \label{eq:ratio_weights_equal}
\frac{ w_{1,S_1}(0,S_2)}{ w_{d-r-s+1,S_1}(0,S_2)} =   \frac{ w_{0,S_2}}{ w_{-1,S_2}}.
\end{equation}

We write $\bfD_{f}$ for the restriction of $\bfD_{\cH^1}$ to $\Product_{d}^{0}$.
\end{defin}
As usual, the construction of universal and consistent data may be done inductively.  One starts by choosing Floer data whenever $d=1$ and $t=0$; in this situation, $S_{1}$ is necessarily a disc with two inputs.  Except for Condition \eqref{eq:ratio_weights_equal}, the choice of the data on such a disc is arbitrary.  The first consistency condition in Definition \ref{def:floer_datum_product_moduli_space} determines a choice of Floer data whenever $t=1$, and we interpolate arbitrarily between these two choices.

Assuming a choice of data has been fixed for integers smaller than $d$, we note that the third condition in Definition \ref{def:floer_datum_product_moduli_space} determines a choice of Floer data on the closure of the locus of $\partial \Product_{d}$ where $t \neq 0,1$.  On the intersection of this stratum with the set $t =0$, Condition \eqref{eq:ratio_weights_equal} holds by induction.  We first extend the choice of Floer data on the remainder of the stratum $t=0$ in such a way that Condition \eqref{eq:ratio_weights_equal} still holds, then interpolate between this choice and the one fixed  by the first consistency condition at $t=1$.  This provides an inductive construction of Floer data $\bfD_{\cH^1}$.

Consider a collection of Lagrangians $L_0, L_1, \ldots,  L_d$ with $L_d = L_0$  and a sequence of chords $\vx = \{ x_1, \ldots, x_d \}$ with  $x_k \in \Chord(L_{k-1},L_k)$ for $1 \leq k \leq d$, as well as a chord $x_0 \in \Chord(K,K)$.  We define the moduli space $ \Product_{d}(x_0; \vx) $ to be the union,  for all positive integers $r$ and $s$ such that $0 \leq r+s <d$, and over all pairs of chords
 \begin{equation*} (y_{-1}, y_0 ) \in \Chord(K, L_{r}) \times \Chord(L_{d-s-1} ,K)  \end{equation*}
and $t \in [0,1]$ of the moduli space of maps $(u_1,u_2) \co S_1 \cup S_2 \to M$ with  $u_2 \in \Discbar_{r|1|s}(y_{-1}, y_0 ; \vx[r]_{1}, \underline{x_d}, \vx[d-1]_{d-s}) $ and $u_1$ is a solution to Equation \eqref{eq:dbar_pair_pants} for the Floer datum $D_{S_1}(t,S_2)$ with asymptotic data $(y_{-1}, \vx[d-s-1]_{r+1}, y_{0})$ at the inputs, and $x_0$ at the output.  
\begin{rem}
Since the Floer datum on $ S_1 $  depends on $S_2$, there is no natural way to describe   $  \Product_{d}(x_0; \vx)  $  as a union of  products of the moduli spaces $  \Discbar_{r|1|s}(y_{-1}, y_0 ; \vx[r]_{1}, \underline{x_d}, \vx[d-1]_{d-s}) $  with another moduli space.
\end{rem}

\begin{lem} \label{lem:product_moduli_space_manifold}
For generic Floer data  $\bfD_{\cH^1}$, the moduli space $  \Product_{d}(x_0, \vx)  $  is a compact manifold with boundary of dimension
\begin{equation*}
 \deg(x_0) - n + d  - \sum_{k=1}^{d} \deg(x_k).
\end{equation*}\qed
\end{lem}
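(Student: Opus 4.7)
The plan is to follow the standard recipe for the manifold-with-boundary structure of Floer-type moduli spaces: verify transversality of the parametrised $\bar\partial$ problem, control escape to infinity via Lemma~\ref{lem:stay_in_compact_set}, and compute the index by summing contributions from the three factors.

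For transversality, I would run the usual Sard--Smale argument on the universal moduli space obtained by varying $\bfD_{\cH^1}$. The Floer data on $u_2$ are already fixed by $\bfD_\Delta$ and yield transversality of the $u_2$-equation by Lemma~\ref{lem:moduli_space_2_outputs_manifold_boundary}; meanwhile $D_{S_1}(t, S_2)$ may be perturbed freely as a function of $(t, S_2)$ away from the boundary of $\Product_d$, where its values are pinned down by the consistency conditions of Definition~\ref{def:floer_datum_product_moduli_space}. Thus for each rigid or positive-dimensional $u_2$ we can make the parametrised $u_1$-equation transverse. For compactness, Lemma~\ref{lem:stay_in_compact_set} applies to both components separately: the Hamiltonians restrict to (rescaled) quadratic functions of $r$ on the cylindrical end, so one can choose $R$ with $\{R\}\times\partial M$ separating all asymptotic chords from infinity and conclude $u_i^{-1}([R,+\infty)\times\partial M)=\emptyset$. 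The same energy bound implies that only finitely many pairs $(y_{-1}, y_0)$ contribute, after which Gromov compactness together with the description of the strata of $\Product_d$ in Section~\ref{sec:an-auxiliary-moduli} identifies $\partial \Product_d(x_0;\vx)$ with the claimed codimension-$1$ boundary.

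For the dimension, the $u_2$-factor contributes (by Lemma~\ref{lem:moduli_space_2_outputs_manifold_boundary})
\[
\deg(y_0)+\deg(y_{-1})-n+r+s-\deg(x_d)-\sum_{k=1}^{r}\deg(x_k)-\sum_{k=d-s}^{d-1}\deg(x_k),
\]
the $u_1$-factor contributes
\[
\deg(x_0)+(d-r-s+1)-2-\deg(y_{-1})-\deg(y_0)-\sum_{k=r+1}^{d-s-1}\deg(x_k),
\]
and the parameter $t\in[0,1]$ adds $1$. Summing gives $\deg(x_0)-n+d-\sum_{k=1}^d\deg(x_k)$, as claimed.

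The step I expect to be the main obstacle is checking that $\Product_d(x_0;\vx)$ is a genuine manifold with boundary near configurations where the $u_2$-component itself sits in a positive-codimension stratum of $\Discbar_{r|1|s}$, or where $t=0$ meets the interior of such a stratum. There the chart coming from strip-like gluing parameters on $S_2$, gluing parameters on $S_1$, and the $t$-coordinate must be shown to assemble into a corner chart compatible with the quotient $\sim$ in the definition of $\Product_d$. This is precisely where the matching condition \eqref{eq:ratio_weights_equal} is needed: it guarantees that at $t=0$ the weights on the ends to be glued actually coincide, so standard $A_\infty$-bimodule gluing applies and no further obstruction appears. The remaining details are bookkeeping extensions of the gluing arguments already invoked for $\mu^d$ and $\Delta^{r|1|s}$.
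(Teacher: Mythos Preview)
Your argument is correct and follows exactly the standard recipe the paper has in mind; indeed the paper gives no proof at all, marking the lemma with a bare \qed, so your sketch is already more detailed than what appears there.

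One correction to your final paragraph: condition~\eqref{eq:ratio_weights_equal} plays no role in this lemma. In $\Product_d(x_0;\vx)$ the maps $u_1$ and $u_2$ are never glued to one another---they are separate solutions sharing only the asymptotic chords $y_{-1},y_0$---so no weight compatibility between the outgoing ends of $S_2$ and the incoming ends of $S_1$ is needed for the manifold-with-boundary structure here. The ratio condition is imposed so that, in the \emph{next} section, the stratum $\Product_d^0(x_0;\vx)$ can be matched with a boundary stratum of $\Annbar^{-}_d(x_0;\vx)$ by gluing $u_1$ to $u_2$ into an honest annulus; that is where the weights must agree. The smoothness near strata where $S_2$ degenerates is instead handled by the second consistency condition in Definition~\ref{def:floer_datum_product_moduli_space} together with the identification $\sim$ in~\eqref{eq:disjoint_union_manifolds}, which ensure that a breaking of $S_2$ at an outgoing end is the same boundary face as the corresponding breaking of $S_1$ at an incoming end.
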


Writing $ \Product_{d}^{0}(x_0; \vx)$ for the subset of $ \Product_{d}(x_0; \vx) $ where $t=0$, we find that  the consistency conditions imposed on  $\bfD_{\cH^1}$ imply that the boundary of $\Product_{d}(x_0; \vx) $  is covered by the closures of the images of natural inclusions of the following moduli spaces:
\begin{align} \notag
\Product_{d}^{0}(x_0; \vx)  & \\ \notag
\Disc_{r|1|s}(y_{-1}, y_0 ; \vx[r]_{1}, \underline{x_d}, \vx[d-1]_{d-s})  \times \Disc_{d-r-s+1}(x_0; y_{-1}, \vx[d-s-1]_{r+1}, y_{0}) & \quad (y_{-1}, y_0 ) \in \Chord(K, L_{r})  \\ \notag
\Disc(x_0; x) \times \Product_{d}(x; \vx)    & \quad x  \in \Chord(K, K)  \\
\Product_{d_1}(x_0; \vx_{1}) \times  \Disc_{d-d_1+1}(x; \vx_2)  & \quad x \in \Chord(L_{i}, L_{i+j})  
\label{eq:codim_1_stratum_product_moduli_space_CC_diff} 
\end{align}
where in the last type of stratum, $i+j$ is computed modulo $d$,  $x$ agrees with one of the elements of $\vx[1] $, and the sequence obtained by removing $x$  from $\vx[1]$ and replacing it by the sequence $\vx[2]$ agrees with $\vx$ up to cyclic ordering.    Taking the intersection of the last two type of strata with the boundary of  $\Product_{d}^{0}(x_0; \vx) $, we find that $ \partial  \Product_{d}^{0}(x_0; \vx) $  is covered by the closures of codimension $1$ strata:
\begin{align} \label{eq:boundary_moduli_space_f_1}
\Disc(x_0; x)  \times  \Product_{d}^0(x; \vx) &  \quad x  \in \Chord(K, K)  \\ \label{eq:boundary_moduli_space_f_2}
\Product_{d_1}^{0}(x_0; \vx_{1}) \times  \Disc_{d-d_1+1}(x; \vx_2)  & \quad x \in \Chord(L_{i}, L_{i+j}).
\end{align}

Although $\Product_{d}^0(x_0; \vx)  $ is not a product, Equations \eqref{eq:orientation_higher_product} and \eqref{eq:orientation_coproduct} along with our preferred orientations of $\Discbar_{r|1|s}$ and $\Discbar_{d-r-s+1}$ imply that every rigid pair $(u_1,u_2) \in \Product_{d}^0(x_0; \vx) $ induces a map
\begin{equation*}
 o_{x_d} \otimes \cdots \otimes o_{x_1} \to o_{x_0}.
\end{equation*}
Writing $f_{(u_1,u_2)}$ for the map induced on orientation lines, we define
\begin{equation}
  \label{eq:correct_composition_to_glue}
  f = \sum (-1)^{\deg(x_d) + \dagger_{1} + \ddagger_{2} + \circ + \diamond} f_{(u_1,u_2)} 
\end{equation}
where $\dagger_{1}$ is the sign appearing in Equation \eqref{eq:dagger_sign}, applied to $u_1$, and $\ddagger_2$ comes from Equation \eqref{eq:ddagger_sign} applied to $u_2$, and the other two signs come from Equations \eqref{eq:sign_reorder} and \eqref{eq:sign_delta_induce_cyclic}.

\begin{lem}
  $f$ is a chain map of degree $n$, which is homotopic to the composition $\mu \circ CC_{*}(\Delta)$.
\end{lem}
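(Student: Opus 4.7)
The plan is to construct a degree $n-1$ chain homotopy $\cH^{1}\co CC_{*}(\cB,\cB)\to CW^{*}(K,K)$ between $f$ and $\mu\circ CC_{*}(\Delta)$; once such a homotopy is in place, the chain-map property of $f$ (of degree $n$) will follow from that of $\mu\circ CC_{*}(\Delta)$. I define $\cH^{1}$ by counting, with signs patterned on \eqref{eq:correct_composition_to_glue}, the rigid elements $(u_{1},u_{2})$ of $\Product_{d}(x_{0};\vx)$; by Lemma \ref{lem:product_moduli_space_manifold}, rigidity corresponds to $\deg(x_{0}) = n-d+\sum\deg(x_{k})$, which, after accounting for the Hochschild grading shift of $d-1$, translates into a global degree shift of $n-1$, as required for a homotopy between two maps of degree $n$.

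With $\cH^{1}$ defined, I would examine the boundary of a $1$-dimensional component $\Product_{d}(x_{0};\vx)$, characterised by $\deg(x_{0}) = n-d+1+\sum\deg(x_{k})$; the signed count of its boundary points must vanish. The four families of codimension-$1$ strata listed just before the statement of the present lemma admit the following algebraic interpretations. The stratum $\Product_{d}^{0}(x_{0};\vx)$ at $t=0$ contributes $f$ by construction. The $t=1$ stratum of type $\Disc_{r|1|s}\times\Disc_{d-r-s+1}$ contributes $\mu\circ CC_{*}(\Delta)$, because by Condition (1) of Definition \ref{def:floer_datum_product_moduli_space} the Floer data on $S_{1}$ at $t=1$ coincide with $\bfD_{\mu}$ and those on $S_{2}$ always come from $\bfD_{\Delta}$, so that each such broken configuration is a pair of rigid curves counted by $\mu^{d-r-s+1}\circ\Delta^{r|1|s}$, which precisely reassembles \eqref{eq:hochschild_map_induced_by_coproduct} followed by \eqref{eq:composition_left_right}. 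The two remaining strata contribute $\mu^{1}\circ\cH^{1}$ (strip breaking at the output $x_{0}$) and $\cH^{1}\circ b$ (strip breaking at one of the $d$ cyclically-ordered inputs); the cyclic symmetry of the inputs of $S_{1}$, combined with the identification of $\underline{x}_{d}$ as the ``$d$-th'' entry of a Hochschild chain, is what produces both the cyclic and the non-cyclic parts of the Hochschild differential \eqref{eq:hochschild_differential}.

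Assembling these contributions yields an identity of the shape
\begin{equation*}
 f \;-\; \mu\circ CC_{*}(\Delta) \;+\; (-1)^{n}\mu^{1}\circ\cH^{1} \;+\; \cH^{1}\circ b \;=\; 0,
\end{equation*}
which exhibits $\cH^{1}$ as a chain homotopy of degree $n-1$; since $\mu\circ CC_{*}(\Delta)$ is a chain map of degree $n$, so is $f$. The main obstacle is the sign bookkeeping. The constants $\dagger_{1}$, $\ddagger_{2}$, $\circ$, and $\diamond$ in the definition \eqref{eq:correct_composition_to_glue} of $f$ are tuned precisely so that each boundary stratum contributes with the sign predicted by its algebraic interpretation. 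Verifying this reduces, stratum by stratum, to comparing the natural boundary orientation of $\Product_{d}(x_{0};\vx)$ with the product orientation assembled from \eqref{eq:orientation_higher_product} and \eqref{eq:orientation_coproduct}, and then matching the resulting parities against the $\maltese_{*}^{**}$ conventions in \eqref{eq:infinity_bimodule_equation} and in \eqref{eq:hochschild_differential}, exactly as in the simplest instance carried out in Lemma \ref{lem:Delta_chain_map}.
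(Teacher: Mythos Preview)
Your proposal is correct and follows essentially the same geometric argument as the paper: you count rigid elements of $\Product_{d}(x_{0};\vx)$ to produce the homotopy, and you identify the four boundary strata with $f$, $\mu\circ CC_{*}(\Delta)$, $\mu^{1}\circ\cH^{1}$, and $\cH^{1}\circ b$ exactly as the paper does. The only organisational difference is that the paper proves the chain-map property of $f$ \emph{directly}, by examining the boundary of the $1$-dimensional $\Product_{d}^{0}(x_{0};\vx)$ (whose two strata \eqref{eq:boundary_moduli_space_f_1} and \eqref{eq:boundary_moduli_space_f_2} give $\mu^{1}\circ f$ and $f\circ b$), and then separately uses the full $\Product_{d}(x_{0};\vx)$ for the homotopy; you instead infer the chain-map property from the homotopy equation together with the chain-map property of $\mu\circ CC_{*}(\Delta)$. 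Both routes are valid and amount to the same content.
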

\begin{proof}
The two types of strata appearing in Equations \eqref{eq:boundary_moduli_space_f_1} and \eqref{eq:boundary_moduli_space_f_2} correspond respectively to the compositions $\partial \circ f$ and $f \circ b$ which implies that $f$ is a chain map.   The decomposition of the boundary of  $ \Product_{d}(x_0, \vx) $  given after Lemma \ref{lem:product_moduli_space_manifold} shows that the count of rigid elements of $ \Product_{d}(x_0, \vx) $ defines a homotopy between $f$ and $\mu \circ CC_{*}(\Delta)$.  To check the signs, note that the expression appearing in Equation \eqref{eq:correct_composition_to_glue} reflects the sum of all the signs by which the composition $\mu \circ CC_{*}(\Delta^{r|1|s}) $ differs from the map induced by the product orientation on $ \Product_{d_1}^{0}(x_0; \vx_{1}) \times  \Disc_{d-d_1+1}(x; \vx_2)  $. 
\end{proof}

\subsection{Construction of the second homotopy}

In this section, we define a homotopy between the map $f$ constructed in the previous section, and the composition $\CO \circ \OC$.  We start by making auxiliary choices to perturb the Cauchy-Riemann equation on elements of a moduli spaces of annuli described in Appendix \ref{sec:annuli}:
\begin{defin} \label{def:floer_datum_annnuli}
A \emph{Floer datum} $D_{S}$ on a stable annulus $S \in \Annbar^{-}_{d}$ with $d$ positive boundary punctures $(\xi^1, \ldots, \xi^d) $ and one negative boundary puncture $\xi^{0}$ consists of the following choices on each component:
\begin{enumerate}
\item Strip-like ends near the punctures:  we have  maps $\epsilon^k \co Z_{+} \to S$ for each $1 \leq k \leq d$ converging at the end of the half-strip $Z_{+}$ to $\xi^{k}$, and a map $\epsilon^0 \co Z_{-} \to S  $ converging to  $\xi^{0}$.
\item Time shifting map:  A map $ \rho_{S} \co \partial \bar{S} \to [1,d]$ which is constant near each puncture.  We write $w_{k,S}$ for the value on the $k$\th end and set  
  \begin{equation}
    w_{0,S} = \sum_{k=1}^{d} w_{k,S}.
  \end{equation}
\item Basic $1$-form and Hamiltonian perturbations:  A closed $1$-form $\alpha_{S}$ whose restriction to the boundary vanishes and a map $H_{S} \co S \to \sH(M)$  such that the pullback of $ X_{H_S} \otimes \alpha_{S} $ under $ \epsilon^k $ agrees with  $  X_{\frac{H}{w_{k,S}} \circ \psi^{w_{k,S}}} \otimes dt$.
\item Cutoff $1$-forms: A pair of $1$-forms $\beta_{S}$ and $\gamma_{S}$ which are multiples of $\alpha_{S}$ (by smooth functions), satisfying $d \gamma_{S} \leq 0$ and such that the locus where $d \beta_{S} \geq 0$ is properly contained in the support of $d \gamma_{S}$.   In addition, the pullback of $\gamma_{S}$ under $\epsilon^{0}$ agrees with   
  \begin{equation*}
 \delta_{S} \cdot w_{0,S} \cdot dt 
  \end{equation*}
 for some non-negative constant $\delta_S $, and  $\beta_{S}$ and $\gamma_{S}$  otherwise vanish near the ends.
\item Almost complex structures:  A map $I_{S} \co S \to \sJ(M)$ whose pullback under $ \epsilon^k $ agrees with  $ (\psi^{w_{k,S}})^{*} I_t $ unless $k=0$ in which case it agrees with  $ (\psi^{(1+\delta_S)w_{0,S}})^{*} I_{S^1}$. 
\end{enumerate}
\end{defin}

If $S$ lies on the image of a gluing map (see Equation \eqref{eq:boundary_moduli_annuli_boundary_bubbling}),
\begin{equation}
\Discbar_{d-r-s+1} \times \Discbar_{r|1|s} \to \partial  \Annbar^{-}_{d}
\end{equation}
then we set $\beta_{S} = \gamma_{S} = 0$, and the universal Floer data $\bfD_{f} $  and $\bfD_{\Delta}$ determine the remaining data for $D_{S}$.  

On the other hand, if $S$ lies on the stratum (\ref{eq:boundary_moduli_annuli_interior_breaking}), we set $\gamma_{S}$ to vanish on the component of $S$ lying in  $\Discbar_{d}^{1} $, and use $\bfD_{\OC}$ to define the remaining Floer data.  On the other component (a disc with an interior and a boundary puncture), we use data conformally equivalent to the one fixed in the discussion preceding Equation (\ref{eq:boundary_disc_1_interior_1_boundary_output}).

\begin{defin}
A   \emph{universal and conformally consistent}  choice of Floer data for the second homotopy  is a choice  $\bfD_{\cH^2}$ of Floer data  for every integer $d \geq 1$, and every representative of an element of  $\Annbar^{-}_{d}$ which varies smoothly over the compactified moduli space, such that the two natural Floer data on any irreducible component of a singular annulus are conformally equivalent, and such that near every boundary stratum, $\bfD_{\cH^2}$ agrees to infinite order with the data obtained by gluing.
\end{defin}

Consider a collection of Lagrangians $L_0, L_1, \ldots,  L_d$ with $L_d = L_0$  and a sequence of chords $\vx = \{ x_1, \ldots, x_d \}$ with  $x_k \in \Chord(L_{k-1},L_k)$ for $1 \leq k \leq d$, as well as a chord $x_0 \in \Chord(K,K)$.  We define the moduli space $ \Ann^{-}_{d}(x_0; \vx) $ to be the space of maps $u \co S \to M$ whose source is an arbitrary element of $  \Ann^{-}_{d} $, with asymptotic conditions $  \psi^{w_{k,S}}  \circ  x_k  $ at the $k$\th  incoming ends and $\psi^{(1+\delta_S)w_{0,S}} \circ x_0$ at the outgoing end, with boundary condition $   \psi^{\rho_{S}}L_{k} $ between $\xi^{k-1}$ and $\xi^{k}  $ if $1 \leq k \leq d$, and    $   \psi^{(1+\delta_S)\rho_{S}} K$ on the boundary component containing the outgoing marked point,  and which solve the Cauchy-Riemann equation
\begin{equation}
\label{eq:dbar_equation_homotopy}
\left( du -   X_{H_S}   \otimes \alpha_{S} -  \delta_S \cdot X_{\frac{F \circ \psi^{w_{0,S}}}{w_{0,S}}} \otimes \beta_{S} -   \left( X_{ \frac{H_S}{1+\delta_S} \circ \psi^{1+\delta_S}}  - X_{H_S}  \right) \otimes  \gamma_S  \right)^{0,1} = 0.
\end{equation}
Note that the pullback of Equation \eqref{eq:dbar_equation_homotopy} under the ends agrees with the data used in defining the wrapped Floer complexes, up to composition with $\psi^{w_{k,S}} $ for an incoming end, and  $  \psi^{(1+\delta_S)w_{0,S}}  $ for an outgoing end, so that the asymptotic and boundary conditions make sense.  

\begin{lem}
For generic choices of Floer data $\bfD_{\cH^2}$, the Gromov bordification of $ \Ann^{-}_{d}(x_0; \vx)  $ is a compact manifold of dimension
\begin{equation*}
 \deg(x_0) - n + d  - \sum_{k=1}^{d} \deg(x_k)  
\end{equation*}
which, for a fixed sequence $\vx$ is empty for all but finitely many choices of a chord $x_0$.  Moreover, its boundary decomposes into codimension $1$ strata which are the images of natural inclusions of the moduli spaces
\begin{align}\label{eq:boundary_stratum_annuli_differential_CC_0}
\Product_{d}^{0}(x_0; \vx) & \\
\Discbar^{1}(x_0;y_0) \times \Discbar_{d}^{1}(y_0; \vx) & \quad y_0 \in \Orbit \\
\Discbar(x_0; x)  \times \Annbar_{d}^{-}(x;  \vx) & \quad x \in \Chord(K,K) \\
\Annbar_{d_1}^{-}(x_0;  \vx_{1}) \times  \Discbar_{d-d_1+1}(x; \vx_2)  & \quad 1 \leq d_1 < d+1  \textrm{ and } x \in \Chord(L_{i}, L_{i+j}) \label{eq:boundary_stratum_annuli_differential_CC}
\end{align}
where in the last type of stratum, $\vx_1$ and $\vx_2$ are as in Equation (\ref{eq:codim_1_stratum_product_moduli_space_CC_diff}). 
\end{lem}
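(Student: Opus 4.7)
The plan is to parallel the arguments already carried out for Lemmas \ref{lem:moduli_space_2_outputs_manifold_boundary} and \ref{lem:product_moduli_space_manifold}, highlighting only the ingredients new to the annular domain. The four claims --- transversality, dimension, compactness and finiteness, and boundary identification --- can be handled largely independently.

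Regularity and the dimension formula follow from a Sard--Smale argument applied to the universal moduli space cut out by Equation \eqref{eq:dbar_equation_homotopy}, perturbing the almost complex structure $I_S$ within the class allowed by Definition \ref{def:floer_datum_annnuli}. The index count on the annulus with $d$ incoming and $1$ outgoing strip-like ends differs from that of a disc with the same asymptotic data by a single modular parameter, which in the present setup is packaged with the cutoff $\delta_S$; combining with the standard boundary-condition contributions yields the expected dimension $\deg(x_0) - n + d - \sum_{k} \deg(x_k)$.

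For compactness, the only genuine obstruction is escape into the cylindrical end of $M$, and the conditions imposed on $\beta_S$ and $\gamma_S$ in Definition \ref{def:floer_datum_annnuli} are designed precisely to prevent this. Since $d\gamma_S \leq 0$ and the locus $\{d\beta_S > 0\}$ is properly contained in the support of $d\gamma_S$, the dominant $r^2$ growth produced by the term $\bigl(X_{H_S/(1+\delta_S) \circ \psi^{1+\delta_S}} - X_{H_S}\bigr) \otimes \gamma_S$ on the collar controls the uniformly bounded contribution of $F$ paired with $d\beta_S$. Applying Lemma \ref{lem:stay_in_compact_set} to $S = u^{-1}([R,+\infty) \times \partial M)$ for $R$ chosen so that $\{R\} \times \partial M$ separates all chords from infinity gives the $C^0$-bound, and standard Gromov compactness then produces a compact bordification; finiteness of $x_0$ for fixed $\vx$ follows by the same action estimate used in the proof of Lemma \ref{lem:finite_number_outputs_lag}.

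The boundary of the Gromov compactification has two sources: Floer trajectory breaking at the strip-like ends, and degeneration of the annular domain itself. Breaking at the outgoing end contributes $\Discbar(x_0; x) \times \Annbar^{-}_d(x; \vx)$; breaking at an incoming end, together with disc bubbling off an incoming boundary arc, contributes $\Annbar^{-}_{d_1}(x_0;\vx_1) \times \Discbar_{d-d_1+1}(x;\vx_2)$, exactly as in Lemma \ref{lem:product_moduli_space_manifold}. The two strata coming from the abstract boundary of $\Annbar^{-}_d$ (as described in the appendix) are: (i) pinching of a boundary node that connects the two boundary circles, converting the annulus into a disc with two outgoing boundary punctures glued to a disc with inputs --- the weight-ratio prescription \eqref{eq:ratio_weights_equal} together with the vanishing $\beta_S = \gamma_S = 0$ on that locus identifies this stratum with $\Product^{0}_d(x_0;\vx)$; and (ii) pinching of an interior neck, yielding a pair of once-interior-punctured surfaces with a common orbit asymptote $y_0 \in \Orbit$, on which the chosen Floer data agree respectively with the data fixed near Equation \eqref{eq:boundary_disc_1_interior_1_boundary_output} and with $\bfD_{\OC}$, identifying this stratum with $\Discbar^{1}(x_0;y_0) \times \Discbar_{d}^{1}(y_0;\vx)$.

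The main obstacle is the last step: verifying that $\bfD_{\cH^2}$ can genuinely be chosen so that, near each codimension-$1$ stratum, the Floer data agrees to infinite order with that obtained from the prescriptions on the factors. The delicate point is the behaviour of $\delta_S$: it must tend to $0$ on the boundary-node stratum (so that the $F$-perturbation switches off exactly when the annulus becomes a disc, matching the setup for $\Product^{0}_d$) and must grow appropriately on the interior-neck stratum to produce the $S^1$-dependent Hamiltonian dynamics used in $SC^*(M)$. Once this compatibility is in place --- which is possible because the space of allowable Floer data satisfying the listed constraints is contractible, so one can proceed inductively on $d$ exactly as for $\bfD_{\cH^1}$ --- the manifold-with-boundary structure and the listed boundary decomposition follow from the standard gluing package in the wrapped setting.
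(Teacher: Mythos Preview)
Your proposal is correct and follows essentially the same approach as the paper, whose proof is much terser and concentrates almost entirely on the compactness step. The one point the paper makes explicit that you elide is the computation that $\bigl(\tfrac{H_S}{1+\delta_S}\circ\psi^{1+\delta_S}-H_S\bigr)d\gamma_S=\delta_S\,r^2\,d\gamma_S$ on the cylindrical end, so that the $\delta_S$ factors in front of the $\beta_S$ and $\gamma_S$ perturbation terms cancel and the domination of $F\,d\beta_S$ by $r^2\,d\gamma_S$ is uniform in the domain $S$; your phrase ``dominant $r^2$ growth'' is consistent with this but hides the cancellation, and your aside that the modular parameter is ``packaged with the cutoff $\delta_S$'' conflates the annular modulus (a domain parameter) with $\delta_S$ (part of the Floer datum that merely depends on it).
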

\begin{proof}
On every irreducible component of a curve lying in a boundary stratum of the abstract moduli space  $\Annbar^{-}_{d}$, Equation \eqref{eq:dbar_equation_homotopy} agrees (up to applying the Liouville flow for some time) with the Cauchy-Riemann equation coming from the corresponding factor of one of the products appearing in Section \ref{sec:annuli}.  From this, we deduce the Gromov bordification of  $ \Ann^{-}_{d}(x_0; \vx)  $ is given as above.

To prove compactness, we note that $   \left( \frac{H_S}{1+\delta_S} \circ \psi^{1+\delta_S}  - H_S  \right)  d \gamma_S$  agrees with $\delta_S r^{2} d \gamma_{S}$ along the cylindrical end of $M$, and hence dominates $\delta_S \frac{F \circ \psi^{w_{0,S}}}{w_{0,S}} d \beta_{S}$ away from a compact set in $M$ which is independent of the annulus $S$.  In particular, the hypothesis of Lemma \ref{lem:stay_in_compact_set} holds, from which compactness follows.
\end{proof}
 Using the isomorphism \eqref{eq:orientation_annuli}, we find that each rigid element $u \in \Ann^{-}_{d}(x_0; \vx)  $  induces a map $\cH^{2}_{u}$ on orientation spaces, and define
\begin{equation}
  \label{eq:definition_second_homotopy}
  \cH^{2}([x_d], \ldots, [x_1])  = \sum_{\stackrel{\deg(x_0) =n -d  + \sum_{1 \leq k \leq d} \deg(x^k) }{u \in \Disc_{d}^{1}(x_0, \vx) }} (-1)^{\deg(x_d) + \dagger} \cH_{u}^{2}( [x_d], \ldots, [x_1]).
\end{equation}

By comparing the boundary strata listed in Equations \eqref{eq:boundary_stratum_annuli_differential_CC_0}-\eqref{eq:boundary_stratum_annuli_differential_CC} with the formula for the differential defining Hochschild homology, we conclude:
\begin{lem} \label{lem:second_homotopy_sign}
The map $  \cH^{2}$  defines a homotopy between $(-1)^{\frac{n(n+1)}{2}} f$ and the composition $\CO \circ \OC $.  
\end{lem}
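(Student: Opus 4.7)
The plan is the standard $A_\infty$-cobordism argument: study the compactified $1$-dimensional moduli space
\[
\Annbar_d^{-}(x_0;\vx),\qquad \deg(x_0)=n-d+1+\sum_{k=1}^{d}\deg(x_k),
\]
whose algebraic count of boundary points must vanish because it is a compact $1$-manifold. The previous lemma already identifies the codimension-$1$ strata, and each of the four strata in Equations \eqref{eq:boundary_stratum_annuli_differential_CC_0}--\eqref{eq:boundary_stratum_annuli_differential_CC} corresponds term-by-term to one of the four operations appearing in the desired homotopy equation: $\Discbar(x_0;x)\times\Annbar_d^{-}(x;\vx)$ contributes $\mu^1\circ\cH^2$, the stratum $\Annbar_{d_1}^-\times \Discbar_{d-d_1+1}$ contributes $\cH^2\circ b$ (matching the two types of terms in \eqref{eq:hochschild_differential} via cyclic reordering), the stratum $\Discbar^1(x_0;y_0)\times\Discbar^1_d(y_0;\vx)$ contributes $\CO\circ\OC$, and finally $\Product_d^0(x_0;\vx)$ contributes $f$ as defined in \eqref{eq:correct_composition_to_glue}.

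First I would set up the contribution of each stratum assuming the Floer data have been chosen consistently at the boundary (this was arranged in Definition \ref{def:floer_datum_annnuli} via gluing compatibility with $\bfD_{\OC}$, $\bfD_f$, and the data on the disc with one interior and one boundary puncture). This immediately yields the homotopy equation up to overall signs on each of the four terms. Next I would verify three of the four signs by a direct orientation comparison, essentially repeating the bookkeeping already carried out in the proofs of Lemma \ref{lem:Delta_chain_map} and Lemma \ref{lem:open_to_closed}: the cases involving $\mu^1\circ\cH^2$ and $\cH^2\circ b$ are exactly as in the $A_\infty$-bimodule map sign check, and the case involving $\CO\circ\OC$ comes from iterating the sign computation performed after the definition of $\OC_1$ in Section \ref{sec:maps-relating-open}, using Equations \eqref{eq:orientation_SH_to_HW} and \eqref{eq:orientation_HH_to_SH}.

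The hard part will be the sign in front of $f$, which accounts for the factor $(-1)^{n(n+1)/2}$. At the stratum $\Product_d^0(x_0;\vx)\subset\partial\Annbar_d^-$, the annulus degenerates to a nodal configuration whose two components carry the operations $\Delta^{r|1|s}$ and $\mu^{d-r-s+1}$; the operation $f$ was defined by the product orientation on $\Discbar_{r|1|s}\times\Discbar_{d-r-s+1}$, whereas the boundary orientation that $\Annbar_d^-$ inherits at this stratum is obtained by the gluing of two \emph{boundary} punctures of opposite sign. The discrepancy between the product gluing of a disc with $d+1$ boundary ends to a disc with two outputs, versus the annulus obtained by interior gluing from $\Discbar_d^1$ (which agrees with the $\CO\circ\OC$ stratum), is measured by a universal sign depending only on the dimension of $M$: this is where the factor $(-1)^{n(n+1)/2}$ enters. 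I would isolate this sign by specialising to the simplest case $d=0$ (i.e. the lower Cardy picture in Figure \ref{fig:Cardy}), where both broken configurations may be deformed within $\Annbar_0^-$ and the orientation of the resulting cobordism is controlled by the Maslov index of the relative index-$n$ operator on the annulus. The computation reduces to comparing two natural trivialisations of $\lambda(\Annbar_0^-)\otimes\lambda^{-1}(L_0)^{\otimes 2}$ arising from the two gluings; a rank-$n$ bundle is interchanged with its dual, which introduces exactly the sign $(-1)^{n(n+1)/2}$. Because this sign depends neither on $d$ nor on the inputs, the same factor appears uniformly at all strata of type $\Product_d^0$, giving the stated relation $(-1)^n\mu^1\circ\cH^2 + \cH^2\circ b + (-1)^{n(n+1)/2}f - \CO\circ\OC = 0$. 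Once this universal sign is pinned down in the $d=0$ case, the remaining sign checks are routine extensions of those in Lemma \ref{lem:Delta_chain_map}.
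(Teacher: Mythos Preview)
Your overall strategy---studying the boundary of the $1$-dimensional moduli space $\Annbar_d^-(x_0;\vx)$ and matching the four types of strata to the four terms of the homotopy equation---is exactly what the paper does, and the first three sign identifications go through as you indicate.

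There is, however, a gap in your treatment of the crucial $(-1)^{n(n+1)/2}$. First, the minimal case is $d=1$, not $d=0$: the annuli in $\Ann_d^-$ always carry at least one incoming boundary puncture, so $\Annbar_0^-$ does not exist. More importantly, your explanation of the mechanism behind the sign (``Maslov index of the relative index-$n$ operator on the annulus'', ``a rank-$n$ bundle is interchanged with its dual'') is not the right one and would not lead you to the correct computation. What actually happens is far more concrete. At the $\CO\circ\OC$ stratum, combining \eqref{eq:orientation_SH_to_HW} and \eqref{eq:orientation_HH_to_SH} yields an isomorphism involving $\lambda(L_0)$, while at the $\Product_1^0$ stratum the coproduct orientation \eqref{eq:orientation_coproduct} combined with the product orientation \eqref{eq:orientation_higher_product} yields an isomorphism of the same shape but with $\lambda^{-1}(L_0)$ in place of $\lambda(L_0)$. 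To compare the two boundary orientations one must therefore identify $\lambda(L_0)$ with $\lambda^{-1}(L_0)$; the paper does this via $\lambda(L_0)\otimes\lambda(L_0)\cong\lambda(T_pM)$ together with the canonical orientation of $M$ from the symplectic form, and the Koszul sign for reordering $v_1\wedge\cdots\wedge v_n\otimes v_1\wedge\cdots\wedge v_n$ into the standard symplectic volume is precisely $(-1)^{n(n+1)/2}$. Once you see this for $d=1$, the general case follows, as you correctly anticipated, because the discrepancy depends only on the topology of the degeneration and not on the number of inputs.
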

\begin{proof}
Again, we only discuss the case $d=1$; the general situation differs from this simplified one by keeping track of the appropriate Koszul signs. Assuming that there is only one input, there is no extraneous sign on $\CO \circ \OC$, and a simple computation shows that over the stratum  $\Discbar^{1}(x_0;y_0) \times \Discbar_{1}^{1}(y_0; x_1)$, the product orientation induces an isomorphism 
\begin{equation}
  \label{eq:interior_breaking_boundary_orientations}
   o_{x_1} \cong o_{y_0} \otimes  o_{y_{-1}}  \otimes  \lambda(L_0) 
\end{equation}
which agrees with the boundary orientation since the tangent vector $\partial_{r}$ to $ \Ann^{-}_{1} $ points outwards at this boundary point. 

Over the boundary stratum $ \Product_{1}^{0}(x_0; x_1) $ where the annulus breaks at chords $y_0$ and $y_{-1}$, the two discs induce isomorphisms
\begin{align*}
 o_{x_1} & \cong o_{y_{-1}} \otimes  o_{y_{0}} \otimes  \lambda^{-1}(L_0)  \\
 o_{x_0} & \cong o_{y_0} \otimes  o_{y_{-1}}.
\end{align*}
 The sign $\diamond$ with value $\deg(y_0) \deg(y_{-1}) $ is the only one which does not vanish in the definition of $f$, which exactly cancels with the Koszul sign to obtain the isomorphism
\begin{equation}
 o_{x_1} \cong o_{y_0} \otimes  o_{y_{-1}}  \otimes  \lambda^{-1}(L_0) .
\end{equation}
In order to arrive at Equation \eqref{eq:interior_breaking_boundary_orientations}, we use the identification $  \lambda(L_0)  \otimes   \lambda(L_0)  \cong \lambda(M) $ and the orientation of $M$ coming from the symplectic form.  This gives an isomorphism of  $\lambda(L_0)$ with its inverse up to the Koszul sign of $(-1)^{\frac{n(n+1)}{2}}$, and we conclude the desired result. 
\end{proof}

\appendix

\section{A universal twisted complex} \label{sec:univ-twist-compl}
We shall prove Lemma \ref{lem:existence_twisted_complex} in this Appendix; the proof is entirely a matter of homological algebra, and the fact that the categories involved originated from geometry will be irrelevant.  The reader should therefore start by consulting Equation \eqref{eq:tensor_over_A} which shows the bar model for $  \cY^{r}_{K} \otimes_{\cB} \cY^{l}_{K}$. Keeping in mind the fact that the differential is obtained by applying the higher products to a collection of successive morphisms, we find that the subspace $ (\cY^{r}_{K} \otimes_{\cB} \cY^{l}_{K})^{N}$  consisting of all expressions involving only less than $N$ factors forms a subcomplex; this is called the length filtration.

The assumption of Lemma \ref{lem:existence_twisted_complex} is that the identity of $K$ lies in the image of $H^{*}(\mu)$.  Since the length filtration is exhaustive, there must therefore be a positive integer $N$ such that a chain-level representative of the identity lies in the image of the evaluation map
\begin{equation}
  \label{eq:multiplication_words_less_than_ell}
   (\cY^{r}_{K} \otimes_{\cB} \cY^{l}_{K})^{N}  \to CW^*(K,K).
\end{equation}

 For each object $L$ of $\Wrap$, let us from now on consider  $\cY^{r}_{L}$ as a module over $\Wrap$.  Assuming for simplicity that the morphism spaces are finite dimensional,  we have a twisted complex
\begin{equation*} \label{eq:universal_module_length_ell}
  \Univ_{K}^{N} = \left( \bigoplus_{k \leq N} CW^{*}(L_k, K)[1] \otimes CW^*(L_{k-1}, L_{k}) [1] \otimes \cdots \otimes  CW^*(L_{0}, L_{1}) [1] \otimes  \cY^{r}_{L_{0}}, D_{U} \right)
\end{equation*}
where the direct sum is taken over all sequences of objects of $\cB$ whose length is less than $N$, and the differential $D_{U}$ is induced by the higher products in $\Wrap$.  The natural maps
\begin{equation} \label{eq:natural_evaluation_yoneda}
CW^{*}(L_{d-1}, L_{d})[1]  \otimes \cdots \otimes CW^*(L_{0}, L_{1}) [1] \otimes  \cY^{r}_{L_{0}} \to  \cY^{r}_{L_{d}}
\end{equation}
induced by composition in $\Wrap$ contribute some of the terms in $D_{U}$, while the remaining ones shorten the sequence $(L_0, \ldots, L_k, K)$  by applying the higher product to a subsequence:
\begin{equation*}
\mu^{d} \co CW^*(L_{i+d-1}, L_{i+d})[1]  \otimes \cdots \otimes CW^*(L_{i}, L_{i+1}) [1]  \to CW^*(L_{i}, L_{d})[1].
\end{equation*}
The reader familiar with the literature on Fukaya categories of Lefschetz fibrations will have encountered this construction of a twisted complex, e.g. in the proof of Proposition 6.2 of \cite{maydanskiy-seidel}.

Note that by replacing $L_d$ by $K$ in Equation \eqref{eq:natural_evaluation_yoneda} we obtain a collection of maps which define a closed morphism
\begin{equation*}
  \Univ_{K}^{N} \to  \cY^{r}_{K}.
\end{equation*}
Composition with this morphism defines a chain map
\begin{equation}
\label{eq:composition_map_modules}
\Hom_{\mod-\Wrap}( \cY^{r}_{K},     \Univ_{K}^{N}  )  \to \Hom_{\mod-\Wrap}( \cY^{r}_{K},   \cY^{r}_{K}  ),
\end{equation}
where $\mod-\Wrap$ is the category of $A_{\infty}$ right  modules over $\Wrap$.

On the other hand, the Yoneda Lemma implies that we have a quasi-isomorphism
\begin{equation*}
  (\cY^{r}_{K} \otimes_{\cB} \cY^{l}_{K})^{N}  \stackrel{\sim}{\to} \Hom_{\mod-\Wrap}( \cY^{r}_{K},     \Univ_{K}^{N}  )
\end{equation*}
which fits into a homotopy commutative diagram
\begin{equation}
  \xymatrix{    (\cY^{r}_{K} \otimes_{\cB} \cY^{l}_{K})^{N}  \ar[r] \ar[d] &  \Hom_{\mod-\Wrap}( \cY^{r}_{K},     \Univ_{K}^{N}) \ar[d] \\
CW^*(K,K) \ar[r] &   \Hom_{\mod-\Wrap}( \cY^{r}_{K},  \cY^{r}_{K}).} 
\end{equation}
where the left vertical arrow comes from Equation \eqref{eq:multiplication_words_less_than_ell}, the right vertical arrow from Equation \eqref{eq:composition_map_modules}, and the bottom horizontal arrow from the Yoneda embedding.  Assuming that a chain representative of the identity in $  HW^*(K,K) $  lies in the image of the left vertical arrow, we conclude that if we pass to the cohomological category $H(\mod-\Wrap)$, the identity of $  \cY^{r}_{K}$ lies in the image of  the composition
\begin{equation*}
  \Hom_{H(\mod-\Wrap)}(   \Univ_{K}^{N},  \cY^{r}_{K}  )  \otimes   \Hom_{H(\mod-\Wrap)}( \cY^{r}_{K},     \Univ_{K}^{N}) \to    \Hom_{H(\mod-\Wrap)}( \cY^{r}_{K},     \cY^{r}_{K} ). 
\end{equation*}
In the cohomological category, we have factored the identity of $  \cY^{r}_{K} $ as the product of a morphism from $ \cY^{r}_{K}$ to $  \Univ_{K}^{N}  $ and one in the opposite direction; this implies that $ \cY^{r}_{K}  $ is a summand of $  \Univ_{K}^{N}  $ in the cohomological category, and hence, since idempotents  lift uniquely up to homotopy to the $A_{\infty}$ refinement (see Section (4b) of \cite{seidel-book}), we conclude that $  \cY^{r}_{K} $  is a summand of $ \Univ_{K}^{N} $  as objects of $   \mod-\Wrap $.  Applying the Yoneda Lemma one last time we conclude that $K$ lies in the subcategory of $\Wrap$ split-generated by the objects of $\cB$.

\begin{proof}[Proof of Lemma \ref{lem:existence_twisted_complex}]  
It remains to remove the finite dimensionality assumption required for $\Univ^{N}_{K}$ to be a twisted complex.  To do this, still using the bar complex model for $ \cY^{r}_{K} \otimes_{\cB} \cY^{l}_{K} $, we choose an arbitrary cycle $\tau$  in $\cY^{r}_{K} \otimes_{\cB} \cY^{l}_{K} $ whose image in $ CW^{*}(K,K)$ represents the identity, and note the existence of a finite dimensional free abelian group $V_{L}$ for each object $L$ of $\Wrap$ such that $\tau $  lies in a subcomplex of the form
\begin{equation}
  \bigoplus_{L} V_{L} \otimes CW^{*}(K,L) \subset  \cY^{r}_{K} \otimes_{\cB} \cY^{l}_{K}.
\end{equation}
With this in mind, we construct a twisted complex $  \bigoplus_{L} V_{L} \otimes  L  $  as a subcomplex of $\Univ_{K}^{N} $ carrying the restriction of $D_{U}$.  Replacing  $\Univ_{K}^{N} $ by this twisted complex in all arguments above, we conclude, as desired, that $K$ lies in the subcategory of $\Wrap$ split-generated by the objects of $\cB$ without having to assume finite dimensionality of morphism spaces.
\end{proof}
\section{Action, energy, and compactness}
Let $W$ be an exact symplectic manifold whose Liouville flow is inward pointing along the  boundary $\partial W$, and such that the complement of a compact subset of $W$ is modelled after the positive end of the symplectisation of a contact manifold.  The only example we shall consider is an end $[R,+\infty) \times \partial M $.  Note that in the general case, there is always a neighbourhood of $\partial W$ which is modelled after $[1,+\infty) \times \partial W$, and  we write $r$  for the radial coordinate on this neighbourhood.

Let $L_{W} \subset W$ be an exact Lagrangian  with a primitive $f_W$ vanishing identically on $\partial L_{W} \subset \partial W$.   In all application, $L_{W}$ will be a union of the ends of objects of $\Wrap$.  

Let $\overline{S}$ be a compact Riemann surface with a decomposition of the boundary
\begin{equation}
  \label{eq:partition_boundary}
  \partial \overline{S} = \partial_{n} \overline{S} \cup \partial_{l} \overline{S}
\end{equation}
and let $S$ be the surface obtained by removing $b_-$ interior marked points and $i_-$ boundary marked points lying on $\partial_{l} \overline{S} $.   Fix negative strip-like or cylindrical ends $\{ \epsilon_{i} \}_{i=1}^{i_-+b_-}$ near all the marked points.

Consider a collection of  $1$-forms $\alpha_k$ on $S$ such that $\alpha_k \wedge \alpha_{\ell} =0$, whose pullbacks under the strip-like ends agree with a positive multiple of $dt$ and whose restrictions to $\partial_{l} S$  vanish, as well as functions  $G_k \co  S \times W \to [0,+\infty) $.  We think of $G_k$ as a function on $W$ parametrised by the points of $S$, and assume that its pullback under a strip-like or cylindrical end depends only on the $t$-coordinate, and that it is independent of $S$ in an open neighbourhood of $\partial W$, where is it given by a multiple of the radial coordinate.  In particular, associated to each end of $S$, there is a function  $H_{i}$ on $[0,1] \times M$ or $S^1 \times M$ such that
\begin{equation*}
 \epsilon_{i}^{*}\left( \sum G_{k} \alpha_{k}\right) = H_{i} \otimes dt.
\end{equation*}

Let us fix for each interior puncture a time-$1$ periodic orbit $y_{i}$ of the Hamiltonian flow of $ H_{i} $, and for each boundary puncture a time-$1$ chord $x_i$ of $H_{i}$; we respectively write $\vx$ and $\vy$  for these sets of chords and orbits.    The action of a chord $x_i$ is given by the expression
\begin{equation} \label{eq:action_chords}
  \Laction(x_i) = - \int_{0}^{1} x_i^{*} \lambda  + \int H_i(x(t)) dt + f_W(x_i(1)) - f_W(x_i(0)).
\end{equation}
while that for an orbit is
\begin{equation} \label{eq:action_orbits}
  \Laction(y_i) = - \int_{0}^{1} y_i^{*} \lambda  + \int H_i(y(t)) dt.
\end{equation}

In addition, choose a family $J_{S}$ of almost complex structures on $W$ parametrised by  $S$ which are of contact type near $\partial W$, and define $\Moduli_{S}(\vx, \vy)$ to be the moduli space of maps
\begin{equation}
  v \co S \to W
\end{equation}
mapping $\partial_{n} S$ to $\partial W$ and $\partial_{l} S$ to $L$, which converge at each end to the appropriate chord or orbit,  and which solve the Cauchy-Riemann equation
\begin{equation}
  \label{eq:generalised_dbar}
  \left(dv - \sum X_{G_k}  \otimes  \alpha_{k} \right)^{0,1}  = 0.
\end{equation}
Note that since we are not imposing Lagrangian boundary conditions along $ \partial_{n} S $,  $\Moduli_{S}(\vx, \vy)$  is not a priori finite dimensional, but this will not affect any of our arguments.

We define two notions of energy for such a map $v$:
\begin{align}
E_{geo}(u) & = \int ||dv - \sum \alpha_{k} \otimes X_{G_k}||^{2} = \int v^{*}(\omega) - \sum v^{*}(dG_k) \wedge \alpha_k \\
E_{top}(u) & =  \int v^{*}(\omega) - d \left( \sum v^{*} G_k  \cdot  \alpha_k \right) = E_{geo} (v) -   \sum  \int  v^*G_k \cdot d \alpha_k. \label{eq:topological_and_geometric}
\end{align}
 
Note that the expression $G_k d \alpha_k$ gives a section on $M \times S$ of the pullback of the space volume forms on $S$.  As $S$ is oriented, it therefore makes sense to say that such a section is positive or negative.  With this in mind, the next result is an essentially minor generalisation of Lemma 7.2 of \cite{abouzaid-seidel}:
\begin{lem} \label{lem:stay_in_compact_set}
If at every point of $S$
\begin{equation} \label{eq:positivity_assumption}
 \sum G_{k} d \alpha_{k} + \sum   \Laction(x_i) + \sum   \Laction(y_i)    \leq 0   
\end{equation}
then every solution to Equation \eqref{eq:generalised_dbar} is a constant map whose image is contained in $\partial L_{W}$. In particular, the moduli space $\Moduli_{S}(\vx, \vy)$ is empty whenever there is at least one puncture.  
\end{lem}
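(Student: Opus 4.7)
The argument follows the standard action--energy template (cf.\ Lemma~7.2 of \cite{abouzaid-seidel}): the plan is to show $E_{geo}(v)=0$, which pins $v$ down as covariantly constant for the Hamiltonian connection $\sum X_{G_k}\otimes\alpha_k$, and then use the boundary conditions to conclude that $v$ is genuinely constant with image in $\partial L_W$.

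First I would apply Stokes' theorem to $E_{top}(v)=\int_S d\bigl(v^*\lambda - \sum v^*G_k\cdot\alpha_k\bigr)$ (using $\omega=d\lambda$). The boundary $\partial S$ decomposes into $\partial_l S$, $\partial_n S$, and small loops shrinking to the punctures. Along $\partial_l S$, the vanishing $\alpha_k|_{\partial_l\bar S}=0$ kills the $\sum v^*G_k\cdot\alpha_k$ piece, and the exactness $v^*\lambda=d(f_W\circ v)$ on $L_W$ kills all non-corner contributions of the remaining piece. Along $\partial_n S$, the image lies in $\partial W$, and the prescribed form of $G_k$ (a multiple of the radial coordinate near $\partial W$) combined with $\lambda=r(\lambda|\partial W)$ produces a controlled boundary integrand. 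Passing to the limit at each negative puncture converts the loop contribution, via \eqref{eq:action_chords}--\eqref{eq:action_orbits}, into $-\Laction(x_i)$ or $-\Laction(y_i)$. Combining with \eqref{eq:topological_and_geometric} yields an identity of the shape
\begin{equation*}
E_{geo}(v) \;=\; \sum_i \Laction(x_i) + \sum_i \Laction(y_i) + \int_S \sum_k v^*G_k \cdot d\alpha_k + (\text{term from }\partial_n S),
\end{equation*}
and the hypothesis \eqref{eq:positivity_assumption} is precisely the assertion, in the appropriate integrated sense, that this right-hand side is non-positive.

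Since $E_{geo}(v)\geq 0$ as an $L^2$ norm, I conclude $E_{geo}(v)=0$, i.e.\ $dv=\sum X_{G_k}\otimes\alpha_k$ everywhere on $S$. The vanishing $\alpha_k|_{\partial_l\bar S}=0$ then forces $dv\equiv 0$ on $\partial_l S$, so $v$ is constant on each arc of $\partial_l S$ with value in $L_W$; combined with $v(\partial_n S)\subset\partial W$ and connectedness of $S$, this constant must lie in $L_W\cap\partial W=\partial L_W$. The horizontality equation $dv=\sum X_{G_k}\otimes\alpha_k$, together with the fact that the vector fields $X_{G_k}$ are tangent to $\partial W$ and preserve $\partial L_W$ (both because of the prescribed asymptotic form of $G_k$ near $\partial W$), then propagates this value across $S$, so $v$ is globally constant with image in $\partial L_W$. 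In particular, a constant map cannot converge asymptotically to a non-constant chord or orbit, so $\Moduli_S(\vx,\vy)$ is empty whenever any puncture is present.

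The principal obstacle I anticipate is the careful bookkeeping of the $\partial_n S$ boundary term in Step~1: one must verify, using the specific asymptotic form of $G_k$ near $\partial W$ and the vanishing $f_W|_{\partial L_W}=0$, that this contribution has the right sign to be absorbed into \eqref{eq:positivity_assumption} (equivalently, into the sub-closedness conditions $d\gamma_S\leq 0$ that recur in Definitions \ref{def:floer_datum_discs_interior_puncture} and \ref{def:floer_datum_annnuli}). A secondary technical point is the rigorous justification of Stokes' theorem at the punctured ends, which rests on the standard exponential decay of Floer solutions to their asymptotic chords and orbits.
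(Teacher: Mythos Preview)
Your approach is essentially the paper's: compute $E_{top}(v)$ by Stokes, identify the puncture contributions as the actions, combine with \eqref{eq:topological_and_geometric} and the hypothesis to force $E_{geo}(v)\le 0$, and conclude. Your write-up of the final step (why $E_{geo}=0$ forces $v$ constant in $\partial L_W$) is in fact more detailed than the paper's, which simply says this is ``impossible if $v$ escapes from $\partial W$.''

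The one place your explanation is off is exactly the point you flagged as the principal obstacle. The sign of the $\partial_n S$ contribution does \emph{not} come from the form of $G_k$ near $\partial W$, nor from any sub-closedness condition like $d\gamma_S\le 0$; it comes from the contact-type hypothesis \eqref{eq:contact_type_complex} on $J_S$, namely $\lambda\circ J_S=dr$. The paper makes this explicit: after Stokes, the $\partial_n S$ term is rewritten (using the Cauchy--Riemann equation to replace $dv$ by its $J_S$-holomorphic part) as
\[
\int_{\partial_n S} (\lambda\circ J_S)\circ\Bigl(dv-\sum X_{G_k}\otimes\alpha_k\Bigr)\circ(-j),
\]
and the contact-type condition turns the integrand into $dr$ evaluated on an outward-pointing vector along $\partial W$, which is non-positive. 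This is the content of the reference to Lemma~7.2 of \cite{abouzaid-seidel}. Once you replace your tentative explanation with this mechanism, your argument is complete and matches the paper's.
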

\begin{proof}
This is a direct application of Stokes's theorem, which implies that
\begin{equation*}
E_{top}(v)  =   \int_{\partial_{n}S} (\lambda \circ J_{S}) \circ \left(dv - \sum  X_{G_k} \otimes  \alpha_{k} \right) \circ (- j)  + \sum   \Laction(x_i) +  \sum   \Laction(y_i) 
\end{equation*}
Since $J_{S}$ is assumed to be of contact type near $\partial S$, the first term is negative (see the proof of Lemma 7.2 in \cite{abouzaid-seidel}).  Using Equation \eqref{eq:topological_and_geometric} and the hypothesis \eqref{eq:positivity_assumption}, we conclude that $E_{geo}(v) \leq 0$, which is impossible if $v$ escapes from $\partial W$. 
\end{proof}
In order to apply this result to the various moduli spaces defined in the paper, we need the following action estimates:
\begin{lem}
  \label{lem:action_negative}
Every  orbit in $\Orbit$ lying away from a compact subset of $M$ has negative action.  In addition, if $L$ and $L'$ are objects of $\Wrap$, then every orbit in $\Chord(L,L')$ which lies away from a compact subset also has negative action.
\end{lem}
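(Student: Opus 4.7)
The plan is to compute the integrand in the action formulas \eqref{eq:action_chords} and \eqref{eq:action_orbits} explicitly on the cylindrical end $[1,+\infty)\times\partial M$, where both $H$ and $\lambda$ take a particularly simple form, and then observe that the $r^{2}$ growth coming from $H$ dominates every other contribution.

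First I would rewrite the action by substituting Floer's equation: since $\int_{0}^{1} y^{*}\lambda = \int_{0}^{1} \lambda(X_{H_{S^{1}}})(y(t))\,dt$, the action of an orbit becomes
\begin{equation*}
 \Laction(y) = \int_{0}^{1} \bigl( H_{S^{1}}(t,y(t)) - \lambda(X_{H_{S^{1}}})(y(t)) \bigr)\,dt,
\end{equation*}
and analogously for chords. On the cylindrical end one has $\lambda = r\,(\lambda|\partial M)$ and $H=r^{2}$, so $X_{H}$ is a positive multiple of the Reeb vector field of $\lambda|\partial M$ and a direct computation yields $\lambda(X_{H}) = 2r^{2}$, hence $H - \lambda(X_{H}) = -r^{2}$ on the cylindrical end.

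Next I would handle the two cases separately. For a chord $x \in \Chord(L,L')$ lying in the region $[R,+\infty)\times\partial M$ for $R$ sufficiently large, Condition \eqref{eq:boundary_Legendrian} forces the primitives $f_{L}$ and $f_{L'}$ to be locally constant on the cylindrical parts of $L$ and $L'$, so the boundary contribution $f_{L'}(x(1))-f_{L}(x(0))$ is bounded by a constant depending only on the finite collection $\Ob(\Wrap)$. Combining this with the identity above gives
\begin{equation*}
 \Laction(x) \;\le\; -\min_{t}r(x(t))^{2} + C \;\le\; -R^{2} + C,
\end{equation*}
which is negative for $R$ large. For an orbit $y\in\Orbit$, write $H_{S^{1}} = H + F$; Assumption \eqref{eq:support_condition_perturbation} provides a uniform bound on both $F$ and $\lambda(X_{F})$, so the contribution of $F$ to the action integrand is uniformly bounded by some constant $C'$, and the same estimate
\begin{equation*}
 \Laction(y) \;\le\; -\min_{t}r(y(t))^{2} + C'
\end{equation*}
applies, becoming negative whenever $y$ stays in $[R,+\infty)\times \partial M$ with $R^{2} > C'$.

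There is no real obstacle here; the only thing requiring minor care is the choice of sign conventions so that $\lambda(X_{H})=2r^{2}$ rather than $-2r^{2}$, and the observation that although orbits of $H_{S^{1}}$ need not sit at a fixed radius, the integrand bound is pointwise and therefore insensitive to radial drift. The conclusion then follows by choosing the ``compact subset'' in the statement of the lemma to be $\{r\le R\}$ for $R$ larger than $\sqrt{\max(C,C')}$.
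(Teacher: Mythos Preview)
Your proposal is correct and follows essentially the same route as the paper: both arguments reduce to the identity $H-\lambda(X_{H})=-r^{2}$ on the cylindrical end and then absorb the remaining bounded terms (the primitives $f_{L}$, $f_{L'}$ for chords, and $F$, $\lambda(X_{F})$ for orbits via Assumption~\eqref{eq:support_condition_perturbation}) into a constant dominated by $r^{2}$. You are in fact slightly more explicit than the paper about the boundary contribution $f_{L'}(x(1))-f_{L}(x(0))$ being uniformly bounded, which the paper leaves implicit.
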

\begin{proof}
The pullback of $\lambda$ by a Hamiltonian chord agrees with $\lambda(X_{H}) dt$.  Since $H$ is quadratic at infinity, $\lambda(X_H) = 2 r^{2}$ away from a compact subset, which implies that the integral in Equation\eqref{eq:action_chords} is negative if $r$ is sufficiently large.

To prove the analogous result for orbits, we compute that
\begin{align*}
  \cA(y) & = - \int y^{*}(\lambda)  +\int H_{S^1}(y,y(t)) dt \\
& =-  \int \left( \lambda(X_H) + \lambda(X_F) - H -F \right)  \circ y \cdot  dt \\
& = - \int (H - F + \lambda(X_F)) \circ y \cdot dt.
\end{align*}
By Assumption \eqref{eq:support_condition_perturbation}, $H$ dominates $ F$  and $ \lambda(X_F) $ away from a compact subset of $M$, which implies the desired result.
\end{proof}

\section{Moduli spaces and their orientations} \label{sec:abstr-moduli-space}
In this section, we give the necessary ingredients which determine our sign conventions.  We first construct the abstract moduli spaces (and their compactifications) which control the various maps appearing in the paper and fix our conventions for orienting them.  The last section explains how these choices (together with a spin structure on the Lagrangians) determine orientations on the moduli spaces of maps. 

The reader will find bellow listings for the codimension $1$ strata of various moduli spaces.  For those familiar with the operations parametrised by these spaces, it might be helpful to keep in mind the following convention:  the ordering in a decomposition of a stratum into products is listed in the reverse order of the composition of operations to which they correspond.

\subsection{Stasheff polyhedra controlling the $A_\infty$-structure} \label{sec:stash-polyh-infty}

We write $\Discbar_{d}$ for the compactified moduli space of abstract holomorphic discs with $d+1$ boundary punctures of which $d$ are equipped with positive (incoming) ends.  We shall often write a representative $S$ of a point in this moduli space as the complement of $d+1$ marked points $(\xi^0, \ldots, \xi^d)$ in the compactification $\overline{S}$ of $S$.  

Since $\Disc_{d}$ is the quotient of the configuration space of points on $\partial D^2$ by reparametrisations, we orient it by choosing a slice of this action where $(\xi^0, \xi^1, \xi^2)$ are fixed, and using the position of the remaining marked points as local coordinates in the order $(\xi^3, \ldots, \xi^d) $ and with orientation on each factor given by the counterclockwise orientation on the boundary of $D^2$.

\subsection{Stasheff polyhedra controlling the bimodule structure} \label{sec:stash-polyh-contr}
We define $\Disc_{r|1|s}$ to be a copy of the moduli space of discs with $r+s+3$ boundary punctures with the convention that we have equipped $2$ successive punctures $(\xi^{-1}, \xi^{0})$ with negative ends, and the remaining ends (ordered counterclockwise) are denoted $( \xi^{|s}, \ldots, \xi^{|1}, \underline{\xi}, \xi^{1}, \ldots, \xi^{r})  $. 
Each element of this moduli space has a unique representative as $D^2$ with marked points on the boundary in such a way that $(\xi^{-1}, \xi^{0}, \underline{\xi})$ are mapped to the three roots of unity.  By recording the position of the remaining points on the boundary of $D^2$, we obtain coordinates $(z_{|s}, \ldots, z_{|1}, z_{1}, \ldots, z_{r})$ on  $\Disc_{r|1|s}$ which we use to fix the orientation:
\begin{equation*}
  dz_{|s} \wedge \ldots \wedge dz_{|1} \wedge dz_1 \wedge \ldots \wedge dz_{r}.
\end{equation*}

The boundary strata of the Deligne Mumford compactification, which we denote $\Discbar_{r|1|s}$, arise from breakings into a disc with one input and an element of a moduli space of discs with 2 outputs. All such strata  can be obtained as the images of natural inclusions of the following products
\begin{align} 
 \label{eq:boundary_strata_stasheff_bimodules-1} \Discbar_{r-m+1} \times  \Discbar_{m|1|s}  &  \quad 0  \leq m  <  r  \\
\label{eq:boundary_strata_stasheff_bimodules-2} \Discbar_{s-\ell+1} \times \Discbar_{r|1|\ell}  &  \quad  0 \leq \ell <  s  \\
\label{eq:boundary_strata_stasheff_bimodules-3} \Discbar_{r-m|1|s-\ell} \times \Discbar_{\ell+m+1} &  \quad 0 \leq m \leq  r \textrm{ and } 0 \leq \ell \leq s \textrm{ and } (m,\ell) \neq (0,0) \\
\label{eq:boundary_strata_stasheff_bimodules-4} \Discbar_{r|1|s-\ell +k +1} \times \Discbar_{\ell-k} &  \quad 0 \leq  k \leq \ell \leq s \textrm{ and } \ell - k \geq 2 \\
\label{eq:boundary_strata_stasheff_bimodules-5} \Discbar_{r-m +k +1|1|s} \times \Discbar_{m-k} &  \quad 0 \leq k \leq m \leq r \textrm{ and } m - k \geq 2.
\end{align}
The first two moduli spaces correspond to one of the two outputs breaking off, while the last three involve only inputs being involved in the breaking, and are distinguished by  the position of the breaking relative the input $\underline{\xi}$.

\subsection{Discs with one interior puncture} \label{sec:discs-with-one}
We write $\Disc^{1}$ for the one element set corresponding to the unique isomorphism class of discs with one  interior puncture equipped with a positive end, and one boundary puncture equipped with a negative end.

In contrast, we write $\Disc_{d}^1$ for the moduli spaces of  discs with $d$ boundary punctures all of which are equipped with positive ends and $1$ interior puncture equipped with a negative end.  We order the punctures counterclockwise along the boundary, and trivialise $\Disc_{d}^{1}$ by fixing an isomorphism to $D^2$ taking the last boundary puncture to $1$ and the interior puncture to the origin,  and consider the induced orientation
\begin{equation}
 \label{eq:orientation_discs_1_marked_point}  - dz_1 \wedge \cdots \wedge dz_{d-1}
\end{equation}
where $z_1, \ldots, z_{d-1}$ are coordinates which record the positions of the remaining marked points on the boundary $S^1 \subset D^2$, oriented counterclockwise.

The Deligne-Mumford compactification of $\Disc_{d}^1$ can be made into a manifold with corners.  Its codimension $1$ strata are given by choosing a partition $d_1+ d_2 = d +1$ with $2 \leq d_2 \leq d$, and considering the $d$ possible choices of natural inclusions
\begin{equation}
\label{eq:boundary_discs_1_puncture}
\Disc_{d_1}^1 \times   \Disc_{d_2} \to \Discbar_{d}^1.
\end{equation}
These inclusions come in two types:
\begin{enumerate}
\item \label{item:bar_differential_degeneration} For $0 \leq  k  < d_1 -1$ one constructs a nodal disc by identifying the outgoing marked point of an element of $  \Disc_{d_2}  $ with the $k+1$\st marked point of an element of $  \Disc_{d_1}^1$.  The boundary marked points of the nodal discs are ordered in the unique way which is (1) compatible with the orientation of the boundary, and (2) such that the last marked point comes from the last marked point of $  \Disc_{d_1}^1$.     
\item \label{item:cyclind_differential_degeneration} For $0 \leq k \leq d_2-1 $  one constructs a nodal disc by identifying the outgoing marked point of an element of $  \Disc_{d_2}  $ with the last marked point of an element of $ \Disc_{d_1}^1$ and choosing the $k+1$\st marked point coming from  $  \Disc_{d_2}  $  to be the terminal element of the marked points on the nodal disc.
\end{enumerate}

\begin{rem}
If we were to develop a more complete theory of algebraic invariants on symplectic cohomology, we would have to consider moduli spaces of discs with $1$ interior puncture together with the datum of an asymptotic marker at this marked point.  However, it is sufficient for the purposes of this paper to fix a marker which ``points away'' from the last marked point on the boundary.  This idea is implemented in our fixed choice of cylindrical ends in Definition \ref{def:floer_datum_discs_interior_puncture}.
\end{rem}

\subsection{Annuli}\label{sec:annuli}
We write $\Ann_{d}$ for the abstract moduli space of holomorphic annuli with $d$ boundary punctures lying on one boundary component and labelled as incoming and a unique outgoing boundary puncture lying on the other component.  For each positive real number $r$, the domain
\begin{equation}
  \label{eq:explicit_annulus}
  \{ z \in \bC | 1 \leq |z| \leq r \}
\end{equation}
with the pair of marked points $\{ 1,-r \}$ gives an element of  $\Ann_{1}$.  We write  $\Ann^{-}_{1} $ for the real line consisting of such annuli, and 
\begin{equation}
  \label{eq:restricted_annuli}
  \Ann^{-}_{d} \subset \Ann_{d}
\end{equation}
for the inverse image of  $\Ann^{-}_{1} $  under the map $\Ann_{d} \to \Ann_1  $ which forgets all the incoming marked points but the last.  We fix an orientation for  $\Ann^{-}_{d}$ 
\begin{equation}
  \label{eq:orientation_moduli_annuli}
  dr \wedge dz_1 \wedge \cdots \wedge dz_{d-1}
\end{equation}
where $z_{j}$, as in Equation \eqref{eq:orientation_discs_1_marked_point}, is a coordinate recording the position of the $j$\th incoming marked point.

We have a Deligne-Mumford compactification $\Annbar^{-}_{d}$ which is a manifold with boundary.  To understand the structure at the boundary, we first note that  $\Annbar^{-}_{1}$ may be identified with the interval $[0,+\infty]$ by using the modular parameter $e^{r}$ as coordinate.  At $+\infty$, the annulus breaks into two discs each carrying both an interior and a boundary puncture; we shall label as outgoing the interior puncture on the disc carrying the incoming boundary marked point, so that this stratum may be identified with the product $\Disc^{1} \times \Disc_{1}^{1}$.

At the other boundary point of $\Annbar^{-}_{1}$, the annulus breaks into two discs meeting at two nodes.  By convention, the nodes give rise to two outgoing marked points on the disc carrying the incoming marked point which survives gluing, so that we may identify this boundary stratum with $\Disc_{0|1|0} \times \Disc_{2}$.

\begin{rem}
While the appearance of two boundary nodes is a codimension $2$ phenomenon in the Deligne-Mumford compactification of $\Ann_{d}$, the inclusion of $ \Ann^{-}_{1}$ in $\Annbar_{1}$ meets this codimension $2$ stratum cleanly but not transversely, so that the resulting stratum is of codimension $1$ in  $\Annbar^{-}_{d}$.
\end{rem}

Forgetting all but the last incoming marked point (and collapsing any unstable component) defines a map $\Annbar^{-}_{d} \to \Annbar^{-}_{1}$.  The fibre over $ \Disc^{1} \times \Disc_{1}^{1} $ is  a boundary stratum carrying a natural identification with
\begin{equation}
\label{eq:boundary_moduli_annuli_interior_breaking}
\Disc^{1} \times \Discbar_{d}^{1}  \end{equation}
while the fibre over $\Disc_{0|1|0} \times \Disc_{2}$ is a union over all non-negative integers $r$ and $s$ such that $r+s \leq d$ of the images of inclusion maps
\begin{equation}
\label{eq:boundary_moduli_annuli_coproduct_relation}
\Discbar_{d-r-s+1} \times \Discbar_{r|1|s} \to \partial \Annbar^{-}_{d}.
\end{equation}

The remaining codimension $1$ strata of  $\Annbar^{-}_{d}$ submerse over the interior of $\Annbar^{-}_{1}  $ and are obtained as the union over all integers $1 \leq d_1 < d$ of product
\begin{equation}
 \label{eq:boundary_moduli_annuli_boundary_bubbling}
\Annbar^{-}_{d_1} \times \Discbar_{d-d_1 +1}.
\end{equation}
More precisely, among the boundary strata of $\Annbar^{-}_{d}$  there are $d_1$ different ones each naturally identified with the above product, corresponding geometrically to forming a nodal curve attaching the outgoing marked point of a disc with $d-d_1+ 1$ marked points at any of the incoming marked points of an annulus.

\subsection{An auxiliary moduli space}\label{sec:an-auxiliary-moduli}
For each integer $d \geq 1$, let 
\begin{equation} \label{eq:disjoint_union_manifolds}
  \Product_{d} = [0, 1] \times \bigcup_{r+s+d_1 - 1 =  d} \Discbar_{d-r-s+1} \times \Discbar_{r|1|s} / \sim
\end{equation}
where  $\sim$ is the equivalence relation which identifies points which have the same image under the maps  \eqref{eq:boundary_moduli_annuli_coproduct_relation}.  For an explicit description, note that  for each integer $\ell \leq s$,   we have a boundary stratum of $ \Discbar_{d-r-s+1} \times \Discbar_{r|1|s} $  which is a triple product
\begin{equation}
 \Discbar_{d-r-s+1} \times \Discbar_{s-\ell+1}  \times  \Discbar_{r|1|\ell}.
\end{equation}
coming from the product of the first factor with the boundary stratum of $\Discbar_{r|1|s} $ given in Equation \eqref{eq:boundary_strata_stasheff_bimodules-2}.  Note that this stratum may also be naturally identified with a boundary stratum of the product $ \Discbar_{d-r-\ell+1} \times \Discbar_{r|1|\ell}$.  Similarly,  $  \Discbar_{d-r-s+1} \times \Discbar_{r|1|s} $  and  $  \Discbar_{d-k-s+1} \times \Discbar_{k|1|s} $ have boundary strata which may be naturally identified.    These identifications generate the equivalence relation $\sim$.

We write
\begin{equation}
  \label{eq:boundary_interval_strata}
  \Product_{d}^{0} \textrm{ and }   \Product_{d}^{1} 
\end{equation}
for the boundary strata of $ \Product_{d}$  coming from the boundary of the interval $[0,1]$.  Their complement in  the boundary of $ \Product_{d}$  is covered by the images of natural inclusions
\begin{equation}
  \label{eq:boundary_product_disc_bubbling}
  \Product_{d_1} \times \Discbar_{d - d_1 +1} \to \partial  \Product_{d}
\end{equation}
for every integer $1 \leq d_1 <  d $ and $1 \leq k \leq d_1 $ obtained by attaching a disc with $d-d_1+1 $ inputs at the $k$\th input of an element $\Product_{d_1}$.

\subsection{Orienting the moduli spaces of maps}
Given $y \in \Orbit$, there is a unique homotopy class of trivialisations of the pullback of $TM$ to $S^1$ which is compatible with the chosen trivialisation of $\Lambda^{n}_{\bC} T^{*}M$.  Given a cylindrical end $[1,+\infty) \times S^1$,  the linearisation of Equation \eqref{eq:dbar_strip} with respect to such a trivialisation exponentially converges at the end to an operator of the form
\begin{equation*} Y \mapsto \partial_{s} Y - J_{t} \partial_{t} Y -   A(+\infty, t) Y   \end{equation*}
such that $  J_{t} \partial_{t} -   A(+\infty, t)  $ is self-adjoint (see \cite{floer-hofer}).

Let us therefore consider the plane $\bC$ equipped with a negative cylindrical end
\begin{align*}
  \epsilon \co Z_{-}  & \to \bC  \\
(s,t) & \mapsto e^{-s+ 2 \pi \sqrt{-1} t}
\end{align*}
and extensions of $J_t$ and $A(-\infty, t) $ to a family of (linear) complex structures on $\bC^{n}$ and endomorphisms of $\bC^{n}$ parametrised by $\bC$.  In particular, we obtain an operator
\begin{equation}
D_{y} \co W^{1,2}(\bC, \bC^{n}) \to L^{2} (\bC, \bC^{n}) .
\end{equation}

\begin{defin} \label{def:orientation}
The \emph{orientation line}  $o_{y}$ is the determinant line $\det(D_y)$.  
\end{defin}

For a time-$1$ chord $x$ with endpoints on graded Lagrangian $L_i$ and $L_j$, there is an analogously defined orientation line $o_{x}$.  The reader is referred to Section (11l) of \cite{seidel-book} for the definitions.

There is a general theory, starting with \cite{floer-hofer}, studied in the Lagrangian case in \cite{FOOO}, and phrased in \cite{seidel-book} using precisely the language adopted here, for relating such orientation lines with orientations of moduli spaces of punctured holomorphic curves with asymptotic conditions converging to the selected  Hamiltonian chords and orbits.  Consider a moduli space $\Moduli$ of compact Riemann surfaces with boundary $\overline{S}$ with  $i_- + i_+$ interior marked point and $b_- + b_+$ boundary marked points:  $\pm$ represents the respective numbers of marked points equipped with positive or negative ends.  Let $S$ denote the complement of the marked points, and fix a smooth map from $\partial S$ to the space of exact graded Lagrangians on $M$ which is locally constant near the ends.  In particular, we have two graded Lagrangians associated to each boundary marked point in $\overline{S}$  coming from the two local components of $\partial S$. 

Let $\{ H_{k}^{-} \}_{k=1}^{i_-+b_-} $ and  $\{ H_{k}^{+} \}_{k=1}^{i_+ + b_+} $ be periodic time-dependent  Hamiltonians, and consider collections $\vy[+] = \{ y_i^{+} \}_{i=1}^{i_+}$  and  $ \vy[-] = \{ y_i^{-} \}_{i=1}^{i_-}  $ of time-$1$ orbits for the Hamiltonians $H^{+}_{i}$ and $H_{i}^{-}$  as well as time-$1$ orbits $\vx[+]=\{ x_i^{+} \}_{i=i_+ + 1}^{b_+}$ and  $ \vx[-] =  \{ x_i^{-} \}_{i=i_-+1}^{b_-}  $ with endpoints on the two Lagrangians associated to each boundary marked point.

Choosing a family $I_{S}$ of almost complex structures on $M$ parametrised by points in $S$, we define $\Moduli( \vx[-], \vy[-];  \vx[+] , \vy[+])   $  to be the moduli space of maps with appropriate Lagrangian boundary conditions and asymptotic limits, which solve a family of equations
\begin{equation} \dbar_{S} \equiv \left(du - Y_S\right)^{0,1} = 0\end{equation}
for each surface $S \in \Moduli$ where $Y_S$ is a $1$-form on $S$ valued in the space of Hamiltonian vector fields on $M$ whose pullback under an appropriate strip-like end near the punctures agrees with $X_{H_i^{-}} \otimes dt$ near the negative punctures and $X_{H_i^+} \otimes dt$  near the positive ones.  

The main result we need is:
\begin{lem} \label{lem:guts_orientations}
Let $\bar{C}_1, \ldots, \bar{C}_k$ be the components of the boundary of $\bar{S}$, and $e_j$ denote the number of negative ends of $\bar{C}_{j}$.  If we fix a marked point  $z_j \in C_j $ mapping to a Lagrangian $L_j$ then, assuming the moduli space  $\Moduli( \vx, \vy;  \vx[+] , \vy[+])   $   is regular at a point $u$, we have a canonical isomorphism
\begin{equation*} \lambda \left( \Moduli( \vx[-], \vy[-]; \vx[+], \vy[+] ) \right) \cong  \lambda( \Moduli ) \otimes  \bigotimes_{j}  \left(  \lambda \left(T|_{u(z_j)} L_j \right)^{\otimes 1-e_j}  \otimes  \bigotimes_{x_i^{\pm} \in \bar{C}_{j} } o_{x_i^{\pm}}^{\mp}   \right)   \otimes \bigotimes_{y_i^{\pm}} o_{y_i^{\pm}}^{\mp}    \end{equation*}
where $\lambda$ stands for the top exterior power of the tangent space.
\end{lem}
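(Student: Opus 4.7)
The plan is to run a standard linear gluing argument. First I would identify $\lambda(\Moduli(\vx[-], \vy[-]; \vx[+], \vy[+]))$ with $\lambda(\Moduli) \otimes \det(D_u)$, where $D_u$ is the linearisation at $u$ of the perturbed Cauchy--Riemann equation acting on sections of $u^{*}TM$ with the totally real boundary conditions prescribed by the labels of $\partial S$ and with exponential decay to the asymptotic operators at the ends. The regularity hypothesis on $u$ produces a short exact sequence
\begin{equation*}
0 \to \ker D_u \to T_{u} \Moduli(\vx[-], \vy[-]; \vx[+], \vy[+]) \to T_S \Moduli \to 0,
\end{equation*}
from which the identification of determinant lines follows.

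Next I would cap off every puncture of $S$ using the standard operators that \emph{define} the orientation lines. For an interior puncture at $y$, recall from Definition~\ref{def:orientation} that $o_y = \det(D_y)$, with $D_y$ an operator on $\bC$ with a negative cylindrical end converging to $y$. If the end of $u$ at $y$ is negative, one glues $D_y$ directly and linear gluing gives $\det(D_u) \otimes o_y \cong \det(D_{u \# y})$, which rearranges to a factor $o_y = o_y^{+}$ on the side of $\det(D_u)$. If the end at $y$ is positive, one instead glues in the dual cap (a plane with a positive cylindrical end), whose determinant is $o_y^{\vee}$, producing the factor $o_y^{-}$. Boundary punctures are handled identically using the half-plane operators of Section~(11l) of \cite{seidel-book} and the lines $o_x$. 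After capping every end one obtains an operator $D_{\hat u}$ on a compact Riemann surface $\hat S$ with boundary, and the collected capping factors assemble into $\bigotimes_i o_{y_i^\pm}^{\mp} \otimes \bigotimes_i o_{x_i^\pm}^{\mp}$ as in the statement.

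It then remains to analyse $\det(D_{\hat u})$ on the compact capped surface. Since $\hat S$ has genus zero in all cases of interest (discs, punctured discs, and annuli) and its boundary is a disjoint union of circles $\hat C_j$ labelled by graded Lagrangians interpolating continuously between the labels of $\bar C_j$ and the cap pieces, a further deformation pinching $\hat S$ along separating arcs makes $D_{\hat u}$ homotopic to a direct sum of $\dbar$-operators on topological discs, one for each $\hat C_j$, with totally real boundary. Each of these determinant lines is canonically isomorphic to the top exterior power $\lambda(T_{u(z_j)} L_j)$ of the tangent space at the chosen basepoint, by evaluation of the kernel of constants.

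The main obstacle I anticipate is the bookkeeping that produces the exponent $1-e_j$, together with the overall sign. Each negative boundary cap at a chord $x \in \bar C_j$ is a half-plane whose determinant $o_x$ already hides one copy of $\lambda(T L_j)$: in the degenerate model with $x$ a constant chord and both Lagrangian labels equal to $L_j$, one computes directly that $o_x \cong \lambda(T L_j)$. Tracking this through the capping lowers the net power of $\lambda(T L_j)$ coming from $\hat C_j$ by one for each negative end on that component, giving the exponent $1-e_j$ in the formula. Verifying the final formula, including the signs, reduces by deformation invariance of determinant lines and multiplicativity under linear gluing to explicit hand calculations on a short list of model surfaces: a closed disc on $L_j$, a half-plane, a strip, and a plane.
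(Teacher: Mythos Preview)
Your overall strategy is the same as the paper's---reduce everything to a determinant-line statement for the linearised operator, then cap punctures using the very operators that define the lines $o_y$ and $o_x$, and appeal to multiplicativity under linear gluing.  The paper's proof is more economical: it caps only the \emph{interior} punctures with the operators $D_y$ and then directly invokes Proposition~11.13 of \cite{seidel-book}, which already packages the boundary-puncture capping and the bookkeeping producing the $\lambda(TL_j)^{1-e_j}$ factors.  You are essentially redoing the content of that proposition by hand (capping the boundary ends and degenerating the capped surface), which is fine but costs you several paragraphs that the paper avoids.

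One genuine technical slip: the short exact sequence you write down,
\[
0 \to \ker D_u \to T_u\Moduli(\vx[-],\vy[-];\vx[+],\vy[+]) \to T_S\Moduli \to 0,
\]
does not follow from regularity of the \emph{parametrised} problem.  Regularity says the full linearisation (allowing domain variations) is surjective; it does not force the fixed-domain operator $D_u$ to be surjective, so $\ker D_u$ need not sit inside the tangent space as claimed.  The conclusion you want, $\lambda(\Moduli(\ldots))\cong\lambda(\Moduli)\otimes\det(D_u)$, is nevertheless correct, but it comes from multiplicativity of determinant lines in the short exact sequence of complexes (or equivalently the family index argument), not from the kernel sequence you wrote.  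A second place to be more careful is your ``pinching $\hat S$ along separating arcs to a direct sum of discs, one per $\hat C_j$'': for surfaces with several boundary components (the annulus is already an example) this is not literally a decomposition along boundary arcs, and you should instead degenerate along interior circles or simply cite Seidel's proposition, which is what the paper does.
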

\begin{proof}
This  a minor generalisation of Proposition 11.13 in \cite{seidel-book} which is stated in the absence of interior punctures, and for a moduli space of solutions for a fixed conformal structure.  Writing $\tilde{S}$ for the surface obtained by filling  interior punctures, we obtained an operator $D_{\dbar_{\tilde{S}}}$ by gluing the linearisation $D_{\dbar_{S}}$ of $\dbar_{S}$ to the various operators $D_{y}$.  The gluing theorem implies that we have an isomorphism
\begin{equation*} \det( D_{ \dbar_{\tilde{S}}}) \cong  \det( D_{\dbar_{S}})  \otimes  \bigotimes_{y_i^{\pm}} o_{y_i^{\pm}}^{\pm}  . \end{equation*}
The result now follows directly by applying Proposition 11.13 in \cite{seidel-book}.
\end{proof}

Applying this result to the various moduli spaces appearing in this paper, and using the Koszul rules to reorder factors at our convenience, we obtain:

\begin{cor}
Given a pair of orbits  $y_0, y_1 \in \Orbit$, a sequence $\vx = ( x_1,  \ldots, x_d  )$ of chords with $x_k$ starting at $L_{k-1}$ and ending at $L_{k}$,  and a chord  $x_0$ with endpoints on a Lagrangian $K$,  there are canonical up to homotopy isomorphisms
\begin{align}
\label{eq:orientation_differential_SH} \lambda( \Cylt(y_0;y_1) )  \otimes  o_{y_1} & \cong  o_{y_0} \\
\label{eq:orientation_differential_HW}  \lambda( \Disct(x_0;x_1) )  \otimes  o_{x_1} & \cong  o_{x_0} \\ 
\label{eq:orientation_SH_to_HW}  \lambda( \Disc^{1}(x_0;y_1) )  \otimes  o_{y_1}  & \cong  o_{x_0}   \\
\label{eq:orientation_higher_product}\lambda( \Disc_{d}(x_0;\vx) )  \otimes   o_{x_d}  \otimes \cdots \otimes o_{x_2} \otimes o_{x_1}  & \cong  \lambda(\Disc_{d}) \otimes o_{x_0} \\
\label{eq:orientation_HH_to_SH}  \lambda( \Disc^1_{d}(y_0;\vx) )   \otimes    o_{x_d}  \otimes \cdots  \otimes o_{x_2} \otimes o_{x_1}  \otimes  \lambda^{-1}(L_0)  & \cong  \lambda(\Disc_{d}^1)\otimes o_{y_0}  \\
\label{eq:orientation_annuli} \lambda( \Ann_{d}(x_0;\vx) )   \otimes o_{x_d} \otimes \cdots \otimes o_{x_2}  \otimes o_{x_1}   \otimes  \lambda^{-1}(L_0)    & \cong   \lambda(\Ann_{d}) \otimes  o_{x_0} 
\end{align}
In addition, if $x_{-1}$,  $ x_0$, and $\underline{x}$ are respectively chords in $\Chord(K, L_r) $,  $\Chord(L_{|s}, K) $, and  $\Chord(L_{|0}, L_0) $, while  $\vx = ( x_{1},  \ldots, x_r )$ and  $\vx[|] = (x_{|s}, \ldots, x_{|1} )$ are sequences of chords respectively in $ \Chord(L_{|k+1}, L_{|k}) $  and $ \Chord(L_{k}, L_{k+1}) $, we have an isomorphism
\begin{multline}
   \lambda( \Disc_{r|1|s}(x_{-1},x_0;\vx[|], \underline{x}, \vx[r])  \otimes o_{x_{r}} \otimes \cdots \otimes  o_{x_{1}} \otimes o_{\underline{x}} \otimes o_{x_{|1}} \otimes \cdots \otimes o_{x_{|s}} \otimes \lambda(L_0) \\   \cong  \lambda(\Disc_{r|1|s}) \otimes  o_{x_{-1}} \otimes  o_{x_{0}}  \label{eq:orientation_coproduct} 
\end{multline}\qed
\end{cor}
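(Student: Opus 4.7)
The plan is to derive every listed isomorphism as an immediate consequence of Lemma \ref{lem:guts_orientations}, case by case, by reading off for each moduli space the combinatorial data that enters the lemma: the boundary components $\bar{C}_1,\ldots,\bar{C}_k$, the count $e_j$ of negative boundary ends on each $\bar{C}_j$, and the placement of interior punctures.  I will then rearrange the resulting tensor product via Koszul signs, using $o^{\vee}\otimes o\cong\mathbb{R}$ to move positive-end duals across the isomorphism and reverse input orderings when needed.

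First I would dispatch the cases in which every boundary component satisfies $e_j=1$, so that $1-e_j=0$ and all Lagrangian factors disappear.  This handles $\Disct(x_0;x_1)$ (two boundary components, each carrying the single negative end $x_0$), $\Disc_d(x_0;\vx)$ (one boundary with one output), and $\Disc^1(x_0;y_1)$ (one boundary with one output, plus an interior positive puncture contributing $o_{y_1}^{\vee}$).  In each case the lemma directly gives $\lambda(\Moduli_{\mathrm{abs}})\otimes o_{x_0}\otimes\bigotimes_i o_{x_i}^{\vee}$, possibly together with an interior factor; since the abstract moduli space is a point for strips and for $\Disc^1$ and equals $\lambda(\Disc_d)$ in the $A_\infty$ case, the claimed isomorphisms \eqref{eq:orientation_differential_HW}, \eqref{eq:orientation_higher_product}, \eqref{eq:orientation_SH_to_HW} follow after rearrangement.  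The closed-string statement \eqref{eq:orientation_differential_SH} is even simpler because $\Cylt$ has no boundary at all.

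Next I would treat the cases where exactly one boundary component has $e_j=0$, each producing a single factor of $\lambda(L_0)$ on the right.  For $\Disc_d^1(y_0;\vx)$ the sole boundary component contains no negative ends — the unique output is the interior puncture $y_0$ — so the lemma delivers $\lambda(\Disc_d^1)\otimes\lambda(L_0)\otimes o_{y_0}\otimes\bigotimes_i o_{x_i}^{\vee}$; moving $\lambda(L_0)$ to the left as $\lambda^{-1}(L_0)$ and reversing the input order yields \eqref{eq:orientation_HH_to_SH}.  For $\Ann_d(x_0;\vx)$ the two boundary circles split into one with $e=0$ (containing all inputs, contributing $\lambda(L_0)$) and one with $e=1$ (containing $x_0$, contributing trivially), and the same rearrangement produces \eqref{eq:orientation_annuli}.

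The only genuinely new combinatorial situation is $\Disc_{r|1|s}$, where both outputs $x_{-1}$ and $x_0$ lie on the single boundary component, so $e_1=2$.  The lemma now contributes $\lambda(L_0)^{1-2}=\lambda(L_0)^{-1}$, which explains the $\lambda(L_0)$ that must be tensored onto the input side in \eqref{eq:orientation_coproduct}; the rest follows by reordering factors.  The main obstacle throughout is purely bookkeeping: ensuring that the order in which Lemma \ref{lem:guts_orientations} lists the duals $o_{x_i}^{\vee}$ on the right-hand side matches the ordering prescribed in the corollary.  Because the gluing isomorphism in the proof of Lemma \ref{lem:guts_orientations} is only canonical up to homotopy, different admissible orderings differ by Koszul signs and by choices of trivialisation of the operators $D_y$, which is exactly why the corollary asserts only a \emph{homotopy class} of isomorphisms; the orientation conventions fixed in Appendix \ref{sec:abstr-moduli-space} pin down a preferred ordering and resolve the remaining ambiguity.
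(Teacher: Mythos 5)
Your proposal is correct and is essentially the paper's own argument: the Corollary is obtained precisely by applying Lemma \ref{lem:guts_orientations} to each moduli space, reading off the factors $\lambda(L_j)^{1-e_j}$ from the count of negative boundary ends (and the interior-puncture lines), and then reordering by the Koszul rules, which is exactly your case-by-case bookkeeping. Your description of the strip as having ``two boundary components, each carrying the negative end'' is slightly off (the compactified strip is a disc with a single boundary circle carrying one negative end), but since the exponent $1-e_j$ vanishes in either accounting this does not affect the conclusion.
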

\begin{rem} \label{rem:trivialise_R_action}
To obtain operations from Equations \eqref{eq:orientation_differential_SH} and \eqref{eq:orientation_differential_HW}, one must in addition choose a trivialisation of the $\bR$ action on the moduli spaces $\Cylt (y_0;y_1)$ and $ \Disct(x_0;x_1)$.  In both cases, we choose $\partial_{s}$ as the vector field inducing the trivialisation. 
\end{rem}

\begin{bibdiv}
\begin{biblist}

\bib{abouzaid-seidel}{article}{
   author={Abouzaid, Mohammed},
   author={Seidel, Paul},
   title={An open string analogue of Viterbo functoriality},
   journal={Geom. Topol.},
   volume={14},
   date={2010},
   number={2},
   pages={627--718},
   issn={1465-3060},
   review={\MR{2602848}},
   doi={10.2140/gt.2010.14.627},
}

\bib{abouzaid-cotangent}{article}{
  author = {Mohammed Abouzaid},
  title = {A cotangent fibre generates the Fukaya category},
  eprint = {arXiv:1003.4449},
}

\bib{abouzaid-htpy}{article}{
  author = {Mohammed Abouzaid},
  title = {Maslov 0 nearby Lagrangians are homotopy equivalent},
  eprint = {arXiv:1005.0358},
}

\bib{beilinson}{article}{
   author={Be{\u\i}linson, A. A.},
   title={Coherent sheaves on ${\bf P}^{n}$ and problems in linear
   algebra},
   language={Russian},
   journal={Funktsional. Anal. i Prilozhen.},
   volume={12},
   date={1978},
   number={3},
   pages={68--69},
   issn={0374-1990},
   review={\MR{509388 (80c:14010b)}},
}

\bib{BEE}{article}{
  author = {Frederic Bourgeois and Tobias Ekholm and Yakov Eliashberg},
  title = {Effect of Legendrian Surgery},
  eprint = {arXiv:0911.0026},
}

\bib{costello}{article}{
   author={Costello, Kevin},
   title={Topological conformal field theories and Calabi-Yau categories},
   journal={Adv. Math.},
   volume={210},
   date={2007},
   number={1},
   pages={165--214},
   issn={0001-8708},
   review={\MR{2298823 (2008f:14071)}},
   doi={10.1016/j.aim.2006.06.004},
}

\bib{floer}{article}{
   author={Floer, Andreas},
   title={Morse theory for Lagrangian intersections},
   journal={J. Differential Geom.},
   volume={28},
   date={1988},
   number={3},
   pages={513--547},
   issn={0022-040X},
   review={\MR{965228 (90f:58058)}},
}

\bib{floer-hofer}{article}{
   author={Floer, A.},
   author={Hofer, H.},
   title={Coherent orientations for periodic orbit problems in symplectic
   geometry},
   journal={Math. Z.},
   volume={212},
   date={1993},
   number={1},
   pages={13--38},
   issn={0025-5874},
   review={\MR{1200162 (94m:58036)}},
   doi={10.1007/BF02571639},
}

\bib{FHS}{article}{
    author={Floer, Andreas},
    author={Hofer, Helmut},
    author={Salamon, Dietmar},
     title={Transversality in elliptic Morse theory for the symplectic
            action},
   journal={Duke Math. J.},
    volume={80},
      date={1995},
    number={1},
     pages={251\ndash 292},
      issn={0012-7094},
    review={MR1360618 (96h:58024)},
}

\bib{FOOO}{book}{
   author={Fukaya, Kenji},
   author={Oh, Yong-Geun},
   author={Ohta, Hiroshi},
   author={Ono, Kaoru},
   title={Lagrangian intersection Floer theory: anomaly and obstruction.
   Part I},
   series={AMS/IP Studies in Advanced Mathematics},
   volume={46},
   publisher={American Mathematical Society},
   place={Providence, RI},
   date={2009},
   pages={xii+396},
   isbn={978-0-8218-4836-4},
   review={\MR{2553465}},
}

\bib{FSS}{article}{   
author={Fukaya, Kenji},
   author={Seidel, Paul},
   author={Smith, Ivan},
title={The Symplectic Geometry of Cotangent Bundles from a Categorical Viewpoint},
book={ series={	Lecture Notes in Physics }, 
volume =  {  757},
publisher = {Springer},
place= {  Berlin / Heidelberg}, },
date = {2009},
pages ={1--26},
}

\bib{KS}{article}{
   author={Kontsevich, M.},
   author={Soibelman, Y.},
   title={Notes on $A_\infty$-algebras, $A_\infty$-categories and
   non-commutative geometry},
   conference={
      title={Homological mirror symmetry},
   },
   book={
      series={Lecture Notes in Phys.},
      volume={757},
      publisher={Springer},
      place={Berlin},
   },
   date={2009},
   pages={153--219},
   review={\MR{2596638}},
}

\bib{MWW}{article}{ 
author =  {S. Mau},
author = {K. Wehrheim}, 
author={C. Woodward},
title={$A_{\infty}$ functors for Lagrangian correspondences},
status={In preparation},
 date ={2010},
}

\bib{maydanskiy-seidel}{article}{ 
   author={Maydanskiy, Maksim},
   author={Seidel, Paul},
   title={Lefschetz fibrations and exotic symplectic structures on cotangent
   bundles of spheres},
   journal={J. Topol.},
   volume={3},
   date={2010},
   number={1},
   pages={157--180},
   issn={1753-8416},
   review={\MR{2608480}},
   doi={10.1112/jtopol/jtq003},
}

\bib{seidel-GL}{article}{
   author={Seidel, Paul},
   title={Graded Lagrangian submanifolds},
   language={English, with English and French summaries},
   journal={Bull. Soc. Math. France},
   volume={128},
   date={2000},
   number={1},
   pages={103--149},
   issn={0037-9484},
   review={\MR{1765826 (2001c:53114)}},
}

\bib{seidel-bimodules}{article}{
   author={Seidel, Paul},
   title={$A_\infty$-subalgebras and natural transformations},
   journal={Homology, Homotopy Appl.},
   volume={10},
   date={2008},
   number={2},
   pages={83--114},
   issn={1532-0073},
   review={\MR{2426130}},
}

\bib{seidel-book}{book}{
   author={Seidel, Paul},
   title={Fukaya categories and Picard-Lefschetz theory},
   series={Zurich Lectures in Advanced Mathematics},
   publisher={European Mathematical Society (EMS), Z\"urich},
   date={2008},
   pages={viii+326},
   isbn={978-3-03719-063-0},
   review={\MR{2441780}},
}

\bib{viterbo}{article}{
      author={Viterbo, C.},
       title={Functors and computations in {F}loer homology with applications,
  {P}art {I}},
        date={1999},
     journal={Geom. Funct. Anal.},
      volume={9},
       pages={985\ndash 1033},
}

\end{biblist}
\end{bibdiv}

\end{document}